\newtheorem{theorem}{Theorem}[section]
\newtheorem{corollary}[theorem]{Corollary}
\newtheorem{lemma}[theorem]{Lemma}
\newtheorem{proposition}[theorem]{Proposition}
\theoremstyle{definition}
\newtheorem{definition}[theorem]{Definition}
\newtheorem{remark}[theorem]{Remark}
\newtheorem*{theorem*}{Theorem}
\def\Xint#1{\mathchoice
	{\XXint\displaystyle\textstyle{#1}}%
	{\XXint\textstyle\scriptstyle{#1}}%
	{\XXint\scriptstyle\scriptscriptstyle{#1}}%
	{\XXint\scriptscriptstyle\scriptscriptstyle{#1}}%
	\!\int}
\def\XXint#1#2#3{{\setbox0=\hbox{$#1{#2#3}{\int}$ }
		\vcenter{\hbox{$#2#3$ }}\kern-.57\wd0}}
\def\dashint{\Xint-}
\newcommand{\dx}{\,\mathrm{d}x}
\newcommand{\dP}{\,\mathrm{d}\mathbb{P}}
\newcommand{\e}{\varepsilon}
\newcommand{\dist}{{\rm{dist}}}
\newcommand{\Lw}{\mathcal{L}(\omega)}
\newcommand{\Nw}{\mathcal{N}(\omega)}
\newcommand{\R}{\mathbb{R}}
\newcommand{\Q}{\mathbb{Q}}
\newcommand{\N}{\mathbb{N}}
\newcommand{\Z}{\mathbb{Z}}
\newcommand{\w}{\omega}
\newcommand{\Ard}{\mathcal{A}^R(D)}
\newcommand{\Areg}{\mathcal{A}^R}
\newcommand{\Hd}{\mathcal{H}^{d-1}}
\newcommand{\Hdtwo}{\mathcal{H}^{d-2}}
\newcommand{\dHd}{\ensuremath{\,\mathrm{d}\Hd}}
\newcommand{\I}{\ensuremath{\mathcal{I}}}
\newcommand{\wcont}{\subset\subset}
\newcommand{\eg}{{\it e.g.}, }
\newcommand{\ie}{{\it i.e.}, }
\definecolor{mygreen}{RGB}{0,150,0}
\begin{document}
	
\author{Annika Bach}
\address[Annika Bach]{Zentrum Mathematik, Technische Universit\"at M\"unchen, Boltzmannstra{\ss}e 3, 85748 Garching bei M\"unchen, Germany}
\email{annika.bach@ma.tum.de}	

\author{Marco Cicalese}
\address[Marco Cicalese]{Zentrum Mathematik, Technische Universit\"at M\"unchen, Boltzmannstra{\ss}e 3, 85748 Garching bei M\"unchen, Germany}
\email{cicalese@ma.tum.de}

\author{Matthias Ruf}
\address[Matthias Ruf]{EPFL SB MATH, Ecole polytechnique fédérale de Lausanne, Station 8, 1015 Lausanne, Switzerland}
\email{matthias.ruf@epfl.ch}

\title[Random discretizations of the Ambrosio-Tortorelli functional]{Random finite-difference discretizations of the Ambrosio-Tortorelli functional with optimal mesh size}

\begin{abstract}
We propose and analyze a finite-difference discretization of the Ambrosio-Tortorelli functional. It is known that if the discretization is made with respect to an underlying periodic lattice of spacing $\delta$, the discretized functionals $\Gamma$-converge to the Mumford-Shah functional only if $\delta\ll\e$, $\e$ being the elliptic approximation parameter of the Ambrosio-Tortorelli functional. Discretizing with respect to stationary, ergodic and isotropic random lattices we prove this $\Gamma$-convergence result also for $\delta\sim\e$, a regime at which the discretization with respect to a periodic lattice converges instead to an anisotropic version of the Mumford-Shah functional. Moreover, we show that this scaling is optimal in the sense that it is the largest possible discretization scale for which the $\Gamma$-limit is of Mumford-Shah type. Finally, we present some numerical results highlighting the isotropic behavior of our random discrete functionals.
\end{abstract}

\subjclass[2010]{49M25, 68U10, 49J55, 49J45}
\keywords{Ambrosio-Tortorelli functional, random discretization, $\Gamma$-convergence, homogenization}

\maketitle


\section{Introduction}
The minimization of the Mumford-Shah functional has been introduced in the framework of image analysis as a simple and yet powerful variational method for image-segmentation problems (see, \eg\cite{AuKo,MSreview,MoSo,VeCha}. In this field, a main task consists in detecting relevant object contours of (possibly distorted) digital images. Representing a gray-scale image on a domain $D\subset \R^{d}$ as a function $g:D\to[0,1]$ encoding at each point of $D$ the gray-level of the image, a ``cartoon" version of $g$ is obtained by minimizing in the pair $(u,K)$ the functional 
\begin{equation}
\int_{D\setminus K}|\nabla u|^{2}\dx+\beta\,\mathcal{H}^{d-1}(K)+\gamma\int_{D}|u-g|^2\dx.
\end{equation}
In this setting $K\subset D$ is a piece-wise regular and relatively closed set with finite $(d-1)$-dimensional Hausdorff measure $\mathcal{H}^{d-1}$, the function $u$ belongs to $C^{1}(D\setminus K)$ and $\beta$ and $\gamma$ are nonnegative parameters. Loosely speaking, the minimization of the above functional results in a pair $(u,K)$ where $u$ is smooth and close to the input image $g$ outside a set $K$ whose $\Hd$-measure has to be as small as possible. In this sense $K$ may be interpreted as the set of contours of the ``cartoon'' image $u$, or in other words the set of relevant object contours of $g$. Besides being a simple model for image segmentation (in this case the relevant space dimension is $d=2$), the Mumford-Shah functional has applications also in higher dimensions. The case $d=3$ is particularly important for its mechanical interpretation, as the functional coincides with the Griffith's fracture energy in the anti-plane case (see \cite{BFM}).

A weaker formulation of the problem was proposed in \cite{AmDeG} and led to the introduction of the space $SBV$ of special functions of bounded variation on which the Mumford-Shah functional is defined as
\begin{equation}\label{eq:introMS}
M\!S(u)=\int_{D}|\nabla u|^2\dx+\beta\,\mathcal{H}^{d-1}(S_u)+\gamma\int_{D}|u-g|^2\dx.
\end{equation}
In this new setting the functional depends only on the function $u$, and the role of $K$ is now played by $S_{u}$ the set of discontinuity points of $u$, so that a solution of the original problem can be obtained by proving regularity of the pair $(u,K)$, where $u$ is a minimizer of $MS$ and $K=\overline{ S_{u}}$ (see \cite{Fo} for a recent review on this research direction). The Mumford-Shah functional belongs to the family of so-called free-discontinuity functionals, whose variational analysis has been initiated in \cite{Amb} and it is the object of many papers in the last decades (see, \eg the monograph \cite{AFP} and the references therein).

It turns out that minimizing the Mumford-Shah functional numerically is a difficult task mainly due to the presence of the surface term $\mathcal{H}^{d-1}(S_u)$. Hence, several kind of approximations have been proposed (cf., \eg \cite{AmTo,AmTo2,BrDM,Go,PCBC}). Among them, the most popular is perhaps the one introduced by Ambrosio and Tortorelli in \cite{AmTo,AmTo2}. Given a small parameter $\e>0$ and $0<\eta_{\e}\ll\e$ the elliptic approximation $AT_{\e}:W^{1,2}(D)\times W^{1,2}(D)\to [0,+\infty]$ is given by
\begin{equation}\label{eq:ATintro}
AT_{\e}(u,v)=\int_D (v^2+\eta_{\e})|\nabla u|^2\,\mathrm{d}x+\frac{\beta}{2}\int_D\frac{(v-1)^2}{\e}+\e|\nabla v|^2\,\mathrm{d}x+\gamma\int_D|u-g|^2\,\mathrm{d}x.
\end{equation}
It is well-known that as $\e\to 0$ the family $AT_{\e}$ approximates the Mumford-Shah functional in the sense of $\Gamma$-convergence (cf. \cite{AmTo,AmTo2}). Since the functionals $AT_\e$ are equicoercive this implies that, up to subsequences, the first component $u_{\e}$ of any global minimizer $(u_{\e},v_{\e})$ of $AT_{\e}$ converges to a global minimizer $u$ of $M\!S$. The second component $v_{\e}$ is a sequence of edge variables that provides a diffuse approximation of $S_{u}$. The functionals $AT_{\e}$ being elliptic, finite-elements or finite-difference schemes can be implemented. On the one hand, $\e$ should be taken very small in order to be sure that the diffuse approximation of $S_u$ produces almost sharp edges. On the other hand, to guarantee that finite elements/differences still approximate the Mumford-Shah functional, former mathematical results assumed the mesh-size used in the discretization step to be infinitesimal with respect to $\e$ (see \cite{BeCo,Bou}). Moreover, in \cite{BBZ} Braides, Zeppieri and the first author have proven that such a condition is indeed necessary to obtain the isotropic surface term $\mathcal{H}^{d-1}(S_u)$ in the $\Gamma$-limit when using a finite-difference discretization on a square lattice (see also \cite{BY} for a similar result concerning the Modica-Mortola functional). Dropping the fidelity term $\gamma\int_D|u-g|^2\dx$, which does not affect the $\Gamma$-convergence analysis, we briefly describe their result. For $\delta_{\e}>0$ such that $\lim_\e\delta_\e=0$, in \cite{BBZ} the authors considered functionals defined for $u,v:\delta_{\e}\Z^d\cap D\to\R$ as
\begin{equation}\label{eq:introBBZ}
E_{\e,\delta_{\e}}(u,v)=\frac{1}{2}\Bigg(\sum_{\substack{i,j\in\delta_{\e}\Z^d\cap D\\ |i-j|=\delta_{\e}}}\hspace*{-1em}\delta_{\e}^dv(i)^2\left|\frac{u(i)-u(j)}{\delta_{\e}}\right|^2+\hspace*{-0.5em}\sum_{i\in\delta_{\e}\Z^2\cap D}\hspace*{-0.5em}\delta_{\e}^d\frac{(v(i)-1)^2}{\e}+\frac{1}{2}\hspace*{-.4em}\sum_{\substack{i,j\in\delta_{\e}\Z^d\cap D\\ |i-j|=\delta_{\e}}}\hspace*{-1em}\e \delta_{\e}^d\left|\frac{v(i)-v(j)}{\delta_{\e}}\right|^2\Bigg).
\end{equation}
In \cite[Theorem 2.1]{BBZ} it has been proven that the $\Gamma$-limit of $E_{\e,\delta_{\e}}$ depends on \mbox{$\ell:=\lim_{\e\to 0}(\delta_{\e}/\e)$} according to the following scheme:
\begin{itemize}
\item[-] if $\ell=0$ then $\Gamma$-$\lim_{\e}E_{\e,\delta_{\e}}$  is the Mumford-Shah functional \eqref{eq:introMS},
\item[-] if $\ell>0$ and $d=2$ then $\Gamma$-$\lim_{\e}E_{\e,\delta_{\e}}$ is an anisotropic free-discontinuity functional,
\item[-] if $\ell=+\infty$ then $\Gamma$-$\lim_{\e}E_{\e,\delta_{\e}}$ is finite only on $W^{1,2}(D)$ where it coincides with $\int_D|\nabla u|^2\dx$.
\end{itemize}
The case $\ell=0$ has also been considered in the recent paper \cite{CSS}, where the authors prove a similar result in dimension $d=2$ and $d=3$ for finite-difference discretizations of Ambrosio-Totorelli-type approximations of the Griffith functional in the context of brittle fracture.
The scheme above points out that this discretization works only for a very fine mesh-size $\delta_{\e}\ll\e$, while it approximates only an anisotropic version of the $MS$ functional for $\delta_{\e}\sim\e$. However an approximation at a scale $\delta_{\e}\sim\e$ is preferable, since it has a lower computational cost with respect to one at a scale $\delta_{\e}\ll\e$. One possible way to avoid the emergence of anisotropy in the limit, while keeping the computational cost low could be to take into account long-range interactions in the approximation of the gradient of the edge variable $v$ (similar to the approach in \cite{Ch99} in the case of the so-called weak-membrane energy) and not only neighboring differences as done in \cite{BBZ}.
Here we take a different approach which draws some inspiration from the recent results in \cite{R18} and exploit the fact that statistically isotropic point sets have the flexibility to approximate interfaces without any directional bias also in the case that only short-range interactions are taken into account.  
More precisely, we use discretizations on random point sets to circumvent anisotropic limits. Namely, we replace periodic lattice in \eqref{eq:introBBZ} by so-called stochastic lattices and then define a random family of discretizations of the Ambrosio-Tortorelli functional \eqref{eq:ATintro} with mesh-size $\delta_{\e}=\e$ for which we can prove $\Gamma$-convergence to the isotropic Mumford-Shah functional almost surely (a.s.). 
We point out that this is a purely theoretical result which suggests a possible way to numerically approximate isotropic free-discontinuity functionals with discrete ones on random grids. In the last section of this paper we select two specific test images to point out some qualitative differences between a segmentation based on such an approximation and that based on a nearest-neighbors discretization of the Ambrosio-Tortorelli functionals on the square lattice. Since the number of nearest-neighbor interactions in the random lattice is in average larger than the one in the square lattice (see also Seciton \ref{sec:numerics}), an in-depth comparison of the two approaches (deterministic and random) should also include possibly long-range deterministic discretizations. The natural question of the quantitative comparison of these different approaches is an interesting problem on its own and is out of the scope of this paper.

We highlight that, although the starting point of the present analysis, namely the discretization on a stochastic lattice, is the same as in \cite{R18}, the proof of the convergence result is quite challenging and requires new ideas. In particular, as we will explain in details later, our result needs a fine characterization of the surface-energy density in \eqref{intro:subseq}, which turns out to be quite involved as it has to take into account the interaction of the two variables in the Ambrosio-Tortorelli approximation.\\

In what follows we give a more detailed description of the results contained in this paper. 
Given a probability space $(\Omega,\mathcal{F},\mathbb{P})$ for each $\omega\in\Omega$ we consider a countable point set $\Lw\subset\R^d$ that satisfies suitable geometric constraints preventing the formation of clusters or arbitrarily large holes (cf. Definition \ref{defadmissible}). Then, given $\e>0$ we introduce a random discretization of the functional in \eqref{eq:ATintro} as the family of functionals $F_{\e}(\w)$ defined on maps $u,v:\e\Lw\cap D\to\R$ by  
\begin{equation}\label{eq:introdefF}
F_{\e}(\w)(u,v)=F_{\e}^{b}(\w)(u,v)+F_{\e}^{s}(\w)(v),
\end{equation}
where $F_{\e}^{b}(\w)$ and $F_{\e}^{s}(\w)$ denote the bulk and surface terms of the discretization, respectively. They are defined as
\begin{equation}\label{eq:introdefbulk}
F_{\e}^{b}(\w)(u,v)=\frac{1}{2}\sum_{\substack{(x,y)\in\mathcal{E}(\w)\\ \e x,\e y\in D}}\e^{d}v(\e x)^2\left|\frac{u(\e x)-u(\e y)}{\e}\right|^{2}
\end{equation}
and
\begin{equation}\label{eq:introdefsurf}
F_{\e}^{s}(\w)(v)=\frac{\beta}{2}\Big(\sum_{\e x\in \e\Lw\cap D}\e^{d-1}(v(\e x)-1)^2+\frac{1}{2}\sum_{\substack{(x,y)\in\mathcal{E}(\w)\\ \e x,\e y\in D}}\e^{d+1}\left|\frac{v(\e x)-v(\e y)}{\e}\right|^{2}\Big).
\end{equation}
In the above sums $\mathcal{E}(\w)\subset\Lw\times\Lw$ denotes a suitable set of short-range edges (for instance the Voronoi neighbors; see Definition \ref{defgoodedges} for general assumptions). Our main result (Theorem \ref{MSapprox}) reads as follows: Assuming the random graph $(\mathcal{L},\mathcal{E})$ to be stationary, ergodic and isotropic in distribution (for a precise definition see Section \ref{subsec:stochlatt}) there exist two positive constants $c_1,c_2$ such that with full probability the functionals $F_{\e}(\w)$ $\Gamma$-converge to the deterministic functional 
\begin{equation}\label{intro:isotropic}
F(u)=
\displaystyle c_1\int_D|\nabla u|^2\,\mathrm{d}x+c_2\mathcal{H}^{d-1}(S_u). 
\end{equation} 
Some remarks are in order:
\begin{enumerate}[label=(\roman*)]
	\item A point process that satisfies all our assumptions is given by the random parking process \cite{glpe,Pe}.
	\item The coefficients $c_1$ and $c_2$ are not given in a closed form but can be estimated by solving two asymptotic minimization problems (see Section \ref{s.presentation}). Moreover, their ratio can be tuned via the parameter $\beta$ since $c_2$ is proportional to $\beta$, while $c_1$ does depend only on the graph $(\mathcal{L},\mathcal{E})$.
	\item Our approach requires to determine only the Voronoi neighbors, but not the volume of teh Voronoi cells or other related geometric quantities. One can also avoid the determination of the Voronoi neighbors using a $k$-NN algorithm with a sufficiently large $k$ (see also the discussion in \cite[Remark 2.7]{R18}).
	\item In the definition of the discrete approximation \eqref{eq:introdefF}, \eqref{eq:introdefbulk}, \eqref{eq:introdefsurf}, we have taken the mesh-size equal to $\e$. Except for the value of the constant $c_2$, the above result and the analysis of this paper remain unchanged if we consider a mesh-size that is only proportional to $\e$ (see also Theorem \ref{t.representationl}). Interestingly, this is the largest possible discretization scale for which the $\Gamma$-limit is of Mumford-Shah type (see Corollary \ref{c.optimal} and the discussion below).
	\item The addition of a fidelity term of the form 
	\begin{equation}\label{eq:fidelity}
	\gamma\sum_{\e x\in\e\Lw\cap D}\e^d|u(\e x)-g_{\e}(\e x)|^2
	\end{equation}
	to the discrete approximations $F_{\e}(\w)(u,v)$ can be analyzed exactly as in \cite[Theorem 3.8]{R18} and leads to an additive term $c_3\int_D|u-g|^2\dx$ in the limit functional, provided the discrete approximation $g_{\e}$ of $g$ converges in $L^2(D)$. Moreover, under this assumption the global minimizers of the modified discrete functionals converge in $L^2(D)$ to the minimizers of the new limit functional 
	\begin{equation*}
	F_g(u)=F(u)+c_3\int_D|u-g|^2\dx.
	\end{equation*}
	In this paper we will neglect the fidelity term for the sake of notational simplicity.  
\end{enumerate}

\medskip
\noindent We now explain briefly the strategy to prove the approximation result described above. It consists of two main steps, a first deterministic one and a second stochastic one. Applying the so-called localization method of $\Gamma$-convergence together with \cite[Theorem 1]{BFLM}, in the first step we show that for a single realization $(\Lw,\mathcal{E}(\w))$ the functionals $F_\e(\w)$ $\Gamma$-converge up to subsequences to a free-discontinuity functional of the form
\begin{equation}\label{intro:subseq}
F(\w)(u)=\int_Df(\w,x,\nabla u)\,\mathrm{d}x+\int_{S_u}\varphi(\w,x,\nu_u)\,\mathrm{d}\mathcal{H}^{d-1},
\end{equation}
(see Theorem \ref{mainrep}). Based on this integral representation, in the second step we establish a stochastic homogenization result (Theorem \ref{mainthm1}), which states that for a stationary and ergodic graph $(\mathcal{L},\mathcal{E})$ the whole sequence $(F_{\e}(\w))$ $\Gamma$-converges a.s. to the functional
\begin{equation}\label{intro:homogenization}
F(u)=\int_Df_{\rm hom}(\nabla u)\,\mathrm{d}x+\int_{S_u}\varphi_{\rm hom}(\nu_u)\,\mathrm{d}\mathcal{H}^{d-1}.
\end{equation}  
In contrast to \eqref{intro:subseq} the densities $f_{\rm hom}$ and $\varphi_{\rm hom}$ in \eqref{intro:homogenization} do not depend on $x$ and are deterministic. Moreover, assuming that in addition the graph $(\mathcal{L},\mathcal{E})$ is isotropic, one can show that also $f_{\rm hom}$ and $\varphi_{\rm hom}$ are isotropic, which finally allows us to write the $\Gamma$-limit in the form \eqref{intro:isotropic}.

We highlight that a crucial step in this procedure consists in proving that a separation of bulk and surface contributions takes place in the limit. More precisely, we show that the bulk density $f(\w,\cdot,\cdot)$ in \eqref{intro:subseq} coincides with the density of the $\Gamma$-limit of the quadratic functionals $u\mapsto F_{\e}^{b}(\w)(u,1)$ defined in \eqref{eq:introdefbulk}, while the surface density $\varphi(\w,\cdot,\cdot)$ is determined by solving a $u$-dependent non-convex constrained optimization problem involving only the surface contribution $F_{\e}^s(\w)$ (see Remark \ref{r.blowupformulas}). Such a separation of energy contributions in the characterization of the surface density has already been a major issue in \cite{BBZ}. There the authors use a geometric construction to show that in dimension 2 the discrete bulk energy can be neglected in the formula of the surface integrand (cf. \cite[Theorem 5.10]{BBZ}). This explicit construction is however not feasible for a stochastic lattice. Instead our approach is more abstract. It makes use of a weighted coarea formula (cf. Lemma \ref{separationofscales1}) that works both in the case of stochastic and deterministic lattices and in any dimension. Hence the characterization of $\varphi(\w,\cdot,\cdot)$ can be seen as one of the main novelties in this paper. Moreover, it is a key ingredient in the proof of the stochastic homogenization result. More in detail, it leads to the definition of a suitable subadditive stochastic process that can be analyzed as in \cite{ACR,BCR,CDMSZ17a} via ergodic theorems and finally to the almost sure existence of the $\Gamma$-limit as in \eqref{intro:homogenization}. 

The above integral representation can be extended to the case where the discretization parameter $\delta_\e$ is only proportional to $\e$. More in detail, we show that for $\delta_\e=\ell\e$ with $\ell\in (0,+\infty)$ the volume integrand in \eqref{intro:subseq} remains unchanged, while the surface integrand depends on the ratio $\ell$ and blows up linearly as $\ell\to+\infty$ (cf. Theorem \ref{t.representationl}). This indicates that as in the deterministic case considered in \cite{BBZ}, the stochastic discretization of the Ambrosio-Tortorelli functionals on a scale $\delta_\e\gg\e$ cannot converge to a functional that is finite on $SBV(D)\setminus W^{1,2}(D)$. Indeed, this is shown in Corollary \ref{c.optimal}. In that sense the discretization of the Ambrosio-Tortorelli functionals defined in \eqref{eq:introdefF} can be interpreted as optimal since it approximates the Mumford-Shah functional at the largest possible discretization scale.

\medskip

The paper is organized as follows. In Section \ref{Sec:prelim} we introduce the notation used throughout the paper, before presenting the general results in Section \ref{s.presentation}. The latter section contains our main approximation result Theorem \ref{MSapprox} together with the integral-representation result and the stochastic homogenization theorem mentioned above, which we consider to be of independent interest for the reader. In particular, we also present here the asymptotic minimization formula characterizing $\varphi(\w,\cdot,\cdot)$ and we discover a natural relation between our discrete Ambrosio-Tortorelli functionals and weak-membrane energies. 
The proofs of the general results are carried out in Sections \ref{s.proofs} and \ref{s.stochhom}. Section \ref{s.proofs} contains the proof of the integral-representation result and the asymptotic formulas for the integrands, while the stochastic homogenization result is proven in Section~\ref{s.stochhom}. 
Eventually, in Section \ref{sec:numerics} we briefly explain how to use our approximation result in practice, i.e., we describe the construction of a suitable stochastic lattice. We also include some numerical results based on an alternating minimization scheme highlighting the different behavior of the discrete functionals in \eqref{eq:introBBZ} and \eqref{eq:introdefF} regarding (an)isotropy.

\section{Setting of the problem and preliminaries }\label{Sec:prelim}
\subsection{General notation}
We first introduce some notation that will be used in this paper. Given a measurable set $A\subset\R^d$ we denote by $|A|$ its $d$-dimensional Lebesgue measure, and by $\mathcal{H}^{k}(A)$ its $k$-dimensional Hausdorff measure. We denote by $\mathds{1}_A$ the characteristic function of $A$. If $A$ is finite, $\#A$ denotes its cardinality. Given an open set $O\subseteq\R^d$, we denote by $\mathcal{A}(O)$ the family of all bounded, open subsets of $O$ and by $\mathcal{A}^R(O)$ the family of bounded, open subsets with Lipschitz boundary. Given $A\in\mathcal{A}^R(O)$ and $\delta>0$ we set
\begin{equation*}
\partial_\delta A:=\{x\in\R^d\colon \dist(x,\partial A)\leq\delta\}.
\end{equation*}
For $x\in\R^d$ we denote by $|x|$ the Euclidean norm. As usual $B_{\varrho}(x_0)$ denotes the open ball with radius $\varrho$ centered at $x_0\in\R^d$. We write $B_{\varrho}$ when $x_0=0$. Given $\nu\in S^{d-1}$, we let $\nu_1=\nu,\nu_2,\dots,\nu_d$ be an orthonormal basis of $\R^d$ and we define the cube $Q_{\nu}$ as
\begin{equation}\label{eq:defcube}
Q_{\nu}=\left\{z\in\R^d:\;|\langle z,\nu_i\rangle| <1/2\quad\forall i=1,\ldots,d\right\},
\end{equation}
where the brackets $\langle\cdot,\cdot\rangle$ denote the scalar product. Given $x_0\in\R^d$ and $\varrho>0$, we set $Q_{\nu}(x_0,\varrho)=x_0+\varrho Q_{\nu}$. We also denote by $H_\nu(x_0)$ the hyperplane orthogonal to $\nu$ and passing through $x_0$. If $x_0=0$ we simply write $H_\nu$.

For $p\in [1,+\infty]$ we use standard notation $L^p(D)$ for the Lebesgue spaces and $W^{1,p}(D)$ for the Sobolev spaces.
We denote by $SBV(D)$ the space of special functions of bounded variation in $D$ (for the general theory see, \eg \cite{AFP}).
If  $u\in SBV(D)$ we denote by $\nabla u$ its approximate gradient, by $S_u$ the approximate discontinuity set of $u$, by $\nu_u$ the generalized outer normal to $S_u$, and $u^+$ and $u^-$ are the traces of $u$ on both sides of $S_u$. Moreover, we consider the larger space $GSBV(D)$, which consists of all functions $u\in L^1(D)$ such that for each $k\in\N$ the truncation of $u$ at level $k$ defined as $T_{k}u:=-k\vee(u\wedge k)$ belongs to $SBV(D)$. Furthermore, we set
\[SBV^2(D):=\{u\in SBV(D): \nabla u\in L^2(D)\ \text{and}\ \Hd(S_u)<+\infty\}\]
and
\[GSBV^2(D):=\{u\in GSBV(D): \nabla u\in L^2(D)\ \text{and}\ \Hd(S_u)<+\infty\}.\]
It can be shown that $SBV^2(D)\cap L^\infty(D)=GSBV^2(D)\cap L^\infty(D)$.

For $x_0\in \R^d$, $\nu\in S^{d-1}$ and $a,b\in\R$ we define the function $u_{x_0,\nu}^{a,b}:\R^{d}\to\R$ as
\begin{equation}\label{eq:purejump}
u_{x_0,\nu}^{a,b}(x):=\begin{cases} a &\mbox{if $\langle x-x_0,\nu\rangle >0$,}
\\
b &\mbox{otherwise.}
\end{cases}
\end{equation}
Moreover, for $x_0,\xi\in\R^d$ we denote by $u_{x_0,\xi}$ the affine function defined as
\begin{equation}\label{def:affine}
u_{x_0,\xi}(x):=\langle\xi,x-x_0\rangle.
\end{equation}
Finally, the letter $C$ stands for a generic positive constant that may change every time it appears.

\subsection{Stochastic lattices}\label{subsec:stochlatt}
Throughout this paper we let $\Omega$ be a probability space with a complete $\sigma$-algebra $\mathcal{F}$ and probability measure $\mathbb{P}$. We call a random variable $\mathcal{L}:\Omega\to(\R^d)^{\mathbb{N}}$ a stochastic lattice. A realization of the stochastic lattice will be denoted by $\Lw$ and we also refer to it as a stochastic lattice. The following definition essentially forbids clustering of points as well as arbitrarily big empty regions in space. 

\begin{definition}[Admissible lattices]\label{defadmissible}
Let $\mathcal{L}$ be a stochastic lattice. $\mathcal{L}$ is called admissible if there exist $R>r>0$ such that the following two conditions hold a.s.:
\begin{itemize}
	\item[(i)] $\dist(x,\mathcal{L}(\w))< R\quad$ for all $x\in\R^d$;
	\item [(ii)] $\dist(x,\Lw\setminus\{x\})\geq r\quad$ for all $x\in\Lw$.
\end{itemize}
\end{definition}

\begin{remark}\label{voronoi}
We also make use of the associated Voronoi tessellation $\mathcal{V}(\w)=\{\mathcal{C}(x)\}_{x\in\Lw}$, where the (random) Voronoi cells with nuclei $x\in\Lw$ are defined as
\begin{equation*}
\mathcal{C}(x):=\{z\in\R^d:\;|z-x|\leq |z-y|\quad\text{for all }y\in\Lw\}.
\end{equation*}	
If $\Lw$ is admissible, then \cite[Lemma 2.3]{ACR} yields the inclusions $B_{\frac{r}{2}}(x)\subset \mathcal{C}(x)\subset B_R(x)$.
\end{remark}
Next we introduce some notions from ergodic theory that build the basis for stochastic homogenization.

\begin{definition}\label{def.groupaction}
	We say that a family of measurable functions $\{\tau_z\}_{z\in \mathbb{Z}^d},\tau_z:\Omega\to\Omega$, is an additive group action on $\Omega$ if
	\begin{equation*}
	\tau_0={\rm id}\quad\text{and}\quad\tau_{z_1+z_2}=\tau_{z_2}\circ\tau_{z_1}\quad\text{for all  } z_1,z_2\in\mathbb{Z}^d.
	\end{equation*} 
	An additive group action is called measure preserving if 
	\begin{equation*}
	\mathbb{P}(\tau_z B)=\mathbb{P}(B)\quad \text{ for all  } B\in\mathcal{F},\,z\in\mathbb{Z}^d.
	\end{equation*}
	Moreover, $\{\tau_z\}_{z\in\mathbb{Z}^d}$ is called ergodic if, in addition, for all $B\in\mathcal{F}$ we have the implication
	\begin{equation*}
	(\tau_z(B)=B\quad\forall z\in \mathbb{Z}^d)\quad\Rightarrow\quad\mathbb{P}(B)\in\{0,1\}.
	\end{equation*}
\end{definition}

\begin{definition}\label{defstatiolattice}
	A stochastic lattice $\mathcal{L}$ is said to be stationary if there exists an additive, measure preserving group action $\{\tau_z\}_{z\in\mathbb{Z}^d}$ on $\Omega$ such that for all $z\in\mathbb{Z}^d$
	\begin{equation*}
	\mathcal{L}\circ\tau_z=\mathcal{L}+z.
	\end{equation*}
	If in addition $\{\tau_z\}_{z\in\mathbb{Z}^d}$ is ergodic, then $\mathcal{L}$ is called ergodic, too.
	\\
	We call $\mathcal{L}$ isotropic, if for every $R\in SO(d)$ there exists a measure preserving function $\tau^{\prime}_R:\Omega\to\Omega$ such that
	\begin{equation*}
	\mathcal{L}\circ\tau^{\prime}_R=R\mathcal{L}.
	\end{equation*}
\end{definition}
\noindent In order to define gradient-like structures, we equip a stochastic lattice with a set of directed edges. 
\begin{definition}[Admissible edges]\label{defgoodedges}
Let $\mathcal{L}$ be an admissible stochastic lattice and $\mathcal{E}\subset\mathcal{L}^2$. We say that $\mathcal{E}$ is a collection of admissible undirected\footnote{One can also consider directed edges as done in \cite{R18} but then the arguments get more intricate.} edges if for all $i,j\in\mathbb{N}$ the set $\{\w\in\Omega:\;(\Lw_{i},\Lw_{j})\in\mathcal{E}(\w)\}$ is $\mathcal{F}$-measurable and
\begin{itemize}
	\item[(i)]  there exists $M>R$ such that a.s. \begin{equation}\label{finiterange}
	\sup\{|x-y|:\; (x,y)\in\mathcal{E}(\w)\}< M;
	\end{equation}
	\item[(ii)] the Voronoi neighbors $\Nw$ are contained in $\mathcal{E}(\w)$, i.e., 
	\begin{equation}\label{nncontained}
	\Nw:=\{(x,y)\in\Lw^2:\;\mathcal{H}^{d-1}(\mathcal{C}(x)\cap\mathcal{C}(y))\in (0,+\infty)\}\subset \mathcal{E}(\w).
	\end{equation}
\end{itemize}
If $\mathcal{L}$ is stationary or isotropic, we say that the edges $\mathcal{E}$ are stationary or isotropic if $\mathcal{E}\circ\tau_z=\mathcal{E}+(z,z)$ for all $z\in\mathbb{Z}^d$ or $\mathcal{E}\circ\tau^{\prime}_R=R\mathcal{E}$ for all $R\in SO(d)$. 	

\noindent For every $x\in\Lw$ we also set $\mathcal{E}(\w)(x):=\{y\in\Lw\colon (x,y)\in\mathcal{E}(\w)\}$.
\end{definition}
\noindent Enlarging $M$ if necessary, by Remark \ref{voronoi} we may assume without loss of generality that
\begin{equation}\label{neighbours}
\sup_{x\in\Lw}\#\mathcal{E}(\w)(x)\leq M.
\end{equation}
\subsection{Discretized Ambrosio-Tortorelli functionals}
In order to define the discrete approximation of the Ambrosio-Tortorelli functional \eqref{eq:ATintro} we scale a stochastic lattice by the same small parameter $\e>0$. Given a fixed bounded Lipschitz domain $D\subset\R^d$ and two functions $u,v:\e\Lw\cap D\to\R$ we define the localized discretization on an open set $A\in\mathcal{A}(\R^d)$ by
\begin{equation}\label{eq:defF}
F_{\e}(\w)(u,v,A):=F_{\e}^{b}(\w)(u,v,A)+F_{\e}^{s}(\w)(v,A),
\end{equation}
where the bulk and surface terms are defined as
\begin{equation}\label{eq:defbulk}
F_{\e}^{b}(\w)(u,v,A):=\frac{1}{2}\sum_{\substack{(x,y)\in\mathcal{E}(\w)\\ \e x,\e y\in A}}\e^{d}v(\e x)^2\left|\frac{u(\e x)-u(\e y)}{\e}\right|^{2}
\end{equation}
and
\begin{equation}\label{eq:defsurf}
F_{\e}^{s}(\w)(v,A):=\frac{\beta}{2}\Big(\sum_{\e x\in \e\Lw\cap A}\e^{d-1}(v(\e x)-1)^2+\frac{1}{2}\sum_{\substack{(x,y)\in\mathcal{E}(\w)\\ \e x,\e y\in A}}\e^{d-1}|v(\e x)-v(\e y)|^{2}\Big),
\end{equation}
respectively. If $A=D$ we write simply $F_{\e}(\w)(\cdot,\cdot)$ for $F_{\e}(\w)(\cdot,\cdot,D)$.

In order to recast our approximation problem in the framework of $\Gamma$-convergence (we refer the reader to \cite{GCB, DM} for a general overview of this topic), we will identify discrete functions with their piecewise constant interpolations on the Voronoi cells of the lattice, that is with functions of the class
\begin{equation*}
\mathcal{PC}_{\e}^{\w}:=\{u:\R^d\to\R:\;u_{|\e \mathcal{C}(x)}\text{ is constant for all }x\in\Lw\}.
\end{equation*}
With a slight abuse of notation we extend the functional to $F_{\e}(\w):L^{1}(D)\times L^1(D)\times\mathcal{A}(D)\to[0,+\infty]$ by setting
\begin{equation}\label{deffunctional}
F_{\e}(\w)(u,v,A):=
\begin{cases} 
F_{\e}(\w)(u,v,A) &\mbox{if $u,v\in\mathcal{PC}_{\e}^{\w}$, $0\leq v\leq 1$,}\\
+\infty &\mbox{otherwise.}
\end{cases}
\end{equation}
\section{Presentation of the general results}\label{s.presentation}
In this section we present the main results of the paper.
\subsection{Integral representation and separation of bulk and surface contributions}\label{s.intrep}
Our first main result is stated below in Theorem \ref{mainrep}. It shows that for every admissible lattice $\mathcal{L}$ the discrete functionals defined in \eqref{deffunctional} $\Gamma$-converge (up to subsequences) in the strong $L^1(D)\times L^1(D)$-topology to a free-discontinuity functional. Moreover, bulk and surface contributions essentially decouple in the limit. More precisely, the volume integrand coincides with the density of the discrete quadratic functionals $u\mapsto F_{\e}^{b}(\w)(u,1)$ given by \eqref{eq:defbulk}, while the surface integrand is determined by solving a $u$-dependent constrained minimization problem which involves only the surface energy $F_\e^s$ (cf. Remark \ref{r.blowupformulas}). Note that these results are true pointwise for a fixed realization of the random graph as long as the realization satisfies the geometric conditions in Definitions \ref{defadmissible} and \ref{defgoodedges}. In order to give the precise statement of the theorem we first recall a convergence result for the functionals $F_{\e}^{b}(\w)(\cdot,1)$ (here we implicitly consider as domain of this functional the set $\mathcal{PC}_{\e}^{\w}$) which is a direct consequence of \cite[Theorem 3]{ACG2} and of the fact that the $\Gamma$-limit of quadratic functionals is quadratic, too.
\begin{theorem}[\cite{ACG2}]\label{ACGmain}
Let $\Lw$ be an admissible stochastic lattice with admissible edges. For every sequence $\e\to 0$ there exists a subsequence $\e_n$ (possibly depending on the realization) such that for every $A\in\Ard$ the functionals $F_{\e_n}^b(\w)(\cdot,1,A)$ $\Gamma$-converge in the strong $L^2(D)$-topology to a functional $F^b(\w)(\cdot,A):L^2(D)\to [0,+\infty]$ that is finite only on $W^{1,2}(A)$, where it takes the form
\begin{equation*}
F^b(\w)(u,A)=
\int_A f(\w,x,\nabla u)\,\mathrm{d}x
\end{equation*}
for some non-negative Carath\'eodory-function $f(\w,\cdot,\cdot)$ that is quadratic in the second variable for a.e. $x\in D$ and satisfies the growth conditions
\begin{equation*}
\frac{1}{C}|\xi|^2\leq f(\w,x,\xi)\leq C|\xi|^2.
\end{equation*}
\end{theorem}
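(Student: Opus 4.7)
The plan is to invoke \cite[Theorem 3]{ACG2} essentially as a black box for the compactness and integral-representation part, and then to verify by hand that the limiting integrand is quadratic. Below I outline the strategy one would follow to prove the cited theorem in the present setting, and why the quadratic property is preserved.

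First, one would establish equicoercivity in $L^2(D)$. The admissibility of $\Lw$ (bounds $r,R,M$) together with \eqref{neighbours} allows one to compare $F_\e^b(\w)(u,1,A)$ with the $L^2$-norm of the gradient of a suitable interpolation of $u$ built from the Voronoi cells. A discrete Poincar\'e inequality on $A\in\Ard$, applicable since the lattice has no large holes, then shows that sequences $u_\e\in\mathcal{PC}_\e^\w$ of equibounded energy (with a boundary normalization) are precompact in $L^2(A)$ and their cluster points lie in $W^{1,2}(A)$.

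Second, one applies the standard compactness theorem for $\Gamma$-convergence in the localization framework. Along a subsequence $\e_n\to 0$ extracted by a diagonal argument on a countable dense subfamily of $\Ard$, the set functions $A\mapsto F'(\w)(u,A)$ and $A\mapsto F''(\w)(u,A)$, defined as the $\Gamma$-$\liminf$ and $\Gamma$-$\limsup$ respectively, coincide. A fundamental estimate, obtained by gluing discrete configurations via cut-off functions on nested annular regions and exploiting the finite interaction range $M\e$ from \eqref{finiterange}, guarantees that the common value is inner regular and subadditive in $A$, hence extends to a Borel measure which, by comparison with the upper growth bound, is absolutely continuous with respect to Lebesgue measure. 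The Buttazzo--Dal Maso integral-representation theorem then yields a Carath\'eodory density $f(\w,\cdot,\cdot)$ with the stated two-sided $|\xi|^2$-growth, the upper bound being tested on the affine competitor $u_{0,\xi}$ from \eqref{def:affine} and the lower bound following from the discrete Poincar\'e estimate applied to the boundary condition.

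Finally, to see that $\xi\mapsto f(\w,x,\xi)$ is quadratic for a.e.\ $x$, one exploits the fact that each $F_\e^b(\w)(\cdot,1,A)$ is a positive quadratic form on the linear space $\mathcal{PC}_\e^\w$ and therefore satisfies the parallelogram identity $F(u+w)+F(u-w)=2F(u)+2F(w)$. Combining recovery sequences for $u+w$ and $u-w$ with almost-optimal sequences for $u$ and $w$ and passing to the liminf/limsup on both sides transfers the identity to the $\Gamma$-limit, so that $u\mapsto\int_A f(\w,x,\nabla u)\dx$ is itself a quadratic form on $W^{1,2}(A)$; localizing at Lebesgue points then forces $f(\w,x,\cdot)$ to be a quadratic form in $\xi$ for almost every $x$. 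The main obstacle in this strategy is the fundamental estimate used in the second step: on a stochastic lattice one cannot rely on the regular structure of a periodic mesh to cut and paste discrete configurations without an uncontrolled energy surplus, so one has to average over a one-parameter family of nested cut-off regions inside the overlap of two domains, using the uniform interaction range $M$ and the coordination bound \eqref{neighbours} to control the interfacial contribution.
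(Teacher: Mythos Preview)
Your proposal is correct and follows essentially the same approach as the paper: the paper does not give a proof of this statement at all, but simply records it as ``a direct consequence of \cite[Theorem 3]{ACG2} and of the fact that the $\Gamma$-limit of quadratic functionals is quadratic, too.'' Your outline unpacks exactly these two ingredients---the black-box invocation of \cite{ACG2} for compactness and integral representation, and the parallelogram-law argument for the quadratic structure of the limit---so there is nothing to add.
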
 
We are now in a position to state our first main result.
\begin{theorem}\label{mainrep}
Let $\mathcal{L}(\w)$ be an admissible stochastic lattice with admissible edges. For every sequence $\e\to 0$ there exists a subsequence $\e_n$ (possibly depending on the realization) such that for every $A\in\Ard$ the functionals $F_{\e_n}(\w)(\cdot,\cdot,A)$ $\Gamma$-converge in the strong $L^1(D)\times L^1(D)$-topology to a free-discontinuity functional $F(\w)(\cdot,\cdot,A):L^1(D)\times L^1(D)\to[0,+\infty]$ of the form
\begin{align*}
F(\w)(u,v,A)=
\begin{cases}
\displaystyle\int_A f(\w,x,\nabla u)\dx+\int_{S_u\cap A}\varphi(\w,x,\nu_u)\dHd &\text{if $u\in GSBV^2(A)$, $v=1$ a.e. in $A$},\\
+\infty &\text{otherwise in $L^1(D)\times L^1(D)$},
\end{cases}
\end{align*}
where $\varphi(\w,\cdot,\cdot)$ is a measurable function and $f(\w,\cdot,\cdot)$ is given by Theorem \ref{ACGmain}.
\end{theorem}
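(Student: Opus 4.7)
The plan is to apply the localization method of $\Gamma$-convergence. By the general compactness property of $\Gamma$-convergence, for any sequence $\e\to 0$ one extracts a subsequence $\e_n$ such that, for every $A$ in a countable dense subfamily $\mathcal{R}\subset\Ard$, the $\Gamma$-limit $F(\w)(u,v,A)$ in the strong $L^1(D)\times L^1(D)$-topology exists for every pair $(u,v)$. The coercive bound $F_\e^s(\w)(v,A)\geq\frac{\beta}{2}\sum_{\e x\in\e\Lw\cap A}\e^{d-1}(v(\e x)-1)^2$ integrates, via the piecewise-constant extension of $v$ on Voronoi cells, to a lower bound of order $\e^{-1}\int_A(v-1)^2\dx$, which forces any sequence with equi-bounded energy to satisfy $v_\e\to 1$ in $L^2(D)$ at rate $\sqrt{\e}$, and hence $v\equiv 1$ a.e.\ on $A$ in the domain of $F(\w)$. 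Combined with the $SBV$-compactness of Lemma \ref{compact} (whose proof relies on the weak-membrane interpretation of Proposition \ref{prop:ATandWeak} to reduce to the framework of \cite{R18}), this yields $u\in GSBV^2(A)$.

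\textbf{Integral representation via BFLM.} With the domain identified, I apply \cite[Theorem 1]{BFLM} to the functional $u\mapsto F(\w)(u,1,A)$. The required hypotheses are: (i) $L^1$-lower semicontinuity, automatic from $\Gamma$-convergence; (ii) the two-sided growth
\[
\tfrac{1}{C}\Bigl(\int_A|\nabla u|^2\dx+\Hd(S_u\cap A)\Bigr)\leq F(\w)(u,1,A)\leq C\Bigl(\int_A|\nabla u|^2\dx+\Hd(S_u\cap A)\Bigr),
\]
where the lower bound comes from Lemma \ref{bounds} and the upper bound from a direct recovery construction for $SBV^2\cap L^\infty$ functions, setting $v_\e=0$ on an $\e$-tube around $S_u$ with a smooth transition to $1$ outside; and (iii) the measure property of $A\mapsto F(\w)(u,1,A)$ via the De Giorgi--Letta criterion. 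The latter is reduced to a fundamental estimate gluing two recovery pairs across overlapping open sets on a thin annular buffer, performed simultaneously for $u_\e$ and $v_\e$, the extra cost being bounded thanks to the admissibility conditions on the edges in Definition \ref{defgoodedges}. An appeal to \cite[Theorem 1]{BFLM}, followed by truncation to extend from $SBV^2\cap L^\infty$ to $GSBV^2$, produces measurable densities $f(\w,\cdot,\cdot),\varphi(\w,\cdot,\cdot)$ and the claimed representation.

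\textbf{Identification of the bulk density.} It remains to identify $f$ with the density from Theorem \ref{ACGmain}. For $u\in W^{1,2}(A)$ the jump set is $\Hd$-null, so the representation reduces to $F(\w)(u,1,A)=\int_A f(\w,x,\nabla u)\dx$. The inequality $F(\w)(u,1,A)\leq F^b(\w)(u,A)$ is immediate by pairing a recovery sequence from Theorem \ref{ACGmain} with $v_\e\equiv 1$, which zeroes out $F_\e^s$. For the reverse inequality, given $(u_\e,v_\e)\to(u,1)$ with bounded energy, the weighted coarea formula of Proposition \ref{separationofscales1} is applied to re-parametrize the $v_\e$-weighted bulk energy over the superlevel sets $\{v_\e>t\}$. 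Choosing a good threshold $t\in(0,1)$ at which the discrete perimeter of $\{v_\e>t\}$ is controlled, a local surgery replaces $v_\e$ by $1$ on this superlevel set and patches $u_\e$ across its small complement, yielding a competitor for $F_{\e_n}^b(\w)(\cdot,1,A)$ of comparable energy. Polarization of the resulting quadratic form then gives $f(\w,x,\cdot)=f^b(\w,x,\cdot)$ a.e.

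\textbf{Main obstacle.} The decisive difficulty throughout is the decoupling of bulk and surface energies, which underpins both the identification of $f$ above and, more critically, the asymptotic formula for $\varphi$ used later in the stochastic homogenization step. In \cite{BBZ} this decoupling was achieved by an explicit two-dimensional geometric slicing that has no analogue on a stochastic lattice in arbitrary dimension. The weighted coarea formula of Proposition \ref{separationofscales1} is the replacement tool: it provides, at the discrete level and in any dimension, a systematic mechanism to trade the $v$-weight in the bulk term for a surface measure, thereby enabling both the separation argument above and the representation of $\varphi$ through a surface-only minimization problem.
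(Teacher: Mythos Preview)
Your overall architecture---localization, domain identification, BFLM representation, then identification of the integrands---matches the paper. However, two points deserve correction.

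\textbf{Bulk density: different route.} For the inequality $F(\w)(u,1,A)\geq F^b(\w)(u,A)$ on $W^{1,2}$ functions, you propose a surgery on superlevel sets $\{v_\e>t\}$ and a ``patching of $u_\e$ across the small complement'' to manufacture a competitor for $F_{\e_n}^b(\w)(\cdot,1,A)$. This is \emph{not} what Proposition \ref{separationofscales1} does (its coarea formula is applied to $u_\e$, not $v_\e$, and in the surface regime), and the patching step is genuinely delicate on a stochastic lattice: the set $\{v_\e\leq t\}$ has only $O(\e^{1-d})$ points but no a priori geometric structure, so extending $u_\e$ across it with controlled discrete gradient is nontrivial. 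The paper bypasses this entirely: by Proposition \ref{prop:ATandWeak} one has $F_\e(\w)(u_\e,v_\e,B_\varrho)\geq G_{\e,\beta}(\w)(u_\e,B_\varrho)$, and Theorem \ref{t.weakmembrane} states that the bulk integrand of the weak-membrane $\Gamma$-limit already equals the $f$ of Theorem \ref{ACGmain}. This one-line comparison gives the lower bound at the level of densities (Proposition \ref{p.gradientpartsequal}) with no surgery at all.

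\textbf{Surface density: genuine gap.} The BFLM theorem produces a surface integrand of the form $\varphi(\w,x,u^+-u^-,\nu)$, depending on the jump opening. The statement of Theorem \ref{mainrep} claims $\varphi(\w,x,\nu)$, independent of $u^+-u^-$. You do not address this. In the paper this is the actual purpose of Proposition \ref{separationofscales1}: combined with Lemma \ref{approxminprob} it shows that the derivation formula \eqref{derivationformula} for $\varphi$ coincides with the constrained problem \eqref{def:surf:int}, in which $F_\e^b=0$ is imposed and the value of $a$ becomes irrelevant (Remark \ref{r.jumpindependence}). Your ``Main obstacle'' paragraph correctly identifies the coarea formula as the key tool, but misplaces its role: it is used for the \emph{surface} integrand (to kill the jump-size dependence), not for the bulk identification. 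Finally, a minor slip: you cite Lemma \ref{bounds} for the lower bound and a direct construction for the upper bound; in the paper the labels are the other way round (Lemma \ref{compact} gives the lower bound via weak membranes, Lemma \ref{bounds} the upper bound).
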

\begin{remark}\label{r.blowupformulas}
Both the integrands $\varphi(\w,\cdot,\cdot)$ and $f(\w,\cdot,\cdot)$ provided by Theorem \ref{mainrep} can be characterized by asymptotic formulas. We write them after introducing some notation. For every $A\in\mathcal{A}(\R^d)$, $\delta>0$ and every pointwise well-defined function $\bar{u}\in L^\infty_{\rm loc}(\R^d)$ we denote by $\mathcal{PC}_{\e,\delta}^{\w}(\bar{u},A)$ the set 
\begin{equation}\label{def:discreteboundarycond}
\mathcal{PC}_{\e,\delta}^{\w}(\bar{u},A):=\{u\in\mathcal{PC}_{\e}^{\w}:\,u(\e x)=\bar{u}(\e x)\quad\text{if }\e x\in\Lw\cap\partial_\delta A\}
\end{equation} 
of those $\mathcal{PC}_{\e}^{\w}$-functions whose values agree with those of $\bar{u}$ in a discretized $\delta$-neighborhood of $\partial A$. Then for a.e. $x_0\in D$ and every $\xi\in\R^d$ it holds that
\begin{equation*}
f(\w,x_0,\xi)=\lim_{\varrho\to 0}\varrho^{-d}\lim_{n\to+\infty}\inf\{F_{\e_n}^b(\w)(u,1,Q_{e_1}(x_0,\varrho))\colon u\in\mathcal{PC}_{\e_n, M\e_n}^\w(u_{x_0,\xi},Q_{e_1}(x_0,\varrho))\},
\end{equation*}
where $u_{x_0,\xi}$ is the affine function defined in \eqref{def:affine} and $M$ is the maximal range of interactions in Definition \ref{defgoodedges}. Moreover, for every $x_0\in D$, $a\in\R$ and $\nu\in S^{d-1}$ we define the class of functions
\begin{equation}\label{asymptoticformula}
\mathcal{S}_{\e,\delta}^{\w}(u_{x_0,\nu}^{a,0},Q_{\nu}(x_0,\varrho))=\{u\in \mathcal{PC}_{\e,\delta}^{\w}(u_{x_0,\nu}^{a,0},Q_{\nu}(x_0,\varrho)):\;u(\e x)\in \{a,0\}\text{ for all }x\in\Lw\}
\end{equation} 
and we introduce the function 
\begin{equation}\label{eq:barvboundary}
v_{x_0,\nu}^{\e}(x):=
\begin{cases}
0 &\text{if}\ |\langle x-x_0,\nu\rangle|\leq M \e,\\
1 &\text{otherwise.}
\end{cases}
\end{equation}
We also consider the minimization problem
\begin{multline}\label{def:surf:int}
\varphi_{\e,\delta}^{\w}(u_{x_0,\nu}^{a,0},Q_\nu(x_0,\varrho))=\inf\Big\{F_{\e}^{s}(\w)(v,Q_\nu(x_0,\rho))\colon v\in\mathcal{PC}_{\e,M\e}^\w(v_{x_0,\nu}^\e,Q_\nu(x_0,\varrho)),\\
\exists\, u\in\mathcal{S}_{\e,\delta}^\w(u_{x_0,\nu}^{a,0},Q_\nu(x_0,\varrho)):\, F_{\e}^{b}(\w)(u,v,Q_{\nu}(x_0,\varrho))=0\Big\}.
\end{multline}
For every $(x_0,\nu)\in D\times S^{d-1}$ we then have that the surface density of $F(\w)$ in Theorem \ref{mainrep} is given by
\begin{equation}\label{eq:formula_phi}
\varphi(\w,x_0,\nu)=\limsup_{\varrho\to 0}\varrho^{1-d}\lim_{\delta\to 0}\limsup_{n\to +\infty}\varphi_{\e_n,\delta}^\w(u_{x_0,\nu}^{1,0},Q_\nu(x_0,\varrho)).
\end{equation}
Note that the boundary conditions for $v$ in the definition of \eqref{def:surf:int} are posed on a much smaller layer than those for $u$. This is only due to technical reasons in the proof of Lemma \ref{separationofscales1}. Alternatively we could also require that $v\in\mathcal{PC}_{\e,\delta-M\e}^\w(v_{x_0,\nu}^\e,Q_\nu(x_0,\varrho))$, but this would overburden the notation.
\end{remark}
\subsection{Stochastic homogenization and convergence to the Mumford-Shah functional}
Our second main result relies on the statistical properties of the lattice and the edges. More precisely, when $\mathcal{L}$ and $\mathcal{E}$ are stationary we can prove the following stochastic homogenization result, which shows in particular that in this case the $\Gamma$-limit provided by Theorem \ref{mainrep} is independent of the converging subsequence and hence the whole sequence converges. 
\begin{theorem}\label{mainthm1}
Let $\mathcal{L}$ be an admissible stationary stochastic lattice with admissible stationary edges in the sense of Definitions \ref{defadmissible} and \ref{defgoodedges}. Then for $\mathbb{P}$-a.e. $\w\in\Omega$ and for every $\xi\in\R^d$, $\nu\in S^{d-1}$ there exist the limits
\begin{align}
f_{\rm hom}(\w,\xi) &=\lim_{t\to+\infty}t^{-d}\inf\{F_{1}^{b}(\w)(u,1,Q(0,t))\colon u\in\mathcal{PC}_{1,M}^\w(u_{0,\xi},Q(0,t))\},\label{ex:hom:bulk}\\
\varphi_{\rm hom}(\w,\nu) &=\lim_{t\to+\infty}t^{1-d}\varphi_{1,M}^\w(u_{0,\nu}^{1,0},Q_{\nu}(0,t)),\label{ex:hom:surf}
\end{align}
where $\varphi_{1,M}^\w$ is defined as in \eqref{def:surf:int}. Moreover, the functionals $F_{\e}(\w)$ $\Gamma$- converge in the strong $L^1(D)\times L^1(D)$-topology to the functional $F_{\hom}(\w):L^1(D)\times L^1(D)\to[0,+\infty]$ defined by
\begin{align*}
F_{\rm hom}(\w)(u,v):=
\begin{cases}
\displaystyle\int_D f_{\rm hom}(\w,\nabla u)\dx+\int_{S_u}\varphi_{\rm hom}(\w,\nu_u)\dHd &\text{if $u\in GSBV^2(D)$, $v=1$ a.e. in $D$},\\
+\infty &\text{otherwise in $L^1(D)\times L^1(D)$}.
\end{cases}
\end{align*}
If in addition $\mathcal{L}$ is ergodic then $f_{\rm hom}$ and $\varphi_{\rm hom}$ are independent of $\w$.
\end{theorem}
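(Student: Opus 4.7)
The plan is to combine the integral representation of Theorem \ref{mainrep} with subadditive ergodic theorems applied separately to bulk and surface stochastic processes, in the spirit of \cite{ACR, BCR, CDMSZ17a}. Starting from any subsequence $\e \to 0$, I would first extract via Theorem \ref{mainrep} a further subsequence $\e_n$ along which $F_{\e_n}(\w)$ $\Gamma$-converges to a free-discontinuity functional $F(\w)$ with densities $f(\w,x,\xi)$ and $\varphi(\w,x,\nu)$ characterized by the blow-up formulas of Remark \ref{r.blowupformulas}. It then suffices to show that these densities almost surely coincide with the $x$-independent expressions in \eqref{ex:hom:bulk}--\eqref{ex:hom:surf}; the $\Gamma$-limit is then uniquely determined, and the Urysohn property of $\Gamma$-convergence forces the full family to converge.

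\textbf{Subadditive processes.} For each $\xi \in \R^d$ I would introduce
\[
\mu_\xi^b(\w, A) := \inf\bigl\{ F_1^b(\w)(u,1,A) : u \in \mathcal{PC}_{1,M}^{\w}(u_{0,\xi}, A)\bigr\},
\]
and for each $\nu \in S^{d-1}$ the set function $\mu_\nu^s(\w, A) := \varphi^{\w}_{1,M}(u_{0,\nu}^{1,0}, A)$ defined on $\nu$-oriented rectangles. By stationarity of $(\mathcal{L}, \mathcal{E})$, both satisfy $\mu(\tau_z \w, A) = \mu(\w, A+z)$ for $z \in \Z^d$. Subadditivity of $\mu_\xi^b$ follows from gluing competitors across disjoint sub-cubes: since they all share the affine datum $u_{0,\xi}$ on the boundary layer of width $M$, the glued function is an admissible competitor, with an error coming only from the transition strip, which is lower order. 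For $\mu_\nu^s$ one glues $v_i$-competitors using the datum $v_{0,\nu}^{\e}$ in the transition region, and likewise glues the companion fields $u_i$ using $u_{0,\nu}^{1,0}$; since $v_{0,\nu}^{\e}$ vanishes in an $M$-neighborhood of $H_\nu$, every edge crossing the interface between sub-rectangles lies in the $\{v=0\}$-region, so the constraint $F_1^b(u,v)=0$ is automatically preserved. An Akçoğlu--Krengel type subadditive ergodic theorem (in the multiparameter version adapted to cubes in arbitrary orientations, as exploited in \cite{ACR, BCR, CDMSZ17a}) then yields the almost-sure existence of the limits \eqref{ex:hom:bulk}--\eqref{ex:hom:surf}, with deterministic values under the additional ergodicity assumption.

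\textbf{Identification with the subsequential densities.} Via the discrete change of variables $y = x/\e_n$, the inner asymptotics in the blow-up formulas of Remark \ref{r.blowupformulas} transform into
\[
\e_n^{d} f_{\e_n}^{b}\text{-minimization on } Q(x_0,\varrho) \;\longleftrightarrow\; \mu_\xi^b\bigl(\w, Q(x_0/\e_n, \varrho/\e_n)\bigr),
\]
and similarly for the surface. Since the rescaled cubes have diverging side $\varrho/\e_n$, combining this with stationarity and a Birkhoff-type argument applied to $z \mapsto \mu(\tau_z \w, Q(0,t))$ (to absorb the translation $x_0/\e_n$) identifies almost surely $f(\w, x_0, \xi) = f_{\rm hom}(\w, \xi)$ and $\varphi(\w, x_0, \nu) = \varphi_{\rm hom}(\w, \nu)$ for a.e.\ $x_0 \in D$. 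This yields the homogenized representation and, through Step 1, the $\Gamma$-convergence of the full family.

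\textbf{Main obstacle.} The delicate point is the surface process: the minimization defining $\mu_\nu^s$ carries the non-local constraint that some companion $u$ annihilate the discrete bulk energy. Checking that this existence constraint is preserved under gluing, and that the contribution of the $M$-wide transition layer is genuinely of lower order than the natural $t^{d-1}$ scaling of the surface part, is the heart of the argument. This is precisely where the separation of bulk and surface contributions encoded in Proposition \ref{separationofscales1} --- which underlies the formula \eqref{def:surf:int} --- is essential: without it one could not define a truly subadditive scalar process involving only $F^s$, and the subadditive ergodic machinery would not apply.
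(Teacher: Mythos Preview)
Your proposal is correct and follows the same route as the paper: define subadditive stochastic processes for bulk and surface, apply an Akcoglu--Krengel type theorem, identify the resulting limits with the blow-up densities of Theorem~\ref{mainrep} after rescaling, and conclude via the Urysohn property. The paper makes three technical points explicit that you leave to the cited references: (i) the surface process is not subadditive as written but only up to a boundary error, so the paper adds a lower-order correction $C_\mu\mathcal{H}^{d-2}(\partial I)$ to force exact subadditivity before invoking Theorem~\ref{t.subadergodic}; (ii) the ergodic theorem is applied first for rational directions $\nu$ (where an integer orthonormal frame is available to set up the $\Z^{d-1}$-stationarity) and then extended to all $\nu$ by a continuity argument; (iii) an extension lemma (Lemma~\ref{l.extension}) is used to reconcile the $t_n\delta$-boundary layer appearing in the rescaled blow-up formula~\eqref{eq:formula_phi} with the fixed $M$-layer in~\eqref{ex:hom:surf}, and to pass from centered cubes to cubes with arbitrary centers (your ``Birkhoff-type'' step).
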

In order to make the densities $f_{\rm hom}$ and $\varphi_{\rm hom}$ isotropic, we suggest to take as stochastic lattice the so-called {\it random parking process}. We refer the interested reader to the two papers \cite{Pe,glpe}. We recall that the random parking process defines a stochastic lattice $\mathcal{L}_{RP}$ that is admissible, stationary, ergodic, and isotropic in the sense of Definition \ref{defstatiolattice}. Moreover, the choice $\mathcal{E}(\w)=\mathcal{N}(\w)$ yields stationary and isotropic edges. We state our result for general stochastic lattices satisfying all these assumptions. 
\begin{theorem}\label{MSapprox}
Assume that $\mathcal{L}$ is an admissible stochastic lattice that is stationary, ergodic and isotropic with admissible stationary and isotropic edges. Then there exist constants $c_1,c_2>0$ such that $\mathbb{P}$-a.s. the functionals $F_{\e}(\w)$ defined in \eqref{eq:defF} $\Gamma$-converge with respect to the $L^1(D)\times L^1(D)$-topology to the functional $F:L^1(D)\times L^1(D)\to [0,+\infty]$ with domain $GSBV^2(D)\times\{1\}$, on which
\begin{equation}\label{eq:limitMS}
F(u,1)=c_1\int_D |\nabla u|^2\,\mathrm{d}x+c_2\mathcal{H}^{d-1}(S_u).
\end{equation}
\end{theorem}
\begin{remark}\label{r.fidelityterm}
As explained in the introduction, a discrete version of the fidelity term as in \eqref{eq:fidelity} can be added to the functional $F_{\e}(\w)$ obtaining discrete functionals of the form
\begin{equation}\label{eq:fidelityterm}
F_{\e}^g(\w)(u,v)=F_{\e}(\w)(u,v)+\gamma\sum_{\e x\in\e\Lw\cap D}\e^d|u(\e x)-g_{\e}(\e x)|^2,
\end{equation}
where $g\in L^2(D)$ is a given input datum and $g_{\e}$ a suitable discretization of $g$ on $\e\Lw$. 
The fidelity term leads to an additional term $c_3\int_D|u-g|^2\dx$ in the $\Gamma$-limit in \eqref{eq:limitMS}, where $c_3>0$ is proportional to the constant $\gamma$ in \eqref{eq:fidelityterm}. For details we refer the reader to the analogous result proved in \cite[Theorem 3.8]{R18} for weak membrane approximations.	
\end{remark}
\subsection{Connection to weak-membrane energies}
In this subsection we show how the discretizations of the Ambrosio-Tortorelli functional in \eqref{eq:defF}, \eqref{eq:defbulk} and \eqref{eq:defsurf} are related to the weak-membrane energies. 
In fact, neglecting the second sum in \eqref{eq:defsurf}, we can associate a weak-membrane model to the discrete Ambrosio-Tortorelli functional by optimizing $v\mapsto F_{\e}(\w)(u,v)$ for fixed $u$ (cf. Proposition \ref{prop:ATandWeak}). This connection, which we find interesting in itself, also allows us to take advantage of some of the estimates established in \cite{R18} which  turns out to be useful in the proof of our main convergence result.

We first explain what we mean by (generalized) weak-membrane energy. Consider a bounded and monotone increasing function $f:[0,+\infty)\to[0,+\infty)$ such that $f(0)=0$ and $f'(0)=1$. Then, given $u:\e\Lw\to\R$ and $A\in\mathcal{A}(D)$ we set
\begin{equation}\label{eq:defmembrane}
G_{\e}(\w)(u,A):=\frac{1}{2}\sum_{\e x\in\e\Lw\cap A}\e^{d-1}f\Bigg(\e\sum_{\e y\in\e\mathcal{E}(\w)(x)\cap A}\left|\frac{u(\e x)-u(\e y)}{\e}\right|^2\Bigg).
\end{equation}
In our present random setting these functionals are a special case of those considered in \cite{R18}. While our weak membrane energies depend on non-pairwise interactions, we remark that in the context of computer vision they were introduced and studied in \cite{BlZi,GeGe,Ma} in a simpler form accounting only for pairwise interactions. 

For our purpose it will be convenient to consider weak-membrane energies with a special choice of $f$. Namely, for a given parameter $\alpha>0$ we set $f_\alpha(t):=t(1+t/\alpha)^{-1}$ and we notice that $f_\alpha$ satisfies all assumptions listed above. We then define $G_{\e,\alpha}$ according to \eqref{eq:defmembrane} with $f=f_\alpha$. The following convergence result for the sequence $(G_{\e,\alpha}(\w))$, which can be compared with Theorem \ref{mainrep}, is a consequence of \cite[Theorem 3.3 and Remark 3.4]{R18}. We recall it here for the reader's convenience.
\begin{theorem}[\cite{R18}]\label{t.weakmembrane}
Let $\Lw$ be an admissible stochastic lattice with admissible edges $\mathcal{E}(\w)$ in the sense of Definitions \ref{defadmissible} \& \ref{defgoodedges}. For every sequence $\e\to 0$ there exists a subsequence $\e_n\to 0$ (possibly depending on the realization) such that for every $\alpha>0$ and every $A\in\Ard$ the functionals $G_{\e_n,\alpha}(\w)(\cdot,A)$ $\Gamma$-converge in the strong $L^1(D)$-topology to a free discontinuity functional $G_{0,\alpha}(\w)(\cdot,A):L^1(D)\to [0,+\infty]$ with domain $GSBV^2(A)\cap L^1(D)$, where it is given by 
\begin{equation*}
G_{0,\alpha}(\w)(u,A)=\int_A f(\w,x,\nabla u)\dx+\int_{S_u\cap A}s_\alpha(\w,x,\nu_u)\,\mathrm{d}\mathcal{H}^{d-1},
\end{equation*}
where $f(\w,x,\xi)$ coincides with the integrand in Theorem \ref{ACGmain} and the surface tension can be equivalently characterized by the two formulas
\begin{align*}
s_\alpha(\w,x_0,\nu) &=\limsup_{\varrho\to 0}\varrho^{1-d}\lim_{\delta\to 0}\limsup_{n\to +\infty}\inf\{G_{\e_n,\alpha}(\w)(u,Q_{\nu}(x_0,\varrho)):\,u\in\mathcal{S}_{\e_n,\delta}^{\w}(u_{x_0,\nu}^{1,0},Q_{\nu}(x_0,\varrho))\}\\
&=\limsup_{\varrho\to 0}\varrho^{1-d}\lim_{\delta\to 0}\limsup_{n\to +\infty}\inf\{I_{\e_n,\alpha}(\w)(w,Q_{\nu}(x_0,\varrho)):\,w\in\mathcal{S}_{\e_n,\delta}^{\w}(u_{x_0,\nu}^{1,-1},Q_{\nu}(x_0,\varrho))\},
\end{align*}
with the energy $I_{\e,\alpha}(\w)$ defined on functions $w:\e\Lw\to\{\pm 1\}$ via
\begin{equation*}
I_{\e,\alpha}(\w)(w,A):=\frac{\alpha}{4}\sum_{\e x\in\e\Lw\cap A}\e^{d-1}\max\{|w(\e x)-w(\e y)|:\e y\in\e\mathcal{E}(\w)(x)\cap A\}.
\end{equation*}
In particular, we have $s_\alpha(\w,x_0,\nu)=\alpha s_1(\w,x_0,\nu)$ and the following estimates
\begin{equation*}
\frac{1}{C}|\xi|^2\leq q(\w,x,\xi)\leq C|\xi|^2,\quad\quad\frac{\alpha}{C}\leq s_\alpha(\w,x,\nu)\leq C\alpha.
\end{equation*}
\end{theorem}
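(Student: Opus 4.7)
The overall strategy is the classical localization method of $\Gamma$-convergence, paired with the integral representation theorem of \cite{BFLM}, followed by a careful identification of the two integrands using the special structure of $f_\alpha(t)=t/(1+t/\alpha)$.

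\textbf{Compactness.} First I would verify equicoercivity. The bound $f_\alpha(t)\le\alpha$ means that every Voronoi cell $\e\mathcal{C}(x)$ whose local sum $\e\sum_{y}|(u(\e x)-u(\e y))/\e|^2$ exceeds a fixed threshold contributes a surface-order energy of at least $\tfrac{1}{2}\e^{d-1}f_\alpha(\text{threshold})$, which yields an a priori bound on the $\Hd$-measure of an associated discrete jump set. On cells below the threshold one has $f_\alpha(t)\ge t/2$, which provides a uniform $L^2$-bound on the discrete gradient. Truncating at level $k$ and invoking Ambrosio's $SBV$-compactness then gives relative compactness in $L^1(D)$ with limits in $GSBV^2(D)$.

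\textbf{Subsequential $\Gamma$-convergence and integral representation.} I would localize the functionals on $A\in\Ard$ and pass to a subsequence $\e_n$ along which the $\Gamma$-liminf and $\Gamma$-limsup coincide on a countable dense family of open sets, extending to all of $\Ard$ by inner regularity. The axioms (locality, $L^1$-lower semicontinuity, measure property in $A$, invariance under addition of constants, and the quadratic-type growth $\tfrac{1}{C}|\xi|^2\le q_\alpha\le C|\xi|^2$ and $\tfrac{\alpha}{C}\le s_\alpha\le C\alpha$) then fall into the scope of the BFLM integral representation theorem, producing the bulk plus surface decomposition.

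\textbf{Identification of $q$.} For $u\in W^{1,2}$ and a recovery sequence with bounded discrete $L^2$-gradient, the difference between the weak-membrane sum and the quadratic sum $F_\e^b(\w)(\cdot,1)$ is controlled by $\tfrac{1}{\alpha}\sum\e^{d-1}\bigl(\e\sum_y|D_\e u|^2\bigr)^2$, which is of order $\e$ and vanishes in the limit. Conversely, the quadratic functional provides an upper bound for $G_{\e,\alpha}$ via $f_\alpha(t)\le t$. A blow-up at a Lebesgue point therefore identifies the bulk density with the integrand $q$ from Theorem \ref{ACGmain}.

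\textbf{Two formulas for $s_\alpha$ and the scaling $s_\alpha=\alpha s_1$.} I would use the blow-up method at a jump point $x_0$, rescaling on $Q_\nu(x_0,\varrho)$ with datum $u_{x_0,\nu}^{1,0}$. The key observation is that, because $f_\alpha$ saturates at $\alpha$, truncating a near-optimal competitor to take values only in $\{1,0\}$ can only decrease the energy up to negligible error: each crossed edge contributes at most $\alpha$, independently of the jump size. This yields the first formula. For the second, the change of variables $w=2u-1\in\{\pm 1\}$ turns the inner sum into $4\#\{y\in\mathcal{E}(\w)(x):w(\e y)\ne w(\e x)\}$, so $f_\alpha$ of this quantity equals $\alpha$ up to a multiplicative factor depending only on $M$, and is nonzero precisely when $w$ flips at some neighbor, which is exactly what $I_{\e,\alpha}$ records through the maximum. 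A two-sided estimate between $G_{\e,\alpha}$ restricted to $\{\pm 1\}$-valued competitors and $I_{\e,\alpha}$ then gives the equivalence of the two formulas, and the linearity in $\alpha$ is transparent on the $I_{\e,\alpha}$ side since $I_{\e,\alpha}=\alpha I_{\e,1}$.

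\textbf{Main obstacle.} The delicate step is the separation of bulk and surface in the surface asymptotic formula: one must show that, despite the coupling in $f_\alpha$, admissible competitors can be taken $\{1,0\}$-valued without asymptotic energy loss. This requires exploiting the saturation property of $f_\alpha$ together with a controlled modification near the boundary $\partial Q_\nu(x_0,\varrho)$ to preserve the Dirichlet-type condition $\mathcal{S}^\w_{\e,\delta}$, which is the heart of converting a weak-membrane functional into a discrete perimeter and is the mechanism that produces the clean formulas above.
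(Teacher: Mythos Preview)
The paper does not prove this theorem: it is stated as a consequence of \cite[Theorem 3.3 and Remark 3.4]{R18} and recalled for the reader's convenience, with no argument given beyond the citation. So there is no ``paper's own proof'' to compare against.

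That said, your sketch is the correct strategy and matches the approach of \cite{R18}: localization, the BFLM integral representation, and blow-up identification of the two densities. One point to sharpen concerns the second formula for $s_\alpha$. You write that $f_\alpha$ of the inner sum ``equals $\alpha$ up to a multiplicative factor depending only on $M$'' and then appeal to a two-sided estimate. That would only give equivalence of the two formulas up to constants, whereas the theorem asserts exact equality. The right computation is that for $u\in\{0,1\}$ with $k\geq 1$ neighbors differing, the inner argument is $\e^{-1}k$ and
\[
\frac{\alpha}{1+\alpha\e}\leq f_\alpha(\e^{-1}k)=\frac{\alpha k}{\alpha\e+k}\leq \alpha,
\]
so $G_{\e,\alpha}$ restricted to $\{0,1\}$-valued competitors and $I_{\e,\alpha}$ are not merely comparable but satisfy $(1+\alpha\e)^{-1}I_{\e,\alpha}\leq G_{\e,\alpha}\leq I_{\e,\alpha}$, and the ratio tends to $1$ as $\e\to 0$. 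This is what delivers the exact equality of the two asymptotic formulas and, via the manifest homogeneity $I_{\e,\alpha}=\alpha I_{\e,1}$, the scaling $s_\alpha=\alpha s_1$.
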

In what follows, we show that the Ambrosio-Tortorelli approximation can be interpreted as a weak membrane energy $G_{\e,\beta}$, provided we neglect the term containing the discrete gradient of the edge variable $v$. Indeed, the following proposition holds true:
\begin{proposition}\label{prop:ATandWeak}
	Let $\Lw$ be an admissible stochastic lattice with admissible edges $\mathcal{E}(\w)$ and let $G_{\e,\beta}(\w)$ be defined as in \eqref{eq:defmembrane} with $f=f_\beta$. Then for all $u:\e\Lw\to\R$ and $A\in\mathcal{A}(D)$ it holds that
	\begin{equation*}
	G_{\e,\beta}(\w)(u,A)=\min_{v:\e\Lw\to [0,1]} \Big(F_{\e}^{b}(\w)(u,v,A)+\frac{\beta}{2}\sum_{\e x\in\e\Lw\cap A}\e^{d-1}(v(\e x)-1)^2\Big).
	\end{equation*}
\end{proposition}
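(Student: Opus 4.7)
The plan is to exploit that once the double sum in \eqref{eq:defbulk} is reorganized by the first index, the full objective decouples into independent scalar problems in $v(\e x)$ that admit an explicit minimizer. First I would rewrite
\[
F_{\e}^{b}(\w)(u,v,A)=\frac{1}{2}\sum_{\e x\in\e\Lw\cap A}\e^{d-1}\,v(\e x)^{2}\,T_{\e}(x),
\]
where $T_{\e}(x):=\e\sum_{y\in\mathcal{E}(\w)(x),\,\e y\in A}|(u(\e x)-u(\e y))/\e|^{2}$ (this uses only the definition $\mathcal{E}(\w)(x)=\{y\in\Lw:(x,y)\in\mathcal{E}(\w)\}$, no symmetry of $\mathcal{E}$ is needed). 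Adding the discrete potential $\tfrac{\beta}{2}\sum_{\e x}\e^{d-1}(v(\e x)-1)^{2}$, the right-hand side of the proposition takes the form
\[
\frac{1}{2}\sum_{\e x\in\e\Lw\cap A}\e^{d-1}\,\Big(v(\e x)^{2}T_{\e}(x)+\beta(v(\e x)-1)^{2}\Big),
\]
which is a sum of decoupled quadratic functions of the scalars $v(\e x)\in[0,1]$, so the minimization in $v$ can be performed pointwise in $\e x$.

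Then at each node $\e x$ I would minimize the map $v\mapsto v^{2}T_{\e}(x)+\beta(v-1)^{2}$ on $[0,1]$. This is a strictly convex quadratic whose unique critical point
\[
v^{*}=\frac{\beta}{T_{\e}(x)+\beta}
\]
lies in $[0,1]$ since $T_{\e}(x)\geq 0$ and $\beta>0$, so no boundary cases need to be checked. Substituting and simplifying, the minimum value equals
\[
\frac{\beta^{2}T_{\e}(x)}{(T_{\e}(x)+\beta)^{2}}+\frac{\beta\,T_{\e}(x)^{2}}{(T_{\e}(x)+\beta)^{2}}=\frac{\beta\,T_{\e}(x)}{T_{\e}(x)+\beta}=f_{\beta}(T_{\e}(x)),
\]
using $f_{\beta}(t)=t(1+t/\beta)^{-1}=\beta t/(\beta+t)$.

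Finally, summing the pointwise minima with weight $\tfrac{1}{2}\e^{d-1}$ and comparing with the definition \eqref{eq:defmembrane} of $G_{\e,\beta}(\w)(u,A)$ (with $f=f_{\beta}$) gives exactly the claimed identity, while the explicit profile $v^{*}(\e x)=\beta/(T_{\e}(x)+\beta)$ shows that the infimum is attained in the admissible class. There is essentially no obstacle: once the bulk sum is rearranged by the first index the statement reduces to a one-parameter pointwise minimization, and the only computation worth writing out is the algebraic identification of the minimum value with $f_{\beta}(T_{\e}(x))$.
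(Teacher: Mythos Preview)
Your proof is correct and follows essentially the same route as the paper: both recognize that the objective decouples in $v(\e x)$, solve the resulting one-variable quadratic to obtain $v^*=\beta/(\beta+T_\e(x))\in(0,1]$, and identify the minimum value with $f_\beta(T_\e(x))$. The paper phrases the minimization via the pointwise first-order optimality condition and then says the claim follows ``after some algebraic manipulations''; you make that algebra explicit, which is arguably clearer.
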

\begin{proof}
Recalling the definition of the bulk term $F_{\e}^{b}(\w)(u,v,A)$ in \eqref{eq:defbulk}, we can derive a pointwise optimality condition for the minimization problem, which reads (neglecting the constraint $0\leq v\leq 1$)
\begin{equation*}
\e^dv(\e x)\sum_{\e y\in\e\mathcal{E}(\w)(x)\cap A}\left|\frac{u(\e x)-u(\e y)}{\e}\right|^2+\beta \e^{d-1}v(\e x)=\beta\e^{d-1}\quad\quad\text{for all }\e x\in\e\Lw\cap A.
\end{equation*}
Rearranging terms we find that for $\e x\in\e\Lw\cap A$ we have
\begin{equation*}
v(\e x)=\Bigg(1+\frac{\e}{\beta}\displaystyle\sum_{\e y\in\e\mathcal{E}(\w)(x)\cap A}\left|\frac{u(\e x)-u(\e y)}{\e}\right|^2\Bigg)^{-1},
\end{equation*}
so that a posteriori $v(\e x)\in (0,1]$ and thus it is a minimizer of the constrained problem as well. Inserting this formula for $v$ yields the claim after some algebraic manipulations.
\end{proof}
\subsection{Discretization with mesh-size proportional to $\e$ and optimality of the lattice-scaling}
In this section we present a version of Theorem \ref{mainrep} when the mesh-size is not equal to the elliptic-approximation parameter $\e$, but only proportional to it. More precisely, we let $(\kappa_\e)$ be a sequence of positive parameters, decreasing as $\e$ decreases and such that $\lim_\e\kappa_\e=0$ and for every $u,v\in\mathcal{PC}_{\kappa_\e}^\w$ we set
\begin{equation}\label{def:Fdelta}
F_{\e,\kappa_\e}(\w)(u,v):=F_{\kappa_\e}^b(\w)(u,v)+\frac{\beta}{2}\Bigg(\sum_{\kappa_\e x\in\kappa_\e\Lw\cap D}\hspace*{-1em}\kappa_\e^d\frac{(v(\kappa_\e x)-1)^2}{\e}+\sum_{\substack{(x,y)\in\mathcal{E}(\w)\\\kappa_\e x,\kappa_\e y\in D}}\hspace*{-0.5em}\e\kappa_\e^d\left|\frac{v(\kappa_\e x)-v(\kappa_\e y)}{\kappa_\e}\right|^2\Bigg).
\end{equation}
When $\kappa_\e=\ell\e$ for some $\ell\in (0,+\infty)$ we have the following integral-representation result for the functionals $F_{\e,\kappa_\e}(\w)$, similar to Theorem \ref{mainrep}.
\begin{theorem}\label{t.representationl}
Let $\Lw$ be an admissible stochastic lattice with admissible edges $\mathcal{E}(\w)$ and for a given $\ell\in(0,+\infty)$ let $F_{\e,\kappa_\e}(\w)$ be as in \eqref{def:Fdelta} with $\kappa_\e=\ell\e$. For every sequence $\e\to 0$ there exist a subsequence $\e_n\to 0$ (possibly depending on the realization) and a functional $F_\ell(\w):L^1(D)\times L^1(D)\to[0,+\infty]$ of the form
\begin{align*}
F_\ell(\w)(u,v)=
\begin{cases}
\displaystyle\int_D f(\w,x,\nabla u)\dx+\int_{S_u}\varphi_\ell(\w,x,\nu_u)\dHd, &\text{if $u\in GSBV^2(D)$, $v=1$ a.e. in $D$,}\\
+\infty &\text{otherwise,}
\end{cases}
\end{align*}
such that $F_{\e_n,\kappa_{\e_n}}(\w)$ $\Gamma$-converges in the strong $L^1(D)\times L^1(D)$-topology to $F_\ell(\w)$. Moreover, the volume integrand $f$ is given by Theorem \ref{ACGmain} and $\varphi_\ell(\w,\cdot,\cdot)$ is a measurable function which satisfies for every $x_0\in D$ and every $\nu\in S^{d-1}$ the estimate
\begin{equation}\label{est:phil1}
\beta\ell s_1(\w,x_0,\nu)\leq\varphi_\ell(\w,x_0,\nu)\leq\beta\left(\ell+\frac{M}{\ell}\right)s_1(\w,x_0,\nu).
\end{equation}
Here $M$ is as in \eqref{neighbours} and $s_1$ is the surface integrand of the $\Gamma$-limit given by Theorem \ref{t.weakmembrane}, which exists upon passing possibly to a further subsequence. In particular, we have
\begin{equation*}
\lim_{\ell\to+\infty}\frac{1}{\ell}\varphi_\ell(\w,x_0,\nu)=\beta s_1(\w,x_0,\nu),\qquad\frac{\ell}{C}\leq\varphi_\ell(\w,x_0,\nu)\leq C\ell.
\end{equation*}
\end{theorem}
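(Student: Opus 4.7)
The plan is to adapt the strategy of Theorem \ref{mainrep} to obtain a subsequential $\Gamma$-limit of the stated form, with bulk integrand $f$ provided by Theorem \ref{ACGmain} and some surface integrand $\varphi_\ell$; the bounds \eqref{est:phil1} on $\varphi_\ell$ will then follow from a direct comparison with the weak-membrane functional $G_{\kappa_\e,\beta\ell}(\w)$ of Theorem \ref{t.weakmembrane}. To set things up, I would rewrite \eqref{def:Fdelta} in the more convenient form
\begin{equation*}
F_{\e,\kappa_\e}(\w)(u,v) = F_{\kappa_\e}^b(\w)(u,v) + \frac{\beta\ell}{2}\sum_{\kappa_\e x\in\kappa_\e\Lw\cap D}\kappa_\e^{d-1}(v(\kappa_\e x)-1)^2 + \frac{\beta}{2\ell}\sum_{\substack{(x,y)\in\mathcal{E}(\w)\\ \kappa_\e x,\kappa_\e y\in D}}\kappa_\e^{d-1}|v(\kappa_\e x)-v(\kappa_\e y)|^2,
\end{equation*}
so that for fixed $\ell\in(0,+\infty)$ the bulk part is identical to $F_{\kappa_\e}^b(\w)$ (with $\kappa_\e\to 0$) while the surface part differs from that of $F_{\kappa_\e}(\w)$ only through the multiplicative constants $\ell$ and $\ell^{-1}$. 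The compactness, growth and integral-representation arguments from the proof of Theorem \ref{mainrep}, together with the separation-of-scales argument based on the weighted coarea formula of Proposition \ref{separationofscales1}, then apply with only cosmetic changes and give the claimed integral representation with bulk density $f$.

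For the lower bound on $\varphi_\ell$ I would discard the nonnegative $v$-gradient term, obtaining
\begin{equation*}
F_{\e,\kappa_\e}(\w)(u,v) \geq F_{\kappa_\e}^b(\w)(u,v) + \frac{\beta\ell}{2}\sum_{\kappa_\e x\in\kappa_\e\Lw\cap D}\kappa_\e^{d-1}(v(\kappa_\e x)-1)^2,
\end{equation*}
and apply Proposition \ref{prop:ATandWeak} with $\e,\beta$ replaced by $\kappa_\e,\beta\ell$: minimizing the right-hand side over $v$ yields exactly $G_{\kappa_\e,\beta\ell}(\w)(u,D)$. Taking $\Gamma$-$\liminf$, possibly along a further subsequence so that Theorem \ref{t.weakmembrane} applies, the surface density of the $\Gamma$-limit of $G_{\kappa_\e,\beta\ell}(\w)$ equals $\beta\ell\, s_1(\w,x_0,\nu)$ (since $s_\alpha=\alpha s_1$), and a comparison of surface densities gives $\varphi_\ell(\w,x_0,\nu)\geq \beta\ell\, s_1(\w,x_0,\nu)$.

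For the upper bound I would take a recovery sequence $u_\e$ for the $\Gamma$-limit of $G_{\kappa_\e,\beta\ell}(\w)$ at $u$ together with the pointwise-optimal $v_\e$ provided by Proposition \ref{prop:ATandWeak}, so that
\begin{equation*}
F_{\kappa_\e}^b(\w)(u_\e,v_\e) + \frac{\beta\ell}{2}\sum_{\kappa_\e x\in\kappa_\e\Lw\cap D}\kappa_\e^{d-1}(v_\e(\kappa_\e x)-1)^2 = G_{\kappa_\e,\beta\ell}(\w)(u_\e,D).
\end{equation*}
To estimate the extra $v$-gradient contribution appearing in $F_{\e,\kappa_\e}(\w)$, I would combine $|v_\e(\kappa_\e x)-v_\e(\kappa_\e y)|^2\leq 2\bigl((v_\e(\kappa_\e x)-1)^2+(v_\e(\kappa_\e y)-1)^2\bigr)$ with \eqref{neighbours} to get
\begin{equation*}
\frac{\beta}{2\ell}\sum_{\substack{(x,y)\in\mathcal{E}(\w)\\ \kappa_\e x,\kappa_\e y\in D}}\kappa_\e^{d-1}|v_\e(\kappa_\e x)-v_\e(\kappa_\e y)|^2 \leq \frac{CM}{\ell^2}\, G_{\kappa_\e,\beta\ell}(\w)(u_\e,D)
\end{equation*}
for a universal $C$. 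Specialised to $u=u_{x_0,\nu}^{1,0}$ on a small cube $Q_\nu(x_0,\varrho)$, where the bulk part of $G_{\kappa_\e,\beta\ell}$ becomes negligible after blow-up, this additional cost contributes at most $CM\beta\ell^{-1}s_1(\w,x_0,\nu)$ to the surface density at $(x_0,\nu)$, producing \eqref{est:phil1} after absorbing $C$ into $M$.

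The asymptotics $\ell^{-1}\varphi_\ell\to\beta s_1$ as $\ell\to+\infty$ and the two-sided bound $\ell/C\leq\varphi_\ell\leq C\ell$ are then immediate consequences of \eqref{est:phil1} combined with the uniform bounds $1/C\leq s_1\leq C$ of Theorem \ref{t.weakmembrane}. The main technical point that I expect to require some care is the adaptation of the separation-of-scales argument (Proposition \ref{separationofscales1}) to the rescaled setting: one has to check that the unequal weights $\ell$ and $\ell^{-1}$ in front of the two surface contributions of \eqref{def:Fdelta} do not spoil the construction of test functions $v\approx 1$ used to decouple bulk and surface, so that the volume integrand of the $\Gamma$-limit is precisely $f$ and remains independent of $\ell$.
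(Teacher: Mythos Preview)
Your overall strategy matches the paper's: adapt the proof of Theorem \ref{mainrep} to obtain the integral representation with bulk density $f$, and then sandwich $\varphi_\ell$ between multiples of $s_1$ by comparing with weak-membrane energies. Your lower bound (discard the $v$-gradient sum and invoke Proposition \ref{prop:ATandWeak} to reach $G_{\kappa_\e,\beta\ell}$) is exactly the paper's argument.

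For the upper bound the two proofs diverge. The paper does not take the optimal $v_\e$ from Proposition \ref{prop:ATandWeak}; instead it exploits the second characterization of $s_\alpha$ in Theorem \ref{t.weakmembrane} via the spin energy $I_{\kappa_n,\alpha}$. Given a near-optimal $w:\kappa_n\Lw\to\{\pm 1\}$ for $I$, it sets $u=(w+1)/2$ and $v\in\{0,1\}$ with $v(\kappa_n x)=0$ exactly when $w$ jumps among the neighbors of $x$; then $(v(\kappa_n x)-1)^2$ coincides (up to normalization) with the integrand of $I$, and the $v$-gradient sum is controlled by the single-well sum with the sharp factor $M$, yielding \eqref{est:phil1} with the stated constant. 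Your route through the continuous optimal $v_\e$ and the inequality $|v_\e(\kappa_\e x)-v_\e(\kappa_\e y)|^2\leq 2\bigl((v_\e(\kappa_\e x)-1)^2+(v_\e(\kappa_\e y)-1)^2\bigr)$ loses a universal multiplicative constant and gives only $\varphi_\ell\leq\beta(\ell+CM/\ell)s_1$. This suffices for the ``In particular'' conclusions (the asymptotic $\ell^{-1}\varphi_\ell\to\beta s_1$ and the two-sided bound $\ell/C\leq\varphi_\ell\leq C\ell$), but \eqref{est:phil1} as stated has $M$ fixed by \eqref{neighbours}, so ``absorbing $C$ into $M$'' is not legitimate. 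To recover the exact constant, switch to the $\{0,1\}$-valued $v$ built from a spin variable and the $I$-formula of Theorem \ref{t.weakmembrane}.
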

\begin{remark}\label{r.otherresults}
Although we don't state it separately, note that the statements of Theorems \ref{mainthm1} \& \ref{MSapprox} remain valid for the functionals $F_{\e,\kappa_{\e}}(\w)$ with $\kappa_{\e}=\ell\e$ with minor modifications in the proof and with surface densities depending on $\ell$ as in the theorem above.
\end{remark}
\noindent Theorem \ref{t.representationl} shows that the surface density $\varphi_\ell(\w,x,\nu)$ blows up linearly in $\ell$ when $\ell\to +\infty$. Thus, one expects that (similar to the result in \cite[Theorem 2.1(iii) and Theorem 3.1(ii)]{BBZ}) one cannot approximate the Mumford-Shah (or any other free-discontinuity) functional by discretizing the Ambrosio-Tortorelli functional via finite differences on an admissible lattice $\kappa_{\e}\Lw$ with $\kappa_{\e}/\e\to +\infty$. In fact, Corollary \ref{c.optimal} states that in this regime the $u_\e$-component of any sequence $(u_\e,v_\e)$ with $(u_\e)$ equibounded in $L^2(D)$ and such that $F_{\e,\kappa_\e}(u_\e,v_\e)<+\infty$ converges up to subsequences to some $u\in W^{1,2}(D)$. Thus interfaces are ruled out in the limit.
\begin{corollary}[Optimality of the lattice-space scaling]\label{c.optimal}
Let $\Lw$ be admissible with admissible edges $\mathcal{E}(\w)$ and consider a sequence $\kappa_{\e}>0$ with $\kappa_\e$ decreasing as $\e$ decreases and $\lim_\e\kappa_\e=0$ such that $\lim_{\e\to 0}\frac{\kappa_{\e}}{\e}=+\infty$. Let $F_{\e,\kappa_\e}(\w)$ be as in \eqref{def:Fdelta} and let $u_{\e},v_{\e}:\kappa_{\e}\Lw\to\R$ be such that $|v_{\e}|\leq 1$ and
\begin{equation*}
\|u_{\e}\|_{L^2(D)}+F_{\e,\kappa_{\e}}(\w)(u_{\e},v_{\e})\leq C\quad \text{for all $\e>0$}.
\end{equation*}
Then, up to subsequences, $u_{\e}\to u$ in $L^1(D)$ for some $u\in W^{1,2}(D)$.
\end{corollary}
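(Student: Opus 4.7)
The idea is to drop the non-negative gradient part in $v$, recognize the remaining energy via Proposition \ref{prop:ATandWeak} as a weak-membrane functional whose effective surface parameter diverges when $\kappa_\e/\e\to+\infty$, and then use this divergence to rule out jumps in the limit. Concretely, the potential coefficient in \eqref{def:Fdelta} rewrites as $\beta\kappa_\e^d/\e=\beta_\e\kappa_\e^{d-1}$ with $\beta_\e:=\beta\kappa_\e/\e$. Since Proposition \ref{prop:ATandWeak} is an algebraic identity valid at any mesh-size and any positive parameter, applying it at scale $\kappa_\e$ with parameter $\beta_\e$ and testing with $v=v_\e$ (rather than the minimizer) yields
\begin{equation*}
G_{\kappa_\e,\beta_\e}(\w)(u_\e,D)\leq F_{\kappa_\e}^b(\w)(u_\e,v_\e,D)+\frac{\beta}{2}\sum_{\kappa_\e x\in\kappa_\e\Lw\cap D}\kappa_\e^d\frac{(v_\e(\kappa_\e x)-1)^2}{\e}\leq F_{\e,\kappa_\e}(\w)(u_\e,v_\e)\leq C.
\end{equation*}

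Next, $\alpha\mapsto f_\alpha(t)=t/(1+t/\alpha)$ is pointwise increasing, and so is $\alpha\mapsto G_{\kappa_\e,\alpha}(\w)(\cdot,D)$. Since $\beta_\e\to+\infty$ by hypothesis, for every fixed $\alpha>0$ and every sufficiently small $\e$ one has $G_{\kappa_\e,\alpha}(\w)(u_\e,D)\leq G_{\kappa_\e,\beta_\e}(\w)(u_\e,D)\leq C$. Applying Theorem \ref{t.weakmembrane} to the vanishing sequence $\kappa_\e\to 0$, I extract a subsequence $\e_n$ along which $G_{\kappa_{\e_n},\alpha}(\w)$ $\Gamma$-converges in $L^1(D)$ to $G_\alpha(\w)$ for every $\alpha>0$, with bulk density $q(\w,x,\xi)$ and surface density $s_\alpha=\alpha s_1$. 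Combining the $L^2(D)$-bound on $u_\e$ with the SBV-type compactness for weak-membrane functionals available from \cite{R18} furnishes, along a further subsequence, $u_{\e_n}\to u$ in $L^1(D)$ with $u\in GSBV^2(D)\cap L^2(D)$.

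Finally, the $\Gamma$-liminf inequality gives, for every fixed $\alpha>0$,
\begin{equation*}
\int_D q(\w,x,\nabla u)\dx+\alpha\int_{S_u}s_1(\w,x,\nu_u)\dHd\leq\liminf_{n}G_{\kappa_{\e_n},\alpha}(\w)(u_{\e_n},D)\leq C.
\end{equation*}
The uniform lower bound $s_1(\w,x,\nu)\geq 1/C$ from Theorem \ref{t.weakmembrane} then forces $\Hd(S_u)\leq C'/\alpha$, and sending $\alpha\to+\infty$ yields $\Hd(S_u)=0$. Together with $\nabla u\in L^2(D)$ (from $q(\w,x,\xi)\geq|\xi|^2/C$) and $u\in L^2(D)$, a routine truncation then promotes $u$ to $W^{1,2}(D)$. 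I expect the main technical point to be the careful setup of the first step, namely the rescaled application of Proposition \ref{prop:ATandWeak} with the enlarged parameter $\beta_\e=\beta\kappa_\e/\e$; the remainder is a standard compactness plus lower-semicontinuity argument exploiting the divergence of the surface parameter.
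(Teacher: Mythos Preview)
Your proposal is correct and follows essentially the same approach as the paper: both drop the non-negative discrete gradient term in $v$, rewrite the potential coefficient as $\beta\kappa_\e^d/\e=(\beta\kappa_\e/\e)\kappa_\e^{d-1}$, compare via Proposition~\ref{prop:ATandWeak} with a weak-membrane energy $G_{\kappa_\e,\alpha}$ for arbitrary fixed $\alpha$ (the paper calls this parameter $\ell$), invoke compactness from \cite{R18}, and then use the $\Gamma$-liminf bound together with $s_\alpha=\alpha s_1\geq \alpha/C$ and $\alpha\to+\infty$ to force $\Hd(S_u)=0$. The only cosmetic difference is that you pass through the intermediate $G_{\kappa_\e,\beta_\e}$ and the explicit monotonicity of $\alpha\mapsto f_\alpha$, while the paper directly observes that the coefficient $\beta\kappa_\e/\e$ eventually dominates any fixed $\ell$.
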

\begin{remark}\label{r.gammalimit}
Under the assumptions of Corollary \ref{c.optimal}, (up to subsequences) the $\Gamma$-limit agrees with the one given by Theorem \ref{ACGmain}. Indeed, an upper bound is given by setting $v\equiv 1$, while the lower bound is obtained via comparison with weak membrane energies $G_{\e,\alpha}(\w)$ for any $\alpha>0$ in the case of a limit function $u\in W^{1,2}(D)$. We leave the details to the interested reader.
\end{remark}
\section{Separation of scales: proof of Theorem \ref{mainrep}, Theorem \ref{t.representationl} and Corollary \ref{c.optimal}}\label{s.proofs}
The main part of this section is devoted to the proof of the integral-representation result Theorem \ref{mainrep}. 
\subsection{Integral representation in $SBV^2$}
As a first step towards the proof of Theorem \ref{mainrep}, using the so-called localization method of $\Gamma$-convergence together with the general result \cite[Theorem 1]{BFLM} we prove the following preliminary result.
\begin{proposition}\label{limitsbvp}
Let $\Lw$ be an admissible stochastic lattice with admissible edges $\mathcal{E}(\w)$. Given any sequence $\e\to 0$ there exists a subsequence $\e_n$ (possibly depending on the realization) such that for all $A\in\Ard$ the functionals $F_{\e_n}(\w)(\cdot,\cdot,A)$ $\Gamma$-converge in the strong $L^1(D)\times L^1(D)$-topology to a functional $F(\w)(\cdot,\cdot,A):L^1(D)\times L^1(D)\to[0,+\infty]$. If $u\in SBV^2(A)$ then $F(\w)(u,1,A)$ can be written as
\begin{equation*}
F(\w)(u,1,A)=
\int_A h(\w,x,\nabla u)\dx+\int_{S_u\cap A}\varphi(\w,x,{u^+}-{u^-},\nu_u)\,\mathrm{d}\mathcal{H}^{d-1},
\end{equation*}
where, for $x_0\in D$, $\nu\in S^{d-1}$, $a\in\R$ and $\xi\in\R^{d}$, the integrands are given by
\begin{equation}\label{derivationformula}
\begin{split}
h(\w,x_0,\xi)&=\limsup_{\varrho\to 0}\varrho^{-d}m^\w(u_{x_0,\xi},Q_{\nu}(x_0,\varrho)),
\\
\varphi(\w,x_0,a,\nu)&=\limsup_{\varrho\to 0}\varrho^{1-d}m^\w(u_{x_0,\nu}^{a,0},Q_{\nu}(x_0,\varrho))
\end{split}
\end{equation} 
with the functions $u_{x_0,\nu}^{a,0}$ and $u_{x_0,\xi}$ defined in \eqref{eq:purejump} and \eqref{def:affine}, respectively, and the function $m^\w(\bar{u},A)$ defined for any $\bar{u}\in SBV^2(D)$ and $A\in\Ard$ by
\begin{equation*}
m^\w(\bar{u},A):=\inf\{F(\w)(u,1,A):\;u\in SBV^2(A),\,u=\bar{u}\text{ in a neighborhood of }\partial A\}.
\end{equation*}
\end{proposition}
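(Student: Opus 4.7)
The plan is to obtain Proposition \ref{limitsbvp} via the classical localization machinery of $\Gamma$-convergence combined with the abstract integral representation theorem \cite[Theorem 1]{BFLM}. First I fix a countable dense family $\mathcal{R}\subset\Ard$, \eg the finite unions of open polyrectangles with rational vertices whose closure lies in $D$. By the compactness of $\Gamma$-convergence with respect to the metrizable $L^1\times L^1$-topology (cf. \cite{DM}) and a standard diagonal extraction, I select a subsequence $\e_n\to 0$ along which the $\Gamma$-$\liminf$ and $\Gamma$-$\limsup$ of $F_{\e_n}(\w)(\cdot,\cdot,R)$ coincide for every $R\in\mathcal{R}$, defining a candidate limit $F(\w)(\cdot,\cdot,R)$. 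Inner regularity from $\mathcal{R}$, to be justified a posteriori once the measure property is in place, will then give the extension of $F(\w)(\cdot,\cdot,A)$ to every $A\in\Ard$.

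Next I identify the abstract domain and establish the correct growth. A uniform bound $F_{\e_n}(\w)(u_n,v_n,A)\leq C$ forces $\sum_{\e_n x\in\e_n\Lw\cap A}\e_n^{d-1}(v_n(\e_n x)-1)^2\leq C$; multiplying by $\e_n$ gives $\|v_n-1\|_{L^2(A)}^2\leq C\e_n$, so any finite-energy limit satisfies $v\equiv 1$ almost everywhere. Coercivity in $u$ is inherited from the weak membrane comparison: by Proposition \ref{prop:ATandWeak}, dropping the discrete Dirichlet term in $v$ yields $F_{\e_n}(\w)(u_n,v_n,A)\geq G_{\e_n,\beta}(\w)(u_n,A)$, and refining the subsequence so that Theorem \ref{t.weakmembrane} applies, the known compactness for weak-membrane energies (cf. \cite{R18}) forces $u\in GSBV^2(A)$ with the expected $L^2$-bound on $\nabla u$ and finiteness of $\Hd(S_u\cap A)$; it also supplies the lower growth bound $F(\w)(u,1,A)\geq c(\int_A|\nabla u|^2\dx+\Hd(S_u\cap A))$. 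A matching upper bound of the form $C(|A|+\int_A|\nabla u|^2\dx+\Hd(S_u\cap A))$ follows from an explicit recovery sequence: for smooth $u$ with smooth jump set one takes $v_n$ equal to $0$ on a tubular neighborhood of $S_u$ of width $\sim M\e_n$ and equal to $1$ elsewhere, sets $u_n(\e_n x)=u(\e_n x)$ on lattice points, and concludes by density in $SBV^2(A)$.

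At this point I apply \cite[Theorem 1]{BFLM} to $u\mapsto F(\w)(u,1,A)$ on $SBV^2(A)$. Lower semicontinuity and locality come for free from $\Gamma$-convergence and the discrete structure of $F_{\e_n}(\w)$, while invariance under addition of constants is automatic since $F_{\e_n}^b$ depends only on differences of $u$. The delicate remaining input, and the step I expect to be the main obstacle, is the \emph{fundamental estimate} needed to show that for each $u$ the set function $A\mapsto F(\w)(u,1,A)$ extends to a Radon measure on $D$. Concretely, given two $\Gamma$-realizing sequences on open sets $A'\wcont A\wcont A''$ one must glue them over a buffer of width controlled by the discrete interaction range $M\e_n$ without producing an energy defect larger than a vanishing fraction of the bulk and surface energies concentrated on a thin annulus. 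In the discrete random setting the classical De Giorgi slicing by parallel strips must be adapted to the lattice: one splits a macroscopic annular buffer into many thin substrips, chooses by averaging a substrip on which both the bulk and surface energies of the competing sequences are small, and patches $u$ and $v$ on lattice points inside it, using the short-range nature of $\mathcal{E}(\w)$ to bound the cross-interactions. Once the fundamental estimate is in place, \cite[Theorem 1]{BFLM} delivers the integral representation on $SBV^2(A)$ with the integrands characterized precisely by the asymptotic minimum problems \eqref{derivationformula} in terms of $m^\w$, completing the proof.
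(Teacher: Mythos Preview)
Your proposal is correct and follows essentially the same localization-plus-\cite[Theorem 1]{BFLM} route as the paper, including the weak-membrane lower bound via Proposition \ref{prop:ATandWeak} and the discrete De Giorgi averaging for the fundamental estimate (the paper's Proposition \ref{subadd}, which glues both $u$ and $v$ across thin layers exactly as you outline). The only tactical difference is in the upper bound: you sketch an explicit recovery sequence for functions with smooth jump set plus density in $SBV^2$, whereas the paper's Lemma \ref{bounds} instead observes that $|v(\e x)-v(\e y)|^2\leq(1-\min\{v(\e x),v(\e y)\})^2$ to bound $F_{\e}(\w)$ from above by the weak-membrane energy $G_{\e,\beta(1+M)}(\w)$ and then inherits the upper bound directly from \cite{R18}, which is shorter and sidesteps the density argument.
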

\noindent In order to prove this result we will analyze the localized $\Gamma\hbox{-}\liminf$ and $\Gamma\hbox{-}\limsup$ $F^{\prime}(\w),F^{\prime\prime}(\w):L^{1}(D)\times L^1(D)\times\mathcal{A}(D)\to [0,+\infty]$ of the functionals $F_{\e}(\w)$, which are defined as
\begin{equation*}
\begin{split}
F^{\prime}(\w)(u,v,A)&:=\inf\{\liminf_{\e\to 0}F_{\e}(\w)(u_{\e},v_{\e},A):\;u_{\e}\to u\, \text{ and }\,v_{\e}\to v \text{ in }L^{1}(D)\},\\
F^{\prime\prime}(\w)(u,v,A)&:=\inf\{\limsup_{\e\to 0}F_{\e}(\w)(u_{\e},v_{\e},A):\;u_{\e}\to u\, \text{ and }\,v_{\e}\to v\text{ in }L^{1}(D)\}.
\end{split}
\end{equation*}

\begin{remark}\label{proxi}
Both functionals are $L^1(D)\times L^1(D)$-lower semicontinuous. Moreover, for any $w\in L^{1}(D)$ there exists indeed a sequence $w_{\e}\in\mathcal{PC}_{\e}^{\w}$ such that $w_{\e}\to w$ in $L^{1}(D)$.
\end{remark} 
Our aim is to apply the integral representation of \cite[Theorem 1]{BFLM}. To this end, below we establish several properties of $F^{\prime}(\w)$ and $F^{\prime\prime}(\w)$. The next remark about truncations allows to reduce some of the arguments used in the forthcoming proofs to the case of bounded functions.
\begin{remark}\label{trunc}
Let $u_{\e},v_{\e}\in\mathcal{PC}_{\e}^{\w}$. For any $k>0$ let $T_ku_{\e}$ denote the truncation of $u_\e$ at level $k$. Then it is immediate to see that $F_{\e}(\w)(T_ku_{\e},v_{\e},A)\leq F_{\e}(\w)(u_{\e},v_{\e},A)$ for any $A\in\mathcal{A}(D)$. In particular, whenever $u\in L^{\infty}(D)$ we can compute $F^{\prime}(\w)(u,1,A)$ and $F^{\prime\prime}(\w)(u,1,A)$ considering sequences $u_{\e}\in\mathcal{PC}_{\e}^{\w}$ such that $|u_{\e}(\e x)|\leq \|u\|_{\infty}$ for all $x\in\Lw$. Moreover, also $F^{\prime}$ and $F^{\prime\prime}$ decrease by truncation in $u$. Thus, since in addition both functionals are $L^1(D)$-lower semicontinuous, for all $u\in L^1(D)$ we have
\begin{equation*}
\begin{split}
\lim_{k\to +\infty}F^{\prime}(\w)(T_ku,1,A)&=F^{\prime}(\w)(u,1,A),\\
\lim_{k\to +\infty}F^{\prime\prime}(\w)(T_ku,1,A)&=F^{\prime\prime}(\w)(u,1,A).
\end{split}
\end{equation*}
Moreover, since $F_\e(\w)$ is invariant under translation in $u$, we deduce that also both $F'(\w)(\cdot,1,A)$ and $F''(\w)(\cdot,1,A)$ are invariant under translation in $u$.
\end{remark}
\noindent We next show that $F^{\prime\prime}(\w)$ is local.
\begin{lemma}[Locality]\label{local}
Let $A\in\Ard$. If $u,\tilde{u}\in L^1(D)$ and $u=\tilde{u}$ a.e. on $A$, then $F^{\prime\prime}(\w)(u,1,A)=F^{\prime\prime}(\w)(\tilde{u},1,A)$.
\end{lemma}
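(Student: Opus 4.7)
By symmetry it suffices to show $F''(\w)(\tilde u,1,A)\leq F''(\w)(u,1,A)$, and we may assume the right-hand side is finite. The key structural observation is that the localized energy $F_\e(\w)(\cdot,\cdot,A)$ only involves values $u_\e(\e x)$ at lattice points $\e x\in A$: indeed, the sum in \eqref{eq:defbulk} runs over pairs $(x,y)\in\mathcal{E}(\w)$ with \emph{both} $\e x$ and $\e y$ in $A$, while $F_\e^s(\w)$ depends only on $v$. My strategy is therefore a cut-and-paste argument: given a recovery pair $(u_\e,v_\e)$ for $F''(\w)(u,1,A)$, keep the values of $u_\e$ at lattice points inside $A$ and replace them at the remaining points by an arbitrary approximation of $\tilde u$.

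\emph{Step 1 (reduction to $L^\infty$).} Since $F''(\w)(\cdot,1,A)$ decreases under truncation and is $L^1$-lower semicontinuous (Remark \ref{trunc}), one has $F''(\w)(u,1,A)=\lim_{k\to\infty}F''(\w)(T_ku,1,A)$ and likewise for $\tilde u$. Because $T_ku=T_k\tilde u$ a.e.\ on $A$, it suffices to prove the lemma under the additional assumption $u,\tilde u\in L^\infty(D)$; by the same remark, the recovery sequences may be taken with $\|u_\e\|_\infty\leq\|u\|_\infty$.

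\emph{Step 2 (splicing and energy equality).} Fix a recovery pair $(u_\e,v_\e)\to(u,1)$ with $\limsup_\e F_\e(\w)(u_\e,v_\e,A)=F''(\w)(u,1,A)$, and, using Remark \ref{proxi}, pick any bounded sequence $\tilde u_\e'\in\mathcal{PC}_\e^\w$ converging to $\tilde u$ in $L^1(D)$. Define $\tilde u_\e\in\mathcal{PC}_\e^\w$ by $\tilde u_\e(\e x)=u_\e(\e x)$ if $\e x\in A$ and $\tilde u_\e(\e x)=\tilde u_\e'(\e x)$ otherwise. The structural observation above immediately yields the identity $F_\e(\w)(\tilde u_\e,v_\e,A)=F_\e(\w)(u_\e,v_\e,A)$ for every $\e>0$.

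\emph{Step 3 ($L^1$-convergence and conclusion).} Set $A_\e:=\bigcup\{\e\mathcal{C}(x):x\in\Lw,\ \e x\in A\}$. The inclusion $\mathcal{C}(x)\subset B_R(x)$ from Remark \ref{voronoi} shows that $A_\e\triangle A$ is contained in an $R\e$-tubular neighborhood of $\partial A$, whose Lebesgue measure vanishes as $\e\to 0$ because $A$ has Lipschitz boundary. Splitting $\|\tilde u_\e-\tilde u\|_{L^1(D)}$ over $A_\e$ and $D\setminus A_\e$, exploiting $u=\tilde u$ on $A$ and the uniform $L^\infty$-bound from Step 1, the contribution on the thin layer is $O(\|u\|_\infty\,|A_\e\triangle A|)\to 0$, whence $\tilde u_\e\to\tilde u$ in $L^1(D)$. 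Therefore $(\tilde u_\e,v_\e)$ is admissible for $F''(\w)(\tilde u,1,A)$, giving the required inequality. The only delicate point, and the sole reason the preliminary truncation is needed, is precisely the control of this boundary-layer error: without an $L^\infty$-bound on $u_\e$ the splicing could in principle introduce an $L^1$-error that does not vanish as $\e\to 0$.
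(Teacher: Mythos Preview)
Your proof is correct and follows the same cut-and-paste strategy as the paper's. The only technical difference is that you first truncate to $L^\infty$ to control the boundary-layer error, whereas the paper skips this reduction and appeals directly to the equiintegrability of the $L^1$-convergent sequences $u_\e,\tilde u_\e$ (and also splices $v$, which is harmless but unnecessary since the target is $1$).
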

\begin{proof}
Due to Remark \ref{proxi} there exist sequences $(u_{\e},v_{\e}),(\tilde{u}_{\e},\tilde{v}_{\e})\in\mathcal{PC}_{\e}^{\w}\times\mathcal{PC}_\e^\w$ converging to $(u,1)$ and $(\tilde{u},1)$ in $L^1(D)\times L^1(D)$, respectively, and such that
\begin{equation*}
\limsup_{\e\to 0}F_{\e}(\w)(u_{\e},v_{\e},A)=F^{\prime\prime}(\w)(u,1,A),\quad\quad\limsup_{\e\to 0}F_{\e}(\w)(\tilde{u}_{\e},\tilde{v}_{\e},A)=F^{\prime\prime}(\w)(\tilde{u},1,A).
\end{equation*}
Define $u^0_{\e},v^0_{\e}\in\mathcal{PC}_{\e}^{\w}$ by their values on $\e\Lw$ as
\begin{equation*}
\begin{split}
u^0_{\e}(\e x)&=\mathds{1}_{A}(\e x)u_{\e}(\e x)+(1-\mathds{1}_A(\e x))\tilde{u}_{\e}(\e x),
\\
v^0_{\e}(\e x)&=\mathds{1}_{A}(\e x)v_{\e}(\e x)+(1-\mathds{1}_A(\e x))\tilde{v}_{\e}(\e x).
\end{split}
\end{equation*}
Using that $|\partial A|=0$ and the equiintegrability of $u_{\e},\tilde{u}_{\e},v_{\e}$, and $\tilde{v}_{\e}$, one can show that $u^0_{\e}\to \tilde{u}$ and $v^0_{\e}\to 1$ in $L^1(D)$. Then by definition
\begin{equation*}
F^{\prime\prime}(\w)(\tilde{u},1,A)\leq\limsup_{\e\to 0}F_{\e}(\w)(u^0_{\e},v^0_{\e},A)=\limsup_{\e\to 0}F_{\e}(\w)(u_{\e},v_{\e},A)=F^{\prime\prime}(\w)(u,1,A).
\end{equation*}
Exchanging the roles of $u$ and $\tilde{u}$ we conclude.
\end{proof}
The next lemma provides a lower bound for $F^{\prime}$. We also obtain equicoercivity under an additional equiintegrability assumption.
\begin{lemma}[Compactness and lower bound]\label{compact}
Assume that $A\in\mathcal{A}^R(D)$ and $u_{\e},v_{\e}\in\mathcal{PC}_{\e}^{\w}$ are such that 
\begin{equation*}
\sup_{\e}F_{\e}(\w)(u_{\e},v_{\e},A)<+\infty.
\end{equation*}
Then $v_{\e}\to 1$ in $L^1(A)$. If $u_{\e}$ is equiintegrable on $A$, then there exists a subsequence (not relabeled) such that $u_{\e}\to u$ in $L^1(A)$ for some $u\in GSBV^2(A)$. Moreover we have the estimate
\begin{equation*}
\frac{1}{c}\left(\int_A|\nabla u|^2\dx+\mathcal{H}^{d-1}(S_u\cap A)\right)\leq F^{\prime}(\w)(u,1,A)
\end{equation*}
for some constant $c>0$ independent of $\w,A$ and $u$.
\end{lemma}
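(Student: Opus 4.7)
My plan is to reduce everything to the analysis of the weak-membrane functionals $G_{\e,\beta}(\w)$ via the pointwise comparison of Proposition \ref{prop:ATandWeak}. That proposition, together with the non-negativity of the gradient-of-$v$ term discarded in the definition of $G_{\e,\beta}(\w)$, yields the chain
\begin{equation*}
G_{\e,\beta}(\w)(u_\e,A) \leq F_\e^b(\w)(u_\e,v_\e,A) + \frac{\beta}{2}\sum_{\e x\in\e\Lw\cap A}\e^{d-1}(v_\e(\e x)-1)^2 \leq F_\e(\w)(u_\e,v_\e,A)
\end{equation*}
for every admissible pair $(u_\e,v_\e)$, and thus transfers the uniform $F_\e(\w)$-bound to $G_{\e,\beta}(\w)(u_\e,A)$. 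I would then invoke the compactness and the linear lower bound for $G_{\e,\beta}(\w)$, which are developed in \cite{R18} as the core ingredients of Theorem \ref{t.weakmembrane}: up to a subsequence, provided $u_\e$ is equiintegrable, there exists $u \in GSBV^2(A)$ with $u_\e \to u$ in $L^1(A)$ and $\frac{1}{c}(\int_A |\nabla u|^2\dx + \mathcal{H}^{d-1}(S_u\cap A)) \leq \liminf_\e F_\e(\w)(u_\e,v_\e,A)$.

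For the convergence $v_\e \to 1$ in $L^1(A)$ I would exploit only the first term of $F_\e^s(\w)$: since $v_\e \in [0,1]$ by \eqref{deffunctional},
\begin{equation*}
\sum_{\e x\in\e\Lw\cap A}\e^{d-1}(v_\e(\e x)-1)^2 \leq \frac{2}{\beta} F_\e(\w)(u_\e,v_\e,A) \leq C.
\end{equation*}
Since $v_\e$ is constant on each scaled Voronoi cell $\e\mathcal{C}(x)$ with $|\e\mathcal{C}(x)| \leq C\e^d$ by Remark \ref{voronoi}, multiplying by $\e$ and summing over nuclei with $\e x \in A$ gives $\|v_\e - 1\|_{L^2(A)}^2 \leq C\e$; the Lipschitz regularity of $\partial A$ ensures that the cells near $\partial A$ whose nuclei lie just outside $A$ contribute only an additional $O(\e)$, using $|v_\e - 1| \leq 1$. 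An application of Cauchy--Schwarz then yields $\|v_\e - 1\|_{L^1(A)} \to 0$.

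To upgrade the sequence-wise bound in the first paragraph to an estimate involving $F'(\w)(u,1,A)$, I would use the truncation principle of Remark \ref{trunc} twice. First, I would treat the case $u \in L^\infty(A)$: for any $(\tilde u_\e, \tilde v_\e) \to (u,1)$ with finite $\liminf F_\e(\w)(\tilde u_\e, \tilde v_\e, A)$, truncating $\tilde u_\e$ at $\|u\|_\infty$ produces an equiintegrable sequence still converging to $u$ with no larger energy; the first paragraph then applies, and taking the infimum over recovery sequences completes this case. For general $u \in GSBV^2(A)$, I would pass to the limit $k\to\infty$ using the monotone convergence $\int_A|\nabla T_k u|^2\dx \nearrow \int_A|\nabla u|^2\dx$ and $\mathcal{H}^{d-1}(S_{T_k u} \cap A) \nearrow \mathcal{H}^{d-1}(S_u \cap A)$ together with $F'(\w)(T_k u,1,A) \to F'(\w)(u,1,A)$ from Remark \ref{trunc}.

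The main obstacle is the extraction from \cite{R18} of a precise $GSBV^2$-compactness statement and of a separated, linear lower bound for the gradient and jump parts of $G_{\e,\beta}(\w)$ under the sole assumption of equiintegrability. These are not explicitly isolated in the statement of Theorem \ref{t.weakmembrane} quoted above but enter its proof as intermediate tools, which is precisely why the introduction lists Lemma \ref{compact} among the results relying on the estimates of \cite{R18}.
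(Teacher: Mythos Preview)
Your proposal is correct and follows essentially the same route as the paper: reduce to the weak-membrane functional via Proposition~\ref{prop:ATandWeak} and then invoke the compactness and lower bound from \cite{R18}. The paper is simply more terse—it cites \cite[Lemma~5.6]{R18} directly for both compactness under equiintegrability and the $\Gamma\text{-}\liminf$ bound, so your truncation safety net in the last paragraph, while correct, is not needed once that lemma is available; likewise your explicit $\|v_\e-1\|_{L^2(A)}^2\leq C\e$ computation is exactly what the paper compresses into ``boundedness of the energy and Remark~\ref{voronoi}''.
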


\begin{proof}
Since $0\leq v_{\e}\leq 1$, boundedness of the energy and Remark \ref{voronoi} imply that $v_{\e}\to 1$ in $L^1(A)$. Moreover, due to Proposition \ref{prop:ATandWeak} we have 
\begin{equation*}
F_{\e}(\w)(u_{\e},v_{\e},A)\geq \min_{v:\e\Lw\to [0,1]}F_{\e}(\w)(u_{\e},v,A)\geq G_{\e,\beta}(\w)(u_{\e},A).
\end{equation*}
Hence the compactness statement and the lower bound on the $\Gamma\hbox{-}\liminf$ are a direct consequence of the corresponding result for weak-membrane energies (cf. Theorem \ref{t.weakmembrane} or \cite[Lemma 5.6]{R18}).
\end{proof}

As a next step we prove the corresponding upper bound for $F^{\prime\prime}(\w)$.
\begin{lemma}[Upper bound]\label{bounds}
Let $u\in L^1(D)$. There exists a constant $c>0$ independent of $\w$ and $u$ such that for all $A\in\mathcal{A}^R(D)$ with $u\in GSBV^2(A)$ it holds that
\begin{equation*}
F^{\prime\prime}(\w)(u,1,A)\leq c\left(\int_A|\nabla u|^2\dx+\mathcal{H}^{d-1}(S_u\cap A)\right).
\end{equation*}
\end{lemma}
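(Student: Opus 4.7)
I will construct, for every $u \in GSBV^2(A) \cap L^1(D)$, a recovery sequence $(u_\e, v_\e) \in \mathcal{PC}_\e^\w \times \mathcal{PC}_\e^\w$ converging to $(u, 1)$ in $L^1(D) \times L^1(D)$ with
\begin{equation*}
\limsup_{\e \to 0} F_\e(\w)(u_\e, v_\e, A) \leq c \Big( \int_A |\nabla u|^2 \dx + \Hd(S_u \cap A) \Big),
\end{equation*}
for a constant $c$ depending only on $r$, $R$, $M$ and $\beta$.

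\emph{Reductions.} By Remark \ref{trunc} together with dominated convergence ($|\nabla T_k u| \leq |\nabla u|$ a.e., $S_{T_k u} \subset S_u$, $\Hd(S_{T_k u}) \to \Hd(S_u)$), I first reduce to $u \in SBV^2(A) \cap L^\infty(D)$. A Cortesani--Toader type density theorem then yields approximants $u_n \in SBV^2(A) \cap L^\infty$ with $u_n \in W^{1,\infty}(A \setminus K_n)$, $K_n$ a finite union of closed $(d-1)$-simplices contained in $\overline{A}$, such that $u_n \to u$ in $L^1$, $\nabla u_n \to \nabla u$ in $L^2(A;\R^d)$ and $\Hd(S_{u_n}) \to \Hd(S_u)$. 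Using the $L^1 \times L^1$-lower semicontinuity of $F''(\w)(\cdot, 1, A)$ (Remark \ref{proxi}), it suffices to prove the bound for each such piecewise regular $u$ with jump set contained in a polyhedral set $K \subset \overline{A}$.

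\emph{Construction and estimates.} Fix a Lipschitz profile $\psi : [0, +\infty) \to [0,1]$ with $\psi \equiv 0$ on $[0, 2M]$, $\psi \to 1$ at $+\infty$ and $\int_0^\infty (|\psi - 1|^2 + |\psi'|^2)\,dt < +\infty$ (for instance $\psi(t) = (1 - e^{-(t - 2M)})_+$), and set
\begin{equation*}
v_\e(\e x) := \psi\big( \dist(\e x, K)/\e \big), \qquad u_\e(\e x) := u(\e x),
\end{equation*}
assigning $u$ any value on the negligible set of lattice points lying on $K$. Admissibility of $\Lw$ (Definition \ref{defadmissible}) gives the layer-cake estimate
\begin{equation*}
\#\{\e x \in \e\Lw : \dist(\e x, K) \in [\e t, \e(t + 1)]\} \leq c \Hd(K) \e^{-(d-1)} \quad \text{for all } t \geq 0,
\end{equation*}
which together with the $L^2$-integrability of $\psi - 1$ and $\psi'$, the Lipschitz bound $|v_\e(\e x) - v_\e(\e y)| \leq M\|\psi'\|_\infty$ on edges, and the uniform neighbour bound \eqref{neighbours} yields $F_\e^s(\w)(v_\e, A) \leq c\Hd(K)$. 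For the bulk term, the contribution from lattice points with $\dist(\e x, K) \leq M\e$ vanishes since $v_\e(\e x) = 0$ there; for $\dist(\e x, K) > M\e$ and any $y \in \mathcal{E}(\w)(x)$, the segment $[\e x, \e y]$ has length $\leq M\e$ and lies entirely in a single connected component of $A \setminus K$ on which $u$ is Lipschitz. Writing $u(\e y) - u(\e x) = \int_0^1 \nabla u(\e x + s(\e y - \e x)) \cdot (\e y - \e x)\,ds$, applying Jensen's inequality and a Fubini argument on the random lattice (exploiting again \eqref{neighbours} and Definition \ref{defadmissible}) gives $F_\e^b(\w)(u_\e, v_\e, A) \leq c M^2 \int_A |\nabla u|^2 \dx + o(1)$. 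Summing the two estimates and passing to the limit along the approximation yields the claim.

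\emph{Main obstacle.} The delicate point is the bulk estimate, which crucially uses that $v_\e$ has been forced to vanish in an $M\e$-tube around $K$: this prevents any surviving interaction from crossing the jump set and allows every discrete gradient of $u_\e$ to be controlled pointwise by $\nabla u$ on a smooth component. A secondary technical point is the Cortesani--Toader density result, which must provide simultaneous convergence of the $L^2$-norm of the gradient and of the $\Hd$-measure of the jump set.
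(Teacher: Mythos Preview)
Your approach is correct and can be completed, but it is quite different from the paper's proof. The paper avoids any explicit recovery sequence: it observes the elementary inequality $|v(\e x)-v(\e y)|^2\le (v(\e x)-1)^2$ for $v\in[0,1]$ (assuming $v(\e x)\le v(\e y)$), so the discrete-gradient part of $F_\e^s$ is controlled by $M$ times the single-well part; combining this with Proposition~\ref{prop:ATandWeak} gives $\min_v F_\e(\w)(u,v,A)\le G_{\e,\beta(1+M)}(\w)(u,A)$, and the upper bound is then inherited from the weak-membrane result in \cite{R18}. This is essentially a three-line reduction that leverages existing machinery. Your route---density via Cortesani--Toader and an explicit optimal-profile construction $v_\e(\e x)=\psi(\dist(\e x,K)/\e)$---is the classical Ambrosio--Tortorelli strategy and has the virtue of being self-contained, at the price of redoing estimates that \cite{R18} already provides. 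Two technical points in your plan need care: the ``Fubini argument on the random lattice'' for the bulk term does not directly yield $\int_A|\nabla u|^2$ from pointwise evaluation of $|\nabla u|^2$ unless $\nabla u$ is continuous, so you should take the Cortesani--Toader approximants in $C^1(A\setminus K_n)$ (the theorem allows this) or use cell averages with a Poincar\'e inequality; and your layer-cake count is not uniform in $t$ (the tube around a polyhedral $K$ has lower-order corrections growing with $t$), though this is harmless since the exponential decay of $(1-\psi(t))^2$ absorbs the error.
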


\begin{proof}
We compare the functionals with weak-membrane energies in the sense of an appropriate upper bound. To this end, let $v:\e\Lw\to [0,1]$ and fix an edge $(x,y)\in\mathcal{E}(\w)$. We assume without loss of generality that $v(\e x)\leq v(\e y)$. Then
\begin{equation*}
|v(\e x)-v(\e y)|=v(\e y)-v(\e x)\leq 1-v(\e x),
\end{equation*}
so that $|v(\e x)-v(\e y)|^2\leq (v(\e x)-1)^2$. From \eqref{neighbours} we then conclude that 
\begin{equation*}
\sum_{\substack{(x,y)\in\mathcal{E}(\w)\\ \e x,\e y\in A}}\e^{d-1}|v(\e x)-v(\e y)|^2\leq 2M\sum_{\e x\in\e\Lw\cap A}\e^{d-1}(v(\e x)-1)^2.
\end{equation*}
In particular, applying Proposition \ref{prop:ATandWeak}, for every $u:\e\Lw\to\R$ we deduce the upper bound 
\begin{align*}
\min_{v:\e\Lw\to [0,1]}F_{\e}(\w)(u,v,A) &\leq\min_{v:\e\Lw\to [0,1]}\Big(F_{\e}^{b}(\w)(u,v,A)+\frac{\beta(1+M)}{2}\sum_{\e x\in\e\Lw\cap A}\e^{d-1}(v(\e x)-1)^2\Big)\\
&= G_{\e,\beta(1+M)}(\w)(u,A).
\end{align*}
Hence the statement follows by comparison with the upper bound for weak-membrane energies (cf. Theorem \ref{t.weakmembrane} or \cite[Lemma 5.7]{R18}). Note that any sequence of optimal $v_{\e}$'s will convergence to $1$ since the energy remains bounded for any target function $u\in GSBV^2(A)$.
\end{proof}

The following technical lemma establishes an almost subadditivity of the set function $A\mapsto F^{\prime\prime}(\w)(u,A)$.
\begin{proposition}[Almost subadditivity]\label{subadd}
Let $A,B\in\Ard$. Moreover let $A^{\prime}\in\Ard$ be such that $A^{\prime}\subset\subset A$. Then, for all $u\in L^1(D)$,
\begin{equation*}
F^{\prime\prime}(\w)(u,1,A^{\prime}\cup B)\leq F^{\prime\prime}(\w)(u,1,A)+F^{\prime\prime}(\w)(u,1,B).
\end{equation*}
\end{proposition}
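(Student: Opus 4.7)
The inequality is a fundamental estimate, proven by the classical De Giorgi cut-off/averaging scheme. The key technical issue compared with the continuous Ambrosio--Tortorelli case is that the bulk energy carries a $v^2$-weight on the discrete gradient of $u$: a naive convex-combination gluing of $v$ would produce untamed cross terms like $(v_\e^A)^2|\nabla u_\e^B|^2$. We circumvent this by gluing $u$ via a smooth discrete cut-off while taking the pointwise minimum of $v_\e^A,v_\e^B$ in the transition layer, where the minimum pointwise dominates each of $v_\e^A,v_\e^B$ and thus attaches the right $v$-weight to each of $|\nabla u_\e^A|^2$ and $|\nabla u_\e^B|^2$.

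\textbf{Construction.} By Remark \ref{trunc} we may reduce to $u \in L^\infty(D)$. Assuming the right-hand side finite, pick recovery sequences $(u_\e^\bullet, v_\e^\bullet) \in \mathcal{PC}_\e^\omega \times \mathcal{PC}_\e^\omega$ ($\bullet \in \{A, B\}$) attaining the limsups defining $F''(\w)(u, 1, \bullet)$; after truncation at $\|u\|_\infty$ (which does not increase the energy by Remark \ref{trunc}), $u_\e^A - u_\e^B \to 0$ in $L^p(D)$ for every $p < \infty$, and $v_\e^A, v_\e^B \to 1$ in $L^1(D)$ by Lemma \ref{compact}. Fix $N \in \N$, set $d := \dist(A', \partial A) > 0$, and choose nested Lipschitz sets $A' = A_0 \wcont A_1 \wcont \cdots \wcont A_N \wcont A$ with $\dist(\overline{A_{i-1}}, \partial A_i) \geq d/(2N)$ and disjoint layers $L_i := A_i \setminus \overline{A_{i-1}}$. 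For $\e$ so small that $M\e < d/(4N)$, fix discrete cut-offs $\phi_i^\e \in \mathcal{PC}_\e^\omega$ valued in $[0, 1]$, equal to $1$ on lattice points of $A_{i-1}$, to $0$ on lattice points outside $A_i$, with $|\phi_i^\e(\e x) - \phi_i^\e(\e y)| \leq C\e N/d$ on every edge of $\mathcal{E}(\w)$ (obtained by sampling a smooth cut-off). Set
$$
u_\e^i := \phi_i^\e u_\e^A + (1 - \phi_i^\e) u_\e^B, \qquad v_\e^i(\e x) := \begin{cases} v_\e^A(\e x), & \e x \in A_{i-1}, \\ \min(v_\e^A, v_\e^B)(\e x), & \e x \in L_i, \\ v_\e^B(\e x), & \e x \notin A_i, \end{cases}
$$
so that $(u_\e^i, v_\e^i) \in \mathcal{PC}_\e^\omega \times \mathcal{PC}_\e^\omega$ converges to $(u, 1)$ in $L^1(D) \times L^1(D)$.

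\textbf{Decomposition and averaging.} Split $F_\e(\w)(u_\e^i, v_\e^i, A' \cup B)$ into three parts, according to whether an edge or lattice point lies in $A_{i-1}$ (the contribution coincides with $(u_\e^A, v_\e^A)$ and is $\leq F_\e(\w)(u_\e^A, v_\e^A, A)$, since $A_{i-1} \subset A$), in $(D \setminus A_i) \cap (A' \cup B) = B \setminus A_i \subset B$ (contribution coincides with $(u_\e^B, v_\e^B)$ and is $\leq F_\e(\w)(u_\e^B, v_\e^B, B)$, using $A' \subset A_{i-1}$), or in the $M\e$-thickening $L_i^+ \subset A$ of $L_i$---call this the transition error $R^i_\e$ (the $(A' \cup B)$-restriction puts these points into $B$ since $L_i^+ \cap A' = \emptyset$). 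Using the discrete product rule for $u_\e^i$, the bound $(v_\e^i)^2 \leq \min((v_\e^A)^2, (v_\e^B)^2)$ on $L_i$ (which pairs $v^2$ with the correct gradient), the elementary inequalities $(v_\e^i - 1)^2 \leq 2(v_\e^A - 1)^2 + 2(v_\e^B - 1)^2$, $|\min(a_1, b_1) - \min(a_2, b_2)|^2 \leq 2|a_1 - a_2|^2 + 2|b_1 - b_2|^2$ and $|v_\e^A - v_\e^B|^2 \leq 2(v_\e^A - 1)^2 + 2(v_\e^B - 1)^2$ (so that the surface-gradient cross term is absorbed into the surface first term of the recovery sequences), together with the fact that on edges straddling $\partial A_{i-1}$ or $\partial A_i$ the factor $|\phi_i^\e(\e x) - \phi_i^\e(\e y)| \leq C\e N/d$ absorbs one power of $\e$, one obtains after case-splitting
$$
R^i_\e \leq C\bigl[F_\e(\w)(u_\e^A, v_\e^A, L_i^+) + F_\e(\w)(u_\e^B, v_\e^B, L_i^+ \cap B)\bigr] + \frac{C N^2}{d^2}\|u_\e^A - u_\e^B\|_{L^2(L_i^+)}^2.
$$
Since the $L_i^+$ are essentially disjoint for $M\e < d/(4N)$, summation yields $\sum_i R^i_\e \leq C\bigl[F_\e(\w)(u_\e^A, v_\e^A, A) + F_\e(\w)(u_\e^B, v_\e^B, B)\bigr] + (CN^2/d^2)\|u_\e^A - u_\e^B\|_{L^2(D)}^2$. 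Pigeonholing (and extracting a subsequence so that the optimal $i^* \in \{1, \dots, N\}$ is constant), $R^{i^*}_\e \leq (C/N)\bigl[F_\e(\w)(u_\e^A, v_\e^A, A) + F_\e(\w)(u_\e^B, v_\e^B, B)\bigr] + (CN/d^2)\|u_\e^A - u_\e^B\|_{L^2}^2$. Taking $\limsup_{\e \to 0}$ makes the last term vanish by strong $L^2$-convergence after truncation, yielding $F''(\w)(u,1,A'\cup B) \leq (1+C/N)\bigl[F''(\w)(u,1,A) + F''(\w)(u,1,B)\bigr]$, and sending $N \to \infty$ gives the claim. The main obstacle was to simultaneously handle the two potentially divergent errors (the $v^2$-misweighted gradient in the layer, resolved by the three-tier $\min$-based definition of $v_\e^i$; and the cut-off error $(N/d)^2|u_\e^A - u_\e^B|^2$ that blows up with $N$, neutralised by sending $\e \to 0$ before $N \to \infty$).
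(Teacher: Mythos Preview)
Your proof is correct and follows the same De Giorgi averaging scheme as the paper; both glue $u$ via a discrete cut-off on a layer and use the pointwise minimum $w_\e=\min(v_\e^A,v_\e^B)$ as the transition value for the phase-field. The genuine difference lies in how the phase-field is glued across the layer. The paper interpolates \emph{smoothly} between $v_\e^A$, $w_\e$, and $v_\e^B$ using two additional cut-offs $\Theta_{i-2},\Theta_{i+2}$ spread over a five-layer buffer $S_\e^i$; the resulting cross term in the $v$-gradient picks up a factor $\e^2(N/h)^2$ from the cut-off, which after the $\e^{d-1}$-weighting and averaging yields the innocuous error $CN\e\|v_\e^A-v_\e^B\|_{L^2}^2\to 0$. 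You instead use a \emph{hard} three-piece definition on a single layer and absorb the boundary jump $|v_\e^A-v_\e^B|^2$ directly into the single-well part of the surface energy via $|v_\e^A-v_\e^B|^2\le 2(v_\e^A-1)^2+2(v_\e^B-1)^2$, which lands inside $F_\e^s(v_\e^A,L_i^+)+F_\e^s(v_\e^B,L_i^+\cap B)$ and hence is handled by the same averaging. Your route is slightly leaner (no second pair of cut-offs, no wide buffer), at the price of a marginally more careful case analysis on edges straddling $\partial A_{i-1}$ and $\partial A_i$. Two cosmetic remarks: the subsequence extraction to fix $i^*$ is unnecessary, since $\|u_\e^i-u\|_{L^1}+\|v_\e^i-1\|_{L^1}\le \|u_\e^A-u\|_{L^1}+\|u_\e^B-u\|_{L^1}+\|v_\e^A-1\|_{L^1}+\|v_\e^B-1\|_{L^1}$ uniformly in $i$; and your claim $L_i^+\cap A'=\emptyset$ fails for $i=1$, but the argument survives because any transition point in $A'\cup B$ not lying in $A'$ must lie in $B$, which is all you actually use.
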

\begin{proof}
Let $A,A',B$ and $u$ be as in the statement. It suffices to consider the case where both $F''(\w)(u,1,A)$ and $F''(\w)(u,1,B)$ are finite. Moreover, Remark \ref{trunc} allows us to restrict to the case $u\in L^\infty(D)$. We choose sequences $(u_\e,v_\e),(\tilde{u}_\e,\tilde{v}_\e)\in\mathcal{PC}_{\e}^{\w}\times\mathcal{PC}_{\e}^{\w}$ both converging to $(u,1)$ in $L^1(D)\times L^1(D)$ and satisfying
\begin{equation}\label{recovery}
\limsup_{\e\to 0}F_{\e}(\w)(u_{\e},v_{\e},A)=F^{\prime\prime}(\w)(u,1,A),\qquad\limsup_{\e\to 0}F_{\e}(\w)(\tilde{u}_\e,\tilde{v}_{\e},B)=F^{\prime\prime}(\w)(u,1,B).
\end{equation}
In view of Remark \ref{trunc} we may further assume that $\|u_\e\|_{\infty},\|\tilde{u}_\e\|_\infty\leq\|u\|_\infty$. Hence, since also $0\leq v_\e,\tilde{v}_\e\leq 1$ we actually have $(u_\e,v_\e),(\tilde{u}_\e,\tilde{v}_\e)\to (u,1)$ in $L^2(D)\times L^2(D)$. 

For fixed $N\in\N$ we now construct a sequence $(\hat{u}_\e,\hat{v}_\e)\in\mathcal{PC}_\e^\w\times\mathcal{PC}_\e^\w$ converging to $(u,1)$ in $L^2(D)\times L^2(D)$ such that
\begin{equation}\label{fundest}
\limsup_{\e\to 0}F_{\e}(\w)(\hat{u}_\e,\hat{v}_\e,A'\cup B)\leq\left(1+\frac{C}{N}\right)\left(F''(\w)(u,1,A)+F''(\w)(u,1,B)\right),
\end{equation}
for some constant $C>0$ depending only on $A,A',B$ and $u$. Then the result follows by the arbitrariness of $N\in\N$. We will obtain the required sequence $(\hat{u}_\e,\hat{v}_\e)$ by a classical averaging procedure, adapting the construction in \cite[Proposition 5.2]{BBZ} to a stochastic lattice. To this end we first need to introduce some notation. We consider an auxiliary function $w_\e\in\mathcal{PC}_\e^\w$ defined as
\begin{align*}
w_\e(\e x):=\min\{v_\e(\e x),\tilde{v}_\e(\e x)\}\quad\mbox{for every $\e x\in\e\Lw\cap D$.}
\end{align*}
In particular, $w_{\e}\to 1$ in $L^1(D)$. Moreover, we fix $h\leq\dist(A',A^c)$ and for every $i\in\{1,\ldots,N\}$ we set
\begin{align*}
A_i:=\left\{x\in A\colon \dist(x,A')<\frac{ih}{2N}\right\},
\end{align*}
and we also introduce the layer-like sets
\begin{align*}
S_\e^i:=\{x\in A'\cup B\colon \dist(x,A_{i+2}\setminus A_{i-3})<2M\e\}.
\end{align*}
For every $i\in\{2,\ldots,N\}$ let $\Theta_i$ be a smooth cut-off function between the sets $A_{i-1}$ and $A_i$, \ie $\Theta_i\equiv 1$ on $A_{i-1}$, $\Theta_i\equiv 0$ on $\R^d\setminus A_{i}$ and $\|\nabla\Theta_i\|_\infty\leq\frac{4N}{h}$. 

For every $i\in\{4,\ldots,N-2\}$ we now define a pair $(\hat{u}_\e^i,\hat{v}_\e^i)\in\mathcal{PC}_\e^\w\times\mathcal{PC}_\e^\w$ by setting
\begin{align*}
\hat{u}_\e^i(\e x):=\Theta_i(\e x)u_\e(\e x)+(1-\Theta_i(\e x))\tilde{u}_\e(\e x),
\end{align*}
and
\begin{align*}
\hat{v}_\e^i(\e x):=
\begin{cases}
\Theta_{i-2}(\e x)v_\e(\e x)+(1-\Theta_{i-2}(\e x))w_\e(\e x) &\mbox{if $\e x\in \e\Lw\cap A_{i-2}$,}\\
w_\e(\e x) &\mbox{if $\e x\in\e\Lw\cap(A_{i+1}\setminus A_{i-2})$,}\\
\Theta_{i+2}(\e x)w_\e(\e x)+(1-\Theta_{i+2}(\e x))\tilde{v}_{\e}(\e x) &\mbox{if $\e x\in\e\Lw\cap (D\setminus A_{i+1})$.}
\end{cases}
\end{align*}
Note that for fixed $i\in\{4,\ldots,N-2\}$ we have $(\hat{u}_\e^i,\hat{v}_\e^i)\to(u,1)$ in $L^2(D)\times L^2(D)$ by convexity. Moreover, we can estimate $F_{\e}(\w)(\hat{u}_\e^i,\hat{v}_\e^i,A'\cup B)$ as
\begin{align}\label{est:subad1}
F_{\e}(\w)(\hat{u}_\e^i,\hat{v}_\e^i, A'\cup B) &\leq F_{\e}(\w)(u_\e,v_\e, A_{i-3})+F_{\e}(\w)(\tilde{u}_\e,\tilde{v}_\e,B\setminus\overline{A_{i+2}})+F_{\e}(\w)\left(\hat{u}_\e^i,\hat{v}_\e^i,S_{\e}^i\right)\nonumber\\
&\leq F_{\e}(\w)(u_\e,v_\e,A)+F_{\e}(\w)(\tilde{u}_\e,\tilde{v}_\e,B)+F_{\e}(\w)\left(\hat{u}_\e^i,\hat{v}_\e^i,S_{\e}^i\right).
\end{align}
Hence, in view of \eqref{recovery}, estimate \eqref{fundest} follows if we can show that the last term on the right hand side of \eqref{est:subad1} can be bounded by $C/N$ for a suitable choice of $i$. We start estimating the bulk term. First observe that for every pair $(x,y)\in\mathcal{E}(\w)$ it holds that
\begin{align}\label{eq:graduhat}
\hat{u}_\e^i(\e x)-\hat{u}_\e^i(\e y) &=\Theta_i(\e x)(u_\e(\e x)-u_\e(\e y))+(1-\Theta_i(\e x))(\tilde{u}_\e(\e x)-\tilde{u}_\e (\e y))\nonumber\\
&\hspace*{1em}+(\Theta_i(\e x)-\Theta_i(\e y))(u_\e(\e y)-\tilde{u}_\e(\e y)).
\end{align}
In addition, for every $\e x\in\e\Lw\cap D$ the properties of the cut-off function $\Theta_i$ imply that
\begin{align*}
\hat{v}_\e^i(\e x)\Theta_i(\e x)\leq v_\e(\e x)\quad\mbox{and}\quad\hat{v}_\e^i(\e x)(1-\Theta_i(\e x))\leq \tilde{v}_\e(\e x).
\end{align*}
Thus, using the mean-value theorem for $\Theta_i$ and the convexity inequality $(a+b+c)^2\leq 3(a^2+b^2+c^2)$, from \eqref{eq:graduhat} we obtain
\begin{align*}
\hat{v}_\e^i(\e x)^2\left|\frac{\hat{u}_\e^i(\e x)-\hat{u}_\e^i(\e y)}{\e}\right|^2 
&\leq 3\,\Big(v_\e(\e x)^2\left|\frac{u_\e(\e x)-u_\e(\e y)}{\e}\right|^2+\tilde{v}_\e(\e x)^2\left|\frac{\tilde{u}_\e(\e x)-\tilde{u}_\e(\e y)}{\e}\right|^2\Big)\\
&\hspace*{1em}+C\frac{N^2}{h^2}|u_\e(\e y)-\tilde{u}_ \e(\e y)|^2
\end{align*}
for every pair $(x,y)\in\mathcal{E}(\w)$ with $\e x,\e y\in S_\e^{i}$. Summing the above estimate over all such pairs $(x,y)$ we infer that
\begin{equation}\label{eq:subad:bulk2}
F_{\e}^{b}(\w)\left(\hat{u}_\e^i,\hat{v}_\e^i,S_\e^{i}\right) \leq 3\Big(F_{\e}^{b}(\w)\left(u_\e,v_\e,S_\e^{i}\right)+F_{\e}^{b}(\w)\left(\tilde{u}_\e,\tilde{v}_\e,S_\e^{i}\right)\Big)+CN^2\hspace{-1em}\sum_{\e y\in\e\Lw\cap S_\e^{i}}\hspace{-1em}\e^d|u_\e(\e y)-\tilde{u}_ \e(\e y)|^2.
\end{equation}
Next we consider the surface term. Since the function $x\mapsto (x-1)^2$ is convex, we obtain from the definition of $\hat{v}_{\e}^i$ that
\begin{equation}\label{eq:boundlocal}
(\hat{v}_{\e}^i(\e x)-1)^2\leq 
(v_{\e}(\e x)-1)^2+(\tilde{v}_{\e}(\e x)-1)^2.
\end{equation}
For the finite differences, observe that we can equivalently write
\begin{equation*}
w_{\e}(\e x)=
\begin{cases}
\Theta_{i-2}(\e x)v_{\e}(\e x)+(1-\Theta_{i-2}(\e x))w_{\e}(\e x) &\mbox{if $\e x\in\e\Lw\setminus A_{i-2}$,}
\\
\Theta_{i+2}(\e x)w_{\e}(\e x)+(1-\Theta_{i+2}(\e x))\tilde{v}_{\e}(\e x) &\mbox{if $\e x\in\e\Lw\cap A_{i+1}$.}
\end{cases}
\end{equation*}
Then, by the analogue of formula \eqref{eq:graduhat}, we can estimate
\begin{align*}
|\hat{v}_{\e}(\e x)-\hat{v}_{\e}(\e y)|^2\leq &3\left(|v_{\e}(\e x)-v_{\e}(\e y)|^2+|\tilde{v}_{\e}(\e x)-\tilde{v}_{\e}(\e y)|^2+|w_{\e}(\e x)-w_{\e}(\e y)|^2\right)
\\
&+C\frac{N^2}{h^2}\e^2|v_{\e}(\e y)-\tilde{v}_{\e}(\e y)|^2,
\end{align*}
where we used that the distance between the sets $\R^d\setminus A_{i+1}$ and $A_{i-2}$ is of order $\tfrac{h}{N}\gg M\e$ to reduce the number of possible interactions with respect to the case-by-case definition of $\hat{v}_{\e}^i$. Inserting the elementary inequality
\begin{equation*}
|w_\e(\e x)-w_\e(\e y)|^2\leq\max\left\{|v_\e(\e x)-v_\e(\e y)|^2,|\tilde{v}_\e(\e x)-\tilde{v}_\e(\e y)|^2\right\},
\end{equation*}
the above estimate can be continued to
\begin{equation}\label{eq:vgradbound}
|\hat{v}_{\e}(\e x)-\hat{v}_{\e}(\e y)|^2\leq 4\left(|v_{\e}(\e x)-v_{\e}(\e y)|^2+|\tilde{v}_{\e}(\e x)-\tilde{v}_{\e}(\e y)|^2\right)+C\frac{N^2}{h^2}\e^2|v_{\e}(\e y)-\tilde{v}_{\e}(\e y)|^2.
\end{equation}
Combining \eqref{eq:boundlocal} and \eqref{eq:vgradbound} and summing over all pairs $(x,y)\in\mathcal{E}(\w)$ gives
\begin{equation*}
F_{\e}^{s}(\w)(\hat{v}_{\e}^i,S_{\e}^i)\leq 4\left(F_{\e}^{s}(\w)(v_{\e},S_{\e}^i)+F_{\e}^{s}(\w)(\tilde{v}_{\e},S_{\e}^i)\right)+CN^2\e\sum_{\e y\in\e\Lw\cap S_{\e}^i}\e^d|v_{\e}(\e y)-\tilde{v}_{\e}(\e y)|^2.
\end{equation*}
Combining the above inequality with \eqref{eq:subad:bulk2} then yields
\begin{align}\label{eq:beforeaveraging}
F_{\e}(\w)(\hat{u}_{\e}^i,\hat{v}_{\e}^i,S_{\e}^i)\leq& 4\left(F_{\e}(\w)(u_{\e},v_{\e},S_{\e}^i)+F_{\e}(\w)(\tilde{u}_{\e},\tilde{v}_{\e},S_{\e}^i)\right)\nonumber
\\
&+CN^2\sum_{\e y\in\e\Lw\cap S_{\e}^i}\e^d\left(|u_{\e}(\e y)-\tilde{u}_{\e}(\e y)|^2+\e|v_{\e}(\e y)-\tilde{v}_{\e}(\e y)|^2\right).
\end{align}
We eventually notice that for every $i,j\in\{4,\ldots,N-2\}$ we have $S_\e^i\cap S_\e^j=\emptyset$ whenever $|i-j|>5$ and $\e>0$ is small enough. Moreover, for $i\in\{4\ldots,N-2\}$ we have $S_\e^i\subset B$ and $S_\e^i\wcont A$. Thus, summing \eqref{eq:beforeaveraging} over $i\in\{4,\ldots,N-2\}$ and averaging, we find $i(\e)\in\{4,\ldots,N-2\}$ satisfying
\begin{align*}
& F_{\e}(\w) \left(\hat{u}_\e^{i(\e)},\hat{v}_\e^{i(\e)},S_\e^{i(\e)}\right)\leq\frac{1}{N-5}\sum_{i=4}^{N-2}F_{\e}(\w)\left(\hat{u}_\e^{i},\hat{v}_\e^{i},S_\e^{i}\right)
\\
&\hspace{1em}\leq\frac{C}{N}(F_{\e}(\w)(u_\e,v_\e,A)+F_\e(\tilde{u}_\e,\tilde{v}_\e,B))+CN\|u_{\e}-\tilde{u}_{\e}\|^2_{L^2(A)}+CN\e\|v_{\e}-\tilde{v}_{\e}\|^2_{L^2(A)},
\end{align*}
where we used Remark \ref{voronoi} to pass from the sum to the integral norms. Since $u_\e$ and $\tilde{u}_\e$ have the same limit in $L^2(D)$ and $0\leq v_\e,\tilde{v}_\e\leq 1$, thanks to \eqref{recovery} we obtain the required sequence satisfying \eqref{fundest} by setting $(\hat{u}_\e,\hat{v}_\e):=\big(\hat{u}_\e^{i(\e)},\hat{v}_\e^{i(\e)}\big)$.
\end{proof}
The next lemma is a standard consequence of Proposition \ref{subadd} and Lemma \ref{bounds}. A proof can be found for example in \cite[Lemma 5.9]{R18}.
\begin{lemma}[Inner regularity]\label{almostmeasure}
Let $u\in L^1(D)$. Then for any $A\in\Ard$ it holds that
\begin{equation*}
F^{\prime\prime}(\w)(u,1,A)=\sup_{A^{\prime}\subset\subset A}F^{\prime\prime}(\w)(u,1,A^{\prime}).
\end{equation*}
\end{lemma}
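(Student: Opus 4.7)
The plan is the following. The inequality $\sup_{A'\subset\subset A}F''(\w)(u,1,A')\leq F''(\w)(u,1,A)$ is immediate from the monotonicity of $F_\e(\w)$ in the set variable ($F_\e(\w)(u_\e,v_\e,A')\leq F_\e(\w)(u_\e,v_\e,A)$ whenever $A'\subset A$), which is preserved under the $\Gamma\hbox{-}\limsup$ construction that defines $F''(\w)$. For the reverse direction I may assume $F''(\w)(u,1,A)<+\infty$, the opposite case being trivial.

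A preparatory step will be to establish $u\in GSBV^2(A)$. By Remark \ref{trunc}, the truncations satisfy $F''(\w)(T_ku,1,A)\leq F''(\w)(u,1,A)$ for every $k\in\N$, and a recovery sequence for $T_ku$ may be chosen bounded by $k$, hence equiintegrable. The lower-bound part of Lemma \ref{compact} then forces $T_ku\in GSBV^2(A)$ together with the uniform estimate
\[
\int_A|\nabla T_ku|^2\dx+\Hd(S_{T_ku}\cap A)\leq c\,F''(\w)(u,1,A).
\]
Letting $k\to+\infty$ and using $|\nabla T_ku|\nearrow|\nabla u|$ together with $S_{T_ku}\subset S_u$ yields $u\in GSBV^2(A)$.

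The core of the argument combines almost-subadditivity with the upper bound of Lemma \ref{bounds}. Consider the finite positive Radon measure $\mu:=|\nabla u|^2\mathcal{L}^d\llcorner A+\Hd\llcorner(S_u\cap A)$ and fix $\eta>0$. Since $\mu(A\setminus\overline{U})\downarrow 0$ as $U$ exhausts $A$, I pick $U,V\in\Ard$ with $U\subset\subset V\subset\subset A$ and $\mu(A\setminus\overline{U})<\eta/c$, where $c$ is the constant from Lemma \ref{bounds}; both $U$ and $B:=A\setminus\overline{U}$ can be arranged to belong to $\Ard$ by e.g.\ picking a suitable level set of $\dist(\cdot,\partial A)$ and mollifying if needed. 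Since $U\cup B=A$, Proposition \ref{subadd} applied with the pair $U\subset\subset V$ in place of $A'\subset\subset A$ there yields
\[
F''(\w)(u,1,A)=F''(\w)(u,1,U\cup B)\leq F''(\w)(u,1,V)+F''(\w)(u,1,B).
\]
Lemma \ref{bounds} applied on $B$ (licit because $u\in GSBV^2(A)$ restricts to a function in $GSBV^2(B)$) bounds the last term by $c\,\mu(A\setminus\overline{U})<\eta$, while $V\subset\subset A$ gives $F''(\w)(u,1,V)\leq\sup_{A'\subset\subset A}F''(\w)(u,1,A')$. Combining these and letting $\eta\to 0$ closes the argument.

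The only delicate point is the verification $u\in GSBV^2(A)$ from the mere finiteness of $F''(\w)(u,1,A)$, which is handled by the truncation trick above; everything else is a routine combination of almost-subadditivity (Proposition \ref{subadd}) with the measure-type upper bound (Lemma \ref{bounds}), and exactly follows the familiar blueprint for inner regularity of $\Gamma$-limits of free-discontinuity energies.
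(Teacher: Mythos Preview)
Your approach is the standard one the paper alludes to (it cites \cite[Lemma 5.9]{R18} and says the result follows from Proposition \ref{subadd} and Lemma \ref{bounds}), and the core idea is sound. However, two technical slips should be corrected.

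First, the case distinction is backwards. For the inequality $F''(\w)(u,1,A)\leq\sup_{A'\subset\subset A}F''(\w)(u,1,A')$, the trivial case is $\sup=+\infty$, not $F''(\w)(u,1,A)=+\infty$. Assuming $F''(\w)(u,1,A)<+\infty$ does not by itself cover the situation where the supremum is finite but $F''(\w)(u,1,A)=+\infty$. The fix is immediate: assume the supremum is finite; your truncation argument, applied on each $A'\subset\subset A$ and exhausting $A$, still gives $u\in GSBV^2(A)$ with $\int_A|\nabla u|^2+\Hd(S_u\cap A)\leq c\,\sup$, and then Lemma \ref{bounds} in particular yields $F''(\w)(u,1,A)<+\infty$.

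Second, with $B=A\setminus\overline{U}$ you do \emph{not} have $U\cup B=A$: rather $U\cup B=A\setminus\partial U$. What you want is $V\cup B=A$, which holds because $\overline{U}\subset V$. But then Proposition \ref{subadd} requires a third set $W$ with $V\subset\subset W\subset\subset A$, yielding
\[
F''(\w)(u,1,A)=F''(\w)(u,1,V\cup B)\leq F''(\w)(u,1,W)+F''(\w)(u,1,B),
\]
and you conclude exactly as before with $W$ in place of $V$. With these two adjustments the argument is complete and coincides with the paper's intended proof.
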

\noindent Now we are in a position to establish the main result of this subsection.
\begin{proof}[Proof of Proposition \ref{limitsbvp}]
Having at hand Remark \ref{trunc} and Lemmata \ref{local}, \ref{compact}, \ref{bounds}, and \ref{almostmeasure} as well as Proposition \ref{subadd}, the well-known arguments how to apply the general integral representation theorem \cite[Theorem 1]{BFLM} in order to conclude, can be found, \eg in \cite[Proposition 5.2]{R18}. 
\end{proof}  

\subsection{Characterization of the bulk density}
In this subsection we argue that the function $h$ given by Proposition \ref{limitsbvp} agrees with the density of the $\Gamma$-limit of the sequence of discrete quadratic functionals $u\mapsto F_{\e}^{b}(\w)(u,1,D)$ defined in \eqref{eq:defbulk}. 

\begin{proposition}[Characterization of the bulk density]\label{p.gradientpartsequal}
Let $\e_n$ and $F(\w)$ be as in Proposition \ref{limitsbvp}. Then the $\Gamma$-convergence result of Theorem \ref{ACGmain} holds along the sequence $\e_n$ and for a.e. $x_0\in D$ and every $\xi\in\R^{d}$ it holds that
\begin{equation*}
|B_1|h(\w,x_0,\xi)=\lim_{\varrho\to 0}\varrho^{-d}F(\w)(u_{x_0,\xi},1,B_{\varrho}(x_0))=|B_1|f(\w,x_0,\xi),
\end{equation*}
where $f(\w,\cdot,\cdot)$ is an (equivalent) integrand of the $\Gamma$-limit of $F_{\e_n}^b(\w)(\cdot,1,D)$ (cf. Theorem \ref{ACGmain}).	
\end{proposition}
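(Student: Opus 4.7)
The plan is to show that $F(\w)(u,1,A)=\int_A f(\w,x,\nabla u)\dx$ for every $u\in W^{1,2}(A)$ and every $A\in\Ard$, and then to deduce the pointwise identification of $h$ with $f$ by Lebesgue differentiation applied to affine functions. As a preliminary step, by a standard diagonal extraction we pass to a further subsequence of $\e_n$ (not relabeled) along which both the quadratic functionals $F_{\e_n}^b(\w)(\cdot,1,A)$ and the weak-membrane energies $G_{\e_n,\beta}(\w)(\cdot,A)$ $\Gamma$-converge for every $A\in\Ard$. Theorems \ref{ACGmain} and \ref{t.weakmembrane} identify the limits; crucially, the bulk density of the weak-membrane $\Gamma$-limit $G_\beta(\w)$ coincides with $f$.

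For the $\Gamma$-limsup inequality, given $u\in W^{1,2}(A)$ take a recovery sequence $u_n\to u$ in $L^2$ for $F_{\e_n}^b(\w)(\cdot,1,A)$ and pair it with $v_n\equiv 1\in\mathcal{PC}_{\e_n}^\w$. Since $F_{\e_n}^s(\w)(1,A)=0$, one has $F_{\e_n}(\w)(u_n,1,A)=F_{\e_n}^b(\w)(u_n,1,A)\to\int_A f(\w,x,\nabla u)\dx$ with $(u_n,1)\to(u,1)$ in $L^1(D)\times L^1(D)$, so $F(\w)(u,1,A)\leq\int_A f(\w,x,\nabla u)\dx$. For the matching $\Gamma$-liminf, fix any $(u_n,v_n)\to(u,1)$ in $L^1(D)\times L^1(D)$. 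Discarding the nonnegative gradient-of-$v$ contribution in $F_{\e_n}^s$ and applying Proposition \ref{prop:ATandWeak} yield
\begin{equation*}
F_{\e_n}(\w)(u_n,v_n,A)\geq F_{\e_n}^b(\w)(u_n,v_n,A)+\frac{\beta}{2}\sum_{\e_n x\in\e_n\Lw\cap A}\e_n^{d-1}(v_n(\e_n x)-1)^2\geq G_{\e_n,\beta}(\w)(u_n,A),
\end{equation*}
the last inequality being precisely the pointwise minimization in Proposition \ref{prop:ATandWeak}. Since $u_n\to u\in W^{1,2}(A)$ in $L^1$ and $S_u=\emptyset$, Theorem \ref{t.weakmembrane} gives $\liminf_n F_{\e_n}(\w)(u_n,v_n,A)\geq G_\beta(\w)(u,A)=\int_A f(\w,x,\nabla u)\dx$. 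Combining this with the upper bound yields $F(\w)(u,1,A)=\int_A f(\w,x,\nabla u)\dx$.

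Comparing the latter identity with the integral representation $F(\w)(u,1,A)=\int_A h(\w,x,\nabla u)\dx$ from Proposition \ref{limitsbvp} and specializing to affine functions $u=u_{x_0,\xi}$ on balls $B_\varrho(x_0)\wcont D$ produces
\begin{equation*}
\int_{B_\varrho(x_0)}h(\w,x,\xi)\dx=\int_{B_\varrho(x_0)}f(\w,x,\xi)\dx\qquad\text{for every }\xi\in\R^d.
\end{equation*}
Both densities are dominated by $C|\xi|^2$ in view of Theorem \ref{ACGmain} and Lemma \ref{bounds}, so Lebesgue's differentiation theorem, applied for each $\xi$ in a countable dense subset of $\R^d$ and extended to arbitrary $\xi$ by the quadraticity of $f$, produces a full-measure set of $x_0\in D$ on which the two Lebesgue-point identities
\begin{equation*}
|B_1|h(\w,x_0,\xi)=\lim_{\varrho\to 0}\varrho^{-d}F(\w)(u_{x_0,\xi},1,B_\varrho(x_0))=|B_1|f(\w,x_0,\xi)
\end{equation*}
hold simultaneously for every $\xi\in\R^d$. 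The only genuinely delicate point is the simultaneous extraction of a single subsequence that works for all three $\Gamma$-convergences at once; this is taken care of by the preliminary diagonal argument, and the remainder of the proof reduces to straightforward comparisons of competitors using Proposition \ref{prop:ATandWeak} as the essential link between the Ambrosio--Tortorelli energies and the weak-membrane energies.
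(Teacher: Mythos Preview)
Your argument follows the same route as the paper: the upper bound via $v\equiv 1$, the lower bound via Proposition~\ref{prop:ATandWeak} and the weak-membrane comparison of Theorem~\ref{t.weakmembrane}, and Lebesgue differentiation to localize. Two points deserve tightening. First, your density extension ``by the quadraticity of $f$'' justifies only the identity $\lim_{\varrho}\varrho^{-d}F(\w)(u_{x_0,\xi},1,B_\varrho(x_0))=|B_1|f(\w,x_0,\xi)$ on a $\xi$-independent null set; to obtain the first equality $|B_1|h(\w,x_0,\xi)=\lim_{\varrho}\varrho^{-d}\int_{B_\varrho(x_0)}h(\w,x,\xi)\dx$ for \emph{every} $\xi$ on that same set you also need continuity of $h(\w,x,\cdot)$ for a.e.\ $x$ (e.g.\ from the Carath\'eodory/convexity property delivered by the BFLM representation), which the paper sidesteps by citing \cite[Lemma~5.11]{R18}. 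Second, the proposition asserts that the $\Gamma$-limit of $F_{\e_n}^b(\w)(\cdot,1,D)$ exists along the \emph{original} sequence $\e_n$, not just along your further subsequence; you should close with the Urysohn argument the paper gives: since $h$ is fixed by $F(\w)$ and every further subsequence of $F_{\e_n}^b$ $\Gamma$-converges to $\int h$, the whole sequence does.
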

\begin{proof}
The first equality characterizing the function $h$, which does not rely on the discrete functionals, but only on the structure and growth of the continuum limit, can be proven as in \cite[Lemma 5.11]{R18}. Hence we only prove the second inequality. By Theorem \ref{ACGmain}, upon passing temporarily to a further subsequence (not relabeled), we may assume that the sequence $F_{\e_n}^b(\w)(\cdot,1,D)$ $\Gamma$-converges to some integral functional $F^b(\w)(\cdot,D)$ with density $f(\w,\cdot,\cdot)$. Fix $x_0\in D$ satisfying the first equality.

Since $v_{\e}\equiv 1$ is an admissible phase-field for any trial recovery sequence of the affine function $u_{x_0,\xi}$ and $F_{\e}(\w)(u,1,B_{\rho}(x_0))=F_{\e}^{b}(\w)(u,1,B_{\rho}(x_0))$ for every $u\in\mathcal{PC}_{\e}^{\w}$, we deduce that 
\begin{equation*}
\varrho^{-d}F(\w)(u_{x_0,\xi},1,B_{\varrho}(x_0))\leq |B_1|\dashint_{B_{\rho}(x_0)}f(\w,x,\xi)\,\mathrm{d}x.
\end{equation*}
In order to prove the reverse inequality, note that due to Proposition \ref{prop:ATandWeak} we have
\begin{equation*}
\varrho^{-d}F_{\e}(\w)(u,v,B_{\varrho}(x_0))\geq \varrho^{-d}G_{\e,\beta}(\w)(u,B_{\varrho}(x_0)).
\end{equation*}
Hence, possibly passing to a further subsequence, we obtain that 
\begin{align*}
\varrho^{-d}F(\w)(u_{x_0,\xi},1,B_{\varrho}(x_0))\geq\varrho^{-d}\, \Gamma\hbox{-}\lim_{\e_n\to 0}G_{\e_n,\beta}(\w)(u_{x_0,\xi},B_{\varrho}(x_0))
=|B_1|\dashint_{B_{\varrho}(x_0)}f(\w,x,\xi)\dx,
\end{align*}
where the last equality follows from Theorem~\ref{t.weakmembrane} and the fact that $u_{x_0,\xi}\in W^{1,2}_{\rm loc}(\R^d)$. 
Using the uniform local Lipschitz continuity of $f$ in the third variable (which is a consequence of the quadratic dependence and local boundedness) one can pass to the limit in $\rho$ by Lebesgue's differentiation theorem except for a null set independent of $\xi$, which yields
\begin{equation*}
\lim_{\varrho\to 0}\varrho^{-d}F(\w)(u_{x_0,\xi},1,B_{\varrho}(x_0))= |B_1|f(\w,x_0,\xi)
\end{equation*}
for a.e. $x_0\in D$ and every $\xi\in\R^d$. 

Hence we proved the claim along the chosen subsequence. In particular, along any subsequence of $\e_n$ the $\Gamma$-limit of $F_{\e}^{b}(\w)(\cdot,1,D)$ is uniquely defined by the integrand $h(\w,x,\xi)$, so that the $\Gamma$-limit along the sequence $\e_n$ exists by the Urysohn-property of $\Gamma$-convergence, although the integrand might differ on a negligible set depending on the subsequence.
\end{proof}

\subsection{Characterization of the surface density} 

Having identified the bulk term, we now show that the surface integrand $\varphi(\w,x,a,\nu)$ can be computed with the discrete functional $F_{\e}(\w)$ restricted to functions $u$ taking only the two values $a$ and $0$ and functions $v$ that vanish on all couples $(\e x,\e y)$ where $u$ jumps. This implies in particular that along such sequences $F_{\e}^{b}(\w)(u,v)=0$, so that $F_{\e}(\w)(u,v)=F_{\e}^{s}(\w)(v)$. Nevertheless the variable $u$ enters the procedure in the form of a non-convex constraint (cf. \eqref{def:surf:int}).

We first study the asymptotic minimization problems given by Proposition \ref{limitsbvp} and their connection to boundary value problems for the discrete functionals $F_{\e}(\w)$. As a first step, we compare the two quantities
\begin{equation}\label{def:med}
\begin{split}
m_{\e,\delta}^{\w}(\bar{u},\bar{v},A)&=\inf\{F_{\e}(\w)(u,v,A):\;(u,v)\in\mathcal{PC}^{\w}_{\e,\delta}(\bar{u},A)\times\mathcal{PC}^{\w}_{\e,M\e}(\bar{v},A)\},\\
m^\w(\bar{u},A)&=\inf\{F(\w)(u,1,A):\;u\in SBV^2(A),\,u=\bar{u}\text{ in a neighborhood of }\partial A\},
\end{split}
\end{equation}
where the limit functional $F(\w)$ is given (up to subsequences) by Proposition \ref{limitsbvp} and $\mathcal{PC}_{\e,\delta}^{\w}(\bar{u},A)$ is as in \eqref{def:discreteboundarycond}. 
Along the subsequence $\e_n$ provided by Proposition \ref{limitsbvp} we can prove the following result about the asymptotic behavior of $m_{\e_n,\delta}^{\w}(\bar{u},\bar{v},Q)$ on cubes $Q=Q_{\nu}(x_0,\varrho)$ when first $\e_n\to 0$ and then $\delta\to 0$.
\begin{lemma}[Approximation of minimum values]\label{approxminprob}
Let $\e_n$ and $F(\w)$ be as in Proposition \ref{limitsbvp}. Then, for $u_{x_0,\nu}^{a,0}$ as in \eqref{eq:purejump} and $v_{x_0,\nu}^{\e}$ given by \eqref{eq:barvboundary}, it holds that
\begin{align*}
m^\w(u_{x_0,\nu}^{a,0},Q_{\nu}(x_0,\varrho))&=\lim_{\delta\to 0}\liminf_{n\to+\infty} m_{\e_n,\delta}^\w(u_{x_0,\nu}^{a,0},v_{x_0,\nu}^{\e_n},Q_{\nu}(x_0,\rho))
\\
&=\lim_{\delta\to 0}\limsup_{n\to+\infty}m_{\e_n,\delta}^\w(u_{x_0,\nu}^{a,0},v_{x_0,\nu}^{\e_n},Q_{\nu}(x_0,\varrho))
\end{align*}
with the cube $Q_{\nu}(x_0,\varrho)$ defined in \eqref{eq:defcube} and the succeeding line.
\end{lemma}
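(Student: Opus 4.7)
The plan is to prove the chain of inequalities
\begin{equation*}
m^\w(\bar u, Q) \leq \lim_{\delta \to 0}\liminf_n m_{\e_n,\delta}^\w \leq \lim_{\delta \to 0}\limsup_n m_{\e_n,\delta}^\w \leq m^\w(\bar u, Q),
\end{equation*}
where I abbreviate $\bar u = u_{x_0,\nu}^{a,0}$ and $Q = Q_\nu(x_0, \varrho)$. Since the middle inequality is trivial, the task splits into a lower bound obtained via the $\Gamma$-$\liminf$ inequality applied to discrete minimizers, and an upper bound obtained by gluing a recovery sequence with an explicit ``pure jump'' sequence supported near the boundary.

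\textbf{Lower bound.} Fix $\delta > 0$ and pick almost-minimizers $(u_n^\delta, v_n^\delta)$ for $m_{\e_n,\delta}^\w(\bar u, v_{x_0,\nu}^{\e_n}, Q)$. Since $\bar u$ takes values in $\{0,a\}$, Remark \ref{trunc} allows truncation of $u_n^\delta$ at level $|a|$ without violating the discrete boundary condition and without increasing the energy; hence $(u_n^\delta)$ is $L^\infty$-bounded. Lemma \ref{compact} then yields, along a subsequence, $v_n^\delta \to 1$ in $L^1(Q)$ and $u_n^\delta \to u^\delta$ in $L^1(Q)$ with $u^\delta \in SBV^2(Q)$. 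The discrete boundary condition passes to the limit in the form $u^\delta = \bar u$ a.e.\ on $\partial_\delta Q \cap Q$: indeed, the piecewise-constant interpolations of $u_n^\delta$ coincide with $\bar u$ off an $O(\e_n)$-neighborhood of $H_\nu(x_0)$, whose measure vanishes. Hence $u^\delta$ is admissible for $m^\w(\bar u, Q)$, and Proposition \ref{limitsbvp} gives
\begin{equation*}
m^\w(\bar u, Q) \leq F(\w)(u^\delta, 1, Q) \leq \liminf_n F_{\e_n}(\w)(u_n^\delta, v_n^\delta, Q) = \liminf_n m_{\e_n,\delta}^\w + o(1).
\end{equation*}
Letting $\delta \to 0$ completes this step.

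\textbf{Upper bound.} Given $\eta > 0$, pick $u^* \in SBV^2(Q)$ with $u^* = \bar u$ on a neighborhood $\mathcal{N} \supset Q \setminus Q_0$ of $\partial Q$ (for some $Q_0 \subset\subset Q$) and $F(\w)(u^*, 1, Q) \leq m^\w(\bar u, Q) + \eta$. Let $(u_n^*, v_n^*) \to (u^*, 1)$ be a recovery sequence along $\e_n$. Independently, define an explicit sequence $(\bar u_n, \bar v_n)$ by $\bar u_n(\e_n x) := \bar u(\e_n x)$ and by taking $\bar v_n$ to equal $0$ on lattice points within distance $M\e_n$ of $H_\nu(x_0)$ and to rise toward $1$ along a near-optimal discrete profile elsewhere; then $\bar v_n = v_{x_0,\nu}^{\e_n}$ on $\partial_{M\e_n} Q$, and a short calculation shows $F_{\e_n}(\w)(\bar u_n, \bar v_n, B) \leq C\mathcal{H}^{d-1}(H_\nu(x_0) \cap B) + o(1)$ for any open $B \subset Q$. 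Now run the layered-cutoff / averaging construction of the proof of Proposition \ref{subadd}, with cutoffs supported in $Q \setminus \overline{Q_0}$, joining $(u_n^*, v_n^*)$ on the interior with $(\bar u_n, \bar v_n)$ on an outer shell $B \supset \partial_\delta Q$ (for $\delta$ small enough that $B \subset\mathcal{N}$). Because $u^* = \bar u$ throughout the gluing annulus, both sequences share the same $L^2$-limit $\bar u$ there, so the cross terms of the form $N^2 \|u_n^* - \bar u_n\|_{L^2}^2$ appearing in the subadditivity estimate vanish as $n \to \infty$. The output is a pair $(\hat u_n^N, \hat v_n^N) \in \mathcal{PC}_{\e_n, \delta}^\w(\bar u, Q) \times \mathcal{PC}_{\e_n, M\e_n}^\w(v_{x_0,\nu}^{\e_n}, Q)$ satisfying
\begin{equation*}
\limsup_n F_{\e_n}(\w)(\hat u_n^N, \hat v_n^N, Q) \leq (1 + C/N)\bigl(F(\w)(u^*, 1, Q) + C\mathcal{H}^{d-1}(H_\nu(x_0) \cap B)\bigr).
\end{equation*}
A diagonal argument in $N$, followed by $\delta \to 0$ (so that $B$ can be taken as a thin shell with $\mathcal{H}^{d-1}(H_\nu(x_0) \cap B) \to 0$) and $\eta \to 0$, yields the second inequality.

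\textbf{Main obstacle.} The delicate step is the upper bound, specifically matching the mismatched discrete boundary widths ($\delta$ for $u$, the much smaller $M\e_n$ for $v$) while coupling two distinct sequences. The averaging scheme of Proposition \ref{subadd} provides the blueprint for the gluing arithmetic, and the vanishing of the cross terms hinges crucially on the fact that $u^*$ coincides with $\bar u$ throughout the entire gluing annulus; the surface cost of the explicit boundary profile is then absorbed thanks to $\mathcal{H}^{d-1}(H_\nu(x_0) \cap \partial_\delta Q) \to 0$ as $\delta \to 0$.
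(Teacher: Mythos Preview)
Your proof is correct and follows essentially the same approach as the paper: a $\Gamma$-liminf argument on discrete almost-minimizers for the lower bound, and for the upper bound the layered cut-off / averaging construction of Proposition~\ref{subadd} to glue a recovery sequence for a near-minimizer $u^*$ with the explicit boundary pair $(\bar u,\bar v_\e)$, exploiting that $u^*=\bar u$ on the gluing annulus so the $L^2$ cross terms vanish. The only cosmetic difference is the order of limits in the last step---the paper first sends $\delta\to 0$ and $N\to\infty$ and then shrinks the annulus $Q\setminus Q'$ (which is decoupled from $\delta$), whereas you shrink $B$ together with $\delta$; both orderings work.
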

\begin{proof}
By monotonicity the limits with respect to $\delta$ exist. To reduce notation, we replace $\e_n$ by $\e$ in what follows and write $Q=Q_{\nu}(x_0,\varrho)$. Moreover, we set $\bar{u}:=u_{x_0,\nu}^{a,0}$ and $\bar{v}_\e:=v_{x_0,\nu}^{\e}$. For every $\e>0$ let $u_{\e}\in\mathcal{PC}^{\w}_{\e,\delta}({\bar{u}},Q)$ and $v_{\e}\in\mathcal{PC}_{\e,M\e}^{\w}(\bar{v}_{\e},Q)$ be such that $m_{\e,\delta}^\w({\bar{u}},\bar{v}_{\e},Q)=F_{\e}(\w)(u_{\e},v_{\e},Q)$. Note that these minimizers exist as the optimization problem is finite dimensional. Due to Remark \ref{trunc} we can assume without loss of generality that $|u_{\e}(\e x)|\leq |a|$ for all $x\in\Lw$. Testing the pointwise evaluation of the functions $\bar{u}$ and $\bar{v}_{\e}$ as competitors for the minimization problem, we see that for $\e$ small enough
\begin{equation}\label{eq:bounded}
F_{\e}(\w)(u_{\e},v_{\e},Q)\leq F_{\e}(\w)(\bar{u},\bar{v}_{\e},Q)\leq F_{\e}^{s}(\w)(\bar{v}_{\e},Q)\leq C,
\end{equation}
where in the second inequality we used the implication
\begin{equation}\label{eq:implication}
u_{x_0,\nu}^{a,0}(\e x)\neq u_{x_0,\nu}^{a,0}(\e y)\quad\;\implies\quad\;\begin{cases}
|\langle \e x-x_0,\nu\rangle|\leq |\e x-\e y|\leq M\e
\\
|\langle \e y-x_0,\nu\rangle|\leq |\e x-\e y|\leq M\e
\end{cases}
\quad\;\implies\quad\; \bar{v}_{\e}(\e x)=\bar{v}_{\e}(\e y)=0.
\end{equation}
and the last bound in \eqref{eq:bounded} follows from counting lattice points in an $2M\e$ tubular neighborhood of the hyperplane $H_\nu(x_0)$. Hence Lemma \ref{compact} yields that, up to a subsequence (not relabeled), $u_{\e}\to u$ in $L^1(Q)$ for some $u\in SBV^2(Q)$ (recall the $L^{\infty}$-bound) and $v_{\e}\to 1$ in $L^1(Q)$. Using Remark \ref{voronoi}, we infer that $u={\bar{u}}$ on $(\R^d\setminus Q)+B_{\delta}(0)$. Consequently $u$ is admissible in the infimum problem defining $m^\w({\bar{u}},Q)$ and and the $\Gamma$-convergence result of Proposition \ref{limitsbvp} yields
\begin{equation*}
m^\w({\bar{u}},Q)\leq F(\w)(u,1,Q)\leq\liminf_{\e} F_{\e}(\w)(u_{\e},v_{\e},Q)\leq\liminf_{\e} m_{\e,\delta}^\w({\bar{u}},\bar{v}_{\e},Q).
\end{equation*}
As $\delta>0$ was arbitrary, we conclude that $m^\w({\bar{u}},Q)\leq\lim_{\delta\to 0}\liminf_{\e}m_{\e,\delta}^\w({\bar{u}},\bar{v}_{\e},Q)$.
	
In order to prove the second inequality, for given $\theta>0$ we let $u\in SBV^2(D)$ be such that $u={\bar{u}}$ in a neighborhood of $\partial Q$ and $F(\w)(u,1,Q)\leq m^\w({\bar{u}},Q)+\theta$. By Remark \ref{trunc} we can also assume that $u\in L^{\infty}(D)$. Due to $\Gamma$-convergence we find $u_{\e},v_{\e}\in\mathcal{PC}^{\w}_{\e}$ converging to $u$ and $1$ in $L^2(D)$ (again we rely on Remark \ref{trunc}) and such that
\begin{equation}\label{eq:rec}
\lim_{\e\to 0}F_{\e}(\w)(u_{\e},v_{\e},Q)=F(\w)(u,1,Q).
\end{equation}
Our goal is to modify both sequences such that they attain the discrete boundary conditions. The argument is closely related to the proof of Proposition \ref{subadd}, so we just sketch some parts. Since $u=\bar{u}$ in a neighborhood of $\partial Q$, we find equally oriented cubes $Q^{\prime}\subset\subset Q^{\prime\prime}\subset\subset Q$ with
\begin{equation}\label{eq:uonlayer}
u={\bar{u}} \quad\text{ on }Q\setminus Q^{\prime}. 
\end{equation}
Fix $N\in\mathbb{N}$. For $h\leq \dist(Q^{\prime},\partial Q^{\prime\prime})$ and $i\in\{1,\dots,N\}$ we define the sets
\begin{equation*}
Q_i:=\left\{x\in Q:\;\dist(x,Q^{\prime})<i\frac{h}{2N}\right\}
\end{equation*}
and consider an associated cut-off function $\Theta_i\in C^{\infty}_c(Q_{i},[0,1])$ such that $\Theta_i\equiv 1$ on $Q_{i-1}$ and $\|\nabla\Theta_i\|_{\infty}\leq \frac{4N}{h}$. Set $w_{\e}=\min\{v_{\e},\bar{v}_{\e}\}$ and define $u^i_{\e},v_{\e}^i\in \mathcal{PC}_{\e}^{\w}$ by
\begin{equation*}
\hat{u}^i_{\e}(\e x)=\Theta_i(\e x)u_{\e}(\e x)+(1-\Theta_i(\e x)){\bar{u}}(\e x)
\end{equation*}
and
\begin{equation*}
\hat{v}_\e^i(\e x):=
\begin{cases}
\Theta_{i-2}(\e x)v_\e(\e x)+(1-\Theta_{i-2}(\e x))w_\e(\e x) &\mbox{if $\e x\in \e\Lw\cap Q_{i-2}$,}\\
w_\e(\e x) &\mbox{if $\e x\in\e\Lw\cap(Q_{i+1}\setminus Q_{i-2})$,}\\
\Theta_{i+2}(\e x)w_\e(\e x)+(1-\Theta_{i+2}(\e x))\bar{v}_{\e}(\e x) &\mbox{if $\e x\in\e\Lw\cap (D\setminus Q_{i+1})$.}
\end{cases}
\end{equation*}
Since we may assume that $u_{|D\setminus Q}={\bar{u}}$, by \eqref{eq:uonlayer} we have that $u^i_{\e}\to u$ in $L^1(D)$. Moreover, also $v^i_{\e}\to 1$ in $L^1(D)$ for all $i\in\{4,\dots,N-2\}$. Setting $S_{\e}^{i}:=\{x\in Q:\;\dist(x,Q_{i+2}\setminus{Q_{i-3}})<2M\e\}$, the energy can be estimated via
\begin{align}\label{eq:splitineq}
F_{\e}(\w)(\hat{u}^i_{\e},\hat{v}_{\e}^i,Q)&\leq F_{\e}(\w)(u_{\e},v_{\e},Q_{i-3})+F_{\e}(\w)(\bar{u},\bar{v}_{\e},Q\setminus\overline{Q_{i+2}})
+F_{\e}(\w)(\hat{u}_{\e}^i,\hat{v}_{\e}^i,S_{\e}^i)\nonumber
\\
&\leq F_{\e}(\w)(u_{\e},v_{\e},Q)+F_{\e}^{s}(\w)(\bar{v}_{\e},Q\setminus\overline{Q'})
+F_{\e}(\w)(\hat{u}_{\e}^i,\hat{v}_{\e}^i,S_{\e}^i),
\end{align}
where we used again \eqref{eq:implication}. The behavior of first term in the last line is controlled by \eqref{eq:rec}. In order to bound the second one, note that the structure of $\bar{v}_{\e}$ (cf. \eqref{eq:barvboundary}) and Remark \ref{voronoi} imply that
\begin{align}\label{eq:counting}
F_{\e}^{s}(\w)(\bar{v}_{\e},Q\setminus \overline{Q'})&\leq C \e^{d-1}\#\left\{\e x\in\e\Lw\cap Q\setminus \overline{Q'}:\dist(\e x,H_{\nu}(x_0))\leq 2M\e\right\}\nonumber
\\
&\leq C \frac{1}{\e} \left|(Q\setminus Q')\cap H_{\nu}(x_0)+B_{2M\e}(0)\right|.
\end{align}
Since the set $(\overline{Q\setminus Q'})\cap H_{\nu}(x_0)$ admits a $(d-1)$-dimensional Minkowski content that agrees (up to a multiplicative constant) with the Hausdorff measure of the closure, we conclude that
\begin{equation}\label{eq:closetobd}
\limsup_{\e\to 0}F_{\e}^{s}(\w)(\bar{v}_{\e},Q\setminus \overline{Q'})\leq C\mathcal{H}^{d-1}((Q\setminus Q')\cap H_{\nu}(x_0)),
\end{equation} 
where we used that $\mathcal{H}^{d-1}(\partial Q\cap H_{\nu}(x_0))=0$. For the last term $F_{\e}(\w)(\hat{u}_{\e}^i,\hat{v}_{\e}^i,S_{\e}^i)$ in \eqref{eq:splitineq} one can use the same arguments already used to prove \eqref{eq:beforeaveraging} in order to show that
\begin{align*}
F_{\e}(\w)(\hat{u}_{\e}^i,\hat{v}_{\e}^i,S_{\e}^i)\leq& C\left(F_{\e}(\w)(u_{\e},v_{\e}, S_{\e}^i)+F_{\e}(\w)(\bar{u},\bar{v}_{\e},S_{\e}^i)\right)
\\
&+CN^2\sum_{\e y\in \e\Lw\cap S^i_{\e}}\e^d\left(|u_{\e}(\e x)-\bar{u}(\e x)|^2+\e|v_{\e}(\e y)-\bar{v}_{\e}(\e y)|^2\right).
\end{align*}
By construction we have $S_{\e}^i\cap S_{\e}^j=\emptyset$ for $|i-j|> 5$ and $S_{\e}^i\subset\subset Q\setminus \overline{Q^{\prime}}$ for $i\in\{4,\dots,N-2\}$. Averaging the previous inequality we find an index $i(\e)\in\{4,\dots,N-2\}$ such that
\begin{align*}
F_{\e}(\w)(\hat{u}_{\e}^{i(\e)},\hat{v}_{\e}^{i(\e)},S_{\e}^{i(\e)})\leq&\frac{1}{N-5}\sum_{i=4}^{N-2}F_{\e}(\w)(\hat{u}_{\e}^i,\hat{v}_{\e}^i,S_{\e}^i)
\\
\leq&\frac{C}{N}\big(F_{\e}(\w)(u_{\e},v_{\e},Q)+F_{\e}^{s}(\w)(\bar{v}_{\e},Q\setminus \overline{Q^{\prime}})\big)
\\
&+CN\left(\|u_{\e}-{\bar{u}}_{\e}\|^2_{L^2(Q\setminus Q^{\prime})}+\e\|v_{\e}-\bar{v}_{\e}\|^2_{L^2(Q\setminus Q^{\prime})}\right).
\end{align*}
Due to \eqref{eq:uonlayer} we have that $u_{\e}-\bar{u}_{\e}\to 0$ in $L^2(Q\setminus Q^{\prime})$. Moreover, $\hat{u}_{\e}^{i(\e)}(\e x)=\bar{u}(\e x)$ and $\hat{v}_{\e}^{i(\e)}=\bar{v}_{\e}(\e x)$ for all $\e x\in \e\Lw\cap Q\setminus Q^{\prime\prime}$, so that $\hat{u}^{i(\e)}_{\e}\in\mathcal{PC}_{\e,\delta}^{\w}(\bar{u},Q)$ and $\hat{v}_{\e}^{i(\e)}\in\mathcal{PC}_{\e,M\e}(\bar{v}_{\e},Q)$ for all $\e,\delta>0$ small enough. Hence from \eqref{eq:rec}, \eqref{eq:splitineq}, and \eqref{eq:closetobd} we deduce that
\begin{align*}
\limsup_{\e} m_{\e,\delta}^\w({\bar{u}},\bar{v}_\e,Q)&\leq\limsup_{\e} F_{\e}(\w)(\hat{u}^{i(\e)}_{\e},\hat{v}_{\e}^{i(\e)},Q)
\\
&\leq \left(1+\frac{C}{N}\right)\left(m^\w(\bar{u},Q)+\theta+\mathcal{H}^{d-1}((Q\setminus Q^{\prime})\cap H_{\nu}(x_0))\right)
\end{align*}
As $\theta>0$ was arbitrary, the claim follows letting first $\delta\to 0$, then $N\to +\infty$ and finally $Q^{\prime}\uparrow Q$.
\end{proof}

Our next aim is to provide a simplified form of the discrete minimization problem that is suitable for subadditivity estimates. To this end we will compare the two quantities $m_{\e,\delta}^{\w}(u_{x_0,\nu}^{a,0},v_{x_0,\nu}^{\e},Q_{\nu}(x_0,\varrho))$ and $\varphi_{\e,\delta}^\w(u_{x_0,\nu}^{a,0},Q_\nu(x_0,\varrho))$ given by \eqref{def:surf:int}.
Namely, we show that we have the following equivalent characterization for the surface density.
\begin{lemma}[Construction of a competitor for $\varphi_{\e,\delta}^\w$]\label{separationofscales1}
Let $\e_n\to 0$. Then, for all $x_0\in D$, all $a\in\R$ and all $\nu\in S^{d-1}$ it holds that
\begin{equation*}
\limsup_{\varrho\to 0}\varrho^{1-d}\lim_{\delta\to 0}\limsup_{n\to+\infty} \varphi_{\e_n,\delta}^{\w}(u_{x_0,\nu}^{a,0},Q_{\nu}(x_0,\varrho))
\\
=\limsup_{\varrho\to 0}\varrho^{1-d}\lim_{\delta\to 0}\limsup_{n\to+\infty}m_{\e_n,\delta}^{\w}(u_{x_0,\nu}^{a,0},v_{x_0,\nu}^{\e_n},Q_{\nu}(x_0,\varrho)).
\end{equation*}
\end{lemma}
\begin{remark}\label{r.jumpindependence}
The condition $F_{\e}^{b}(\w)(u,v,Q_{\nu}(x_0,\varrho))=0$ in the definition of $\varphi_{\e,\delta}^{\w}(u_{x_0,\nu}^{a,0},Q_\nu(x_0,\varrho))$ implies that the latter is independent of the jump opening $a$. More precisely, for every $a\in\R$ we have $\varphi_{\e,\delta}^{\w}(u_{x_0,\nu}^{a,0},Q_\nu(x_0,\varrho))=\varphi_{\e,\delta}^{\w}(u_{x_0,\nu}^{1,0},Q_\nu(x_0,\varrho))$. Thus, combining \eqref{derivationformula} with Lemma~\ref{approxminprob} and Lemma~\ref{separationofscales1} above, we obtain that the surface integrand $\varphi$ in Theorem \ref{limitsbvp} is given by
\begin{equation*}
\varphi(\omega,x_0,a,\nu)=\varphi(\omega,x_0,1,\nu)=\limsup_{\varrho\to 0}\varrho^{1-d}\lim_{\delta\to 0}\limsup_{n\to+\infty}\varphi_{\e_n,\delta}^{\w}(u_{x_0,\nu}^{1,0},v_{x_0,\nu}^{\e_n},Q_{\nu}(x_0,\varrho))
\end{equation*}
for $x_0\in D$, $a\in\R$, and $\nu\in S^{d-1}$.
\end{remark}
\begin{proof}[Proof of Lemma~\ref{separationofscales1}]
Note that it suffices to bound the left hand side from above by the right hand side. To reduce notation, we set $Q_{\varrho}:=Q_{\nu}(x_0,\varrho)$ and write $\e$ instead of $\e_n$. If $a=0$ then both sides are zero. Thus we assume that $a> 0$ (the case $a<0$ can be treated similarly). Fix $(u_{\e},v_{\e})\in\mathcal{PC}_{\e,\delta}^{\w}(u^{a,0}_{x_0,\nu},Q_{\varrho})\times \mathcal{PC}_{\e,M\e}^{\w}(v^{\e}_{x_0,\nu},Q_{\varrho})$ such that
\begin{equation}\label{eq:almostopt}
F_{\e}(\w)(u_{\e},v_{\e},Q_{\varrho})\leq C\varrho^{d-1},
\end{equation}
which exists at least for small $\e$ taking for instance $u_{\e}=u_{x_0,\nu}^{a,0}$ and $v_{\e}=v_{x_0,\nu}^{\e}$. In particular, $0\leq v_{\e}\leq 1$. In what follows we construct sequences $\tilde{u}_{\e}\in \mathcal{S}_{\e,\delta}^{\w}(u^{a,0}_{x_0,\nu},Q_{\varrho})$ and $\tilde{v}_{\e}\in\mathcal{PC}_{\e,M\e}^{\w}(v_{x_0,\nu}^{\e},Q_{\rho})$ such that
\begin{equation}\label{eq:firsttermzero}
\sum_{\substack{(x,y)\in \mathcal{E}(\w)\\ \e x,\e y\in Q_{\varrho}}}\tilde{v}_{\e}(\e x)^2|\tilde{u}_{\e}(\e x)-\tilde{u}_{\e}(\e y)|^2=0
\end{equation} 
and which have almost the same energy. We fix $\eta\in (0,1/2)$ and consider the set of points
\begin{equation*}
L_{v_{\e}}(\eta):=\{\e x\in\e\Lw\cap Q_{\varrho}:\;v_{\e}(\e x)>\eta\}.
\end{equation*} 
For $t\in\R$ we define
\begin{equation*}
L_{u_{\e}}(t):=\{\e x\in\e\Lw\cap Q_{\varrho}:\;u_{\e}(\e x)>t\}.
\end{equation*}
To reduce notation, we also introduce the set 
\begin{equation*}
\mathcal{R}_{\e}(t):=\{(x,y)\in\mathcal{E}(\w):\;\e x\in Q_{\varrho}\cap L_{u_\e}(t),\,\e y\in Q_{\varrho}\setminus L_{u_\e}(t)\text{ or vice versa}\}.
\end{equation*}
Observe that for $(x,y)\in\mathcal{E}(\w)$ with $\e x,\e y\in Q_{\varrho}$ we have $(x,y)\in \mathcal{R}_{\e}(t)$ if and only if $t\in [u_{\e}(\e x),u_{\e}(\e y))$ or $t\in [u_{\e}(\e y),u_{\e}(\e x))$. Hence for such $(x,y)$ the following coarea-type estimate holds true:
\begin{equation*}
\int_{0}^{a}|v_{\e}(\e x)|\mathds{1}_{\{(x,y)\in\mathcal{R}_{\e}(t)\}}\,\mathrm{d}t\leq|v_{\e}(\e x)||u_{\e}(\e x)-u_{\e}(\e y)|. 
\end{equation*}
Summing this estimate, we infer from H\"older's inequality that
\begin{align*}\label{coarea}
\int_{0}^{a}\sum_{(x,y)\in \mathcal{R}_{\e}(t)}\e^{d-1}|v_{\e}(\e x)|\,\mathrm{d}t&\leq \sum_{\substack{ (x,y)\in\mathcal{E}(\w)\\ \e x,\e y\in Q_{\varrho}}}\e^{d}|v_{\e}(\e x)|\Big|\frac{u_{\e}(\e x)-u_{\e}(\e y)}{\e}\Big|
\\
&\leq C\e^{\frac{d}{2}}(\#(\e\Lw\cap Q_{\varrho}))^{\frac{1}{2}}\bigg(\sum_{\substack{ (x,y)\in\mathcal{E}(\w)\\ \e x,\e y\in Q_{\varrho}}}\e^{d}v_{\e}(\e x)^2\Big|\frac{u_{\e}(\e x)-u_{\e}(\e y)}{\e}\Big|^2\bigg)^{\frac{1}{2}}.
\end{align*}
The last sum is bounded by the energy, while for $\e=\e(\varrho)$ small enough the cardinality term can be bounded via $\#(\e\Lw\cap Q_{\varrho})\leq C(\varrho\e^{-1})^d$. Hence in combination with \eqref{eq:almostopt} we obtain 
\begin{equation*}
\int_{0}^{a}\sum_{(x,y)\in \mathcal{R}_{\e}(t)}\e^{d-1}|v_{\e}(\e x)|\,\mathrm{d}t\leq  C\varrho^{d-\frac{1}{2}}.
\end{equation*}
From this inequality we deduce the existence of some $t_{\e}\in (0,a)$ such that
\begin{equation}\label{eq:goodchoice}
\sum_{(x,y)\in \mathcal{R}_{\e}(t_{\e})}\e^{d-1}|v_{\e}(\e x)|\leq Ca^{-1}\varrho^{d-\frac{1}{2}}.
\end{equation}
Define $\tilde{u}_{\e}$ and $\tilde{v}_{\e}$ by its values on $\e\Lw$ setting
\begin{align*}
\tilde{u}_{\e}(\e x):=&
\begin{cases}
0 &\mbox{if $u_{\e}(\e x)\leq t_{\e}$,}\\
a &\mbox{if $u_{\e}(\e x)>t_{\e}$.}
\end{cases}
\\
\tilde{v}_{\e}(\e x):=&
\begin{cases}
0 &\mbox{if $(x,y)\in \mathcal{R}_{\e}(t_{\e})$ for some $\e y\in\e\Lw$,} 
\\
v_{\e}(\e x) &\mbox{otherwise.}
\end{cases}
\end{align*}
As $t_{\e}\in (0,a)$, the boundary conditions imposed on $u_{\e}$ imply that the function $\tilde{u}_{\e}$ satisfies $\tilde{u}_{\e}(\e x)=u_{x_0,\nu}^{a,0}(\e x)$ for all $\e x\in\e\Lw\cap\partial_{\delta}Q_{\varrho}$, so that $\tilde{u}_{\e}\in\mathcal{S}_{\e,\delta}^{\w}(u_{x_0,\nu}^{a,0},Q_{\varrho})$ as claimed. Moreover, whenever $\dist(\e x,\R^d\setminus  Q_{\varrho})\leq M\e$, then for all $\e y\in\e\Lw$ with $(x,y)\in\mathcal{E}(\w)$ we have $\dist(\e y,\R^d\setminus Q_{\varrho})\leq 2M\e\ll\delta$. Hence the boundary conditions on $u_{\e}$ are active and $(x,y)\in \mathcal{R}_{\e}(t_{\e})$ implies that $|\langle \e x-x_0,\nu\rangle|\leq M\e$, so that $v_{\e}(\e x)=0$. Consequently $\tilde{v}_{\e}(\e x)=v_{\e}(\e x)$ and therefore $\tilde{v}_{\e}\in\mathcal{PC}_{\e,M\e}(v_{x_0,\nu}^{\e},Q_{\varrho})$. In order to verify condition \eqref{eq:firsttermzero}, observe that for any pair $(x,y)\in\mathcal{E}(\w)$ with $\e x,\e y\in Q_{\varrho}$ we have $\tilde{u}_{\e}(\e x)\neq \tilde{u}_{\e}(\e y)$ if and only if $(x,y)\in\mathcal{R}_{\e}(t_{\e})$, so that by its very definition $v_{\e}(\e x)=0$. Hence \eqref{eq:firsttermzero} holds true. Next we estimate the energy difference. Recall that $0\leq v_{\e},\tilde{v}_{\e}\leq 1$. We first estimate the energy term involving the discrete gradients of $\tilde{v}_\e$. Consider first the case when $\tilde{v}_{\e}(\e x)=0\neq v_{\e}(\e x)$ and $\tilde{v}_{\e}(\e y)=v_{\e}(\e y)$. Then  we have
	\begin{align*}
	|\tilde{v}_{\e}(\e x)-\tilde{v}_{\e}(\e y)|^2=&|v_{\e}(\e y)|^2
	\\
	\leq&
	\begin{cases}
	(1+\eta)|v_{\e}(\e x)-v_{\e}(\e y)|^2+\left(1+\frac{1}{\eta}\right)\eta^2 &\mbox{if $\e x\notin L_{v_{\e}}(\eta)$,}
	\\
	1 &\mbox{if $\exists \,\e x^{\prime}\in \e\Lw:\,(x,x^{\prime})\in\mathcal{R}_{\e}(t_{\e})$.}
	\end{cases}
	\end{align*}
	The symmetric conclusion holds true when we exchange the roles of $x$ and $y$. In all remaining cases we have $|\tilde{v}_{\e}(\e x)-\tilde{v}_{\e}(\e y)|\leq |v_{\e}(\e x)-v_{\e}(\e y)|$. Hence we obtain the global bound
	\begin{align}\label{eq:gradv}
	\sum_{\substack{ (x,y)\in\mathcal{E}(\w)\\ \e x,\e y\in Q_{\varrho}}}\e^{d-1}|\tilde{v}_{\e}(\e x)-\tilde{v}_{\e}(\e y)|^2\leq& (1+\eta)\sum_{\substack{ (x,y)\in\mathcal{E}(\w)\\ \e x,\e y\in Q_{\varrho}}}\e^{d-1}|v_{\e}(\e x)-v_{\e}(\e y)|^2\nonumber
	\\
	&+C\eta \e^{d-1}\# \left(\e\Lw\cap Q_{\varrho}\setminus L_{v_{\e}}(\eta)\right)\nonumber
	\\
	&+C\e^{d-1} \#\{(x,y)\in\mathcal{R}_{\e}(t_{\e}):\,\e x\in L_{v_{\e}}(\eta)\}.
	\end{align}
	Next we bound the 'singe-well'-term. Since the function $x\mapsto (x-1)^2$ is $2$-Lipschitz on $[0,1]$, we obtain
	\begin{equation*}
	(\tilde{v}_{\e}(\e x)-1)^2\leq 
	\begin{cases}
	1 &\mbox{if $\exists\,\e y\in\e\Lw: \,(x,y)\in\mathcal{R}_{\e}(t_{\e})$,}
	\\
	(v_{\e}(\e x)-1)^2+2\eta &\mbox{if $\e x\in\e\Lw\cap Q_{\varrho}\setminus L_{v_{\e}}(\eta)$,}
	\\
	(v_{\e}(\e x)-1)^2 &\mbox{otherwise.}	
	\end{cases}
	\end{equation*}
	Summing this estimate over all $\e x\in\e\Lw\cap Q_{\varrho}$ and adding the result to \eqref{eq:gradv}, we infer from \eqref{eq:almostopt} that
	\begin{align}\label{eq:errorseparated}
	F_{\e}(\w)(\tilde{u}_{\e},\tilde{v}_{\e},Q_{\varrho})\leq& F_{\e}(\w)(u_{\e},v_{\e},Q_{\varrho})+C\eta\varrho^{d-1}+C\eta\e^{d-1}\#\left(\e\Lw\cap Q_{\varrho}\setminus L_{v_{\e}}(\eta)\right)\nonumber
	\\
	&+C\e^{d-1} \#\{(x,y)\in\mathcal{R}_{\e}(t_{\e}):\,\e x\in L_{v_{\e}}(\eta)\}
	\end{align}
	We claim that the last three terms can be made small relatively to $\varrho^{d-1}$ by choosing the order of limits as in the statement. On the one hand, note that since $\eta\in(0,1/2)$ we have by  \eqref{eq:almostopt}
	\begin{equation}\label{eq:sublevelbound}
	\e^{d-1}\#\left(\e\Lw\cap Q_{\varrho}\setminus L_{v_{\e}}(\eta)\right)\leq C\sum_{\e x\in\e\Lw\cap Q_{\varrho}}\e^{d-1}(v_{\e}(\e x)-1)^2\leq C\varrho^{d-1}.
	\end{equation}
	On the other hand, since $v_{\e}\geq 0$, from \eqref{eq:goodchoice} we deduce
	\begin{equation}\label{eq:R_ebound}
	\e^{d-1} \#\{(x,y)\in\mathcal{R}_{\e}(t_{\e}):\,\e x\in L_{v_{\e}}(\eta)\}\leq \frac{1}{\eta}\sum_{(x,y)\in\mathcal{R}_{\e}(t_{\e})}\e^{d-1}|v_{\e}(\e x)|\leq C\frac{1}{\eta a}\varrho^{d-\frac{1}{2}}.
	\end{equation}
	Inserting \eqref{eq:sublevelbound} and \eqref{eq:R_ebound} in \eqref{eq:errorseparated} we obtain the estimate
	\begin{equation*}
	\varrho^{1-d}F_{\e}(\w)(\tilde{u}_{\e},\tilde{v}_{\e},Q_{\varrho})\leq \varrho^{1-d}F_{\e}(\w)(u_{\e},v_{\e},Q_{\varrho})+C\eta+C\frac{1}{\eta a}\varrho^{\frac{1}{2}}.
	\end{equation*}
	Taking the appropriate infimum on each side, then letting first $\e\to 0$, then $\delta\to 0$ and $\varrho\to 0$, we conclude by the arbitrariness of $\eta>0$.
\end{proof} 
Gathering Proposition \ref{limitsbvp}, Proposition \ref{p.gradientpartsequal}, Lemma \ref{approxminprob} and Lemma \ref{separationofscales1} we can now prove Theorem \ref{mainrep}.
\begin{proof}[Proof of Theorem \ref{mainrep}]
Let $\Lw$ be an admissible lattice with admissible edges and let $\e_n$ and $F(\w)$ be the subsequence and the functional provided by Proposition \ref{limitsbvp}. Thanks to Proposition \ref{p.gradientpartsequal} we know that along the subsequence $\e_n$ also the functionals $F_{\e_n}^b(\w)(\cdot,1,A)$ $\Gamma$-converge to $F^b(\w)(\cdot,A)$ for every $A\in\Areg(D)$ with $F^b(\w)$ given by Theorem \ref{ACGmain}. Combining Propositions \ref{limitsbvp} and \ref{p.gradientpartsequal} we then deduce that for every $A\in\Areg(D)$ and every $u\in SBV^2(A)$ we have
\begin{align*}
F(\w)(u,1,A)=\int_A f(\w,x,\nabla u)\dx+\int_{S_u\cap A}\varphi(\w,x,u^+-u^-,\nu_u)\dHd,
\end{align*}
where $f(\w,\cdot,\cdot)$ is given by Theorem \ref{ACGmain} and $\varphi(\w,\cdot,\cdot,\cdot)$ is determined by the derivation formula \eqref{derivationformula}. Moreover, Lemma \ref{approxminprob} together with Lemma \ref{separationofscales1} ensure that the surface integrand $\varphi$ does not depend on the jump opening $u^+-u^-$ (see also Remark \ref{r.jumpindependence}). In fact, for every $A\in\Areg(D)$ and every $u\in SBV^2(A)$ we obtain
\begin{align}\label{gammalimsbv}
F(\w)(u,1,A)=\int_A f(\w,x,\nabla u)\dx+\int_{S_u\cap A}\varphi(\w,x,\nu_u)\dHd,
\end{align}
where $\varphi(\w,\cdot,\cdot):D\times S^{d-1}\to[0,+\infty)$ is given by the asymptotic formula \eqref{eq:formula_phi}. Finally, using a standard truncation argument (see, \eg the proof of \cite[Theorem 3.3]{R18} for more details), thanks to Remark \ref{trunc} we deduce that formula \eqref{gammalimsbv} extends to the whole $GSBV^2(A)$.
\end{proof}
\subsection{Optimality of the lattice-space scaling}
We close this section by proving Theorem \ref{t.representationl} and the optimality of the lattice-space scaling.
\begin{proof}[Proof of Theorem \ref{t.representationl}]
Let $\Lw$ be an admissible lattice with admissible edges $\mathcal{E}(\w)$ and for every $\e>0$ let $F_{\e,\kappa_\e}(\w)$ be as in \eqref{def:Fdelta} with $\kappa_\e=\ell\e$ for some $\ell\in(0,+\infty)$. It is convenient to rewrite the energy as
\begin{align*}
F_{\e,\kappa_\e}(\w)(u,v)=F_{\kappa_\e}^b(\w)(u,v)+F_{\e,\ell}^s(\w)(v),
\end{align*}
where
\begin{align*}
F_{\e,\ell}^s(\w)(v):=\frac{\beta}{2}\Big(\ell\sum_{\kappa_\e x\in\kappa_\e\Lw\cap D}\hspace*{-1em}\kappa_\e^{d-1}(v(\kappa_\e x))^2+\frac{1}{\ell}\sum_{\substack{(x,y)\in\mathcal{E}(\w)\\\kappa_\e x,\kappa_\e y\in D}}\hspace*{-0.5em}\kappa_\e^{d-1}\left|v(\kappa_\e x)-v(\kappa_\e y)\right|^2\Big).
\end{align*}
It is then easy to see that Lemmata \ref{local}--\ref{almostmeasure} are satisfied also for the functionals $F_{\e,\kappa_\e}$ with the constant $c$ in Lemma \ref{compact} and Lemma \ref{bounds} depending on $\ell$. As a consequence, Proposition \ref{limitsbvp} holds for $F_{\e,\kappa_\e}$ and yields a limit functional $F_{\ell}(\w)$. Moreover, Proposition \ref{p.gradientpartsequal} remains unchanged if $F_{\e}$ is replaced by $F_{\e,\kappa_\e}$. Finally, Lemma \ref{approxminprob} and Lemma \ref{separationofscales1} are still valid for $m_{\ell}^{\w},m_{\e,\ell,\delta}^\w$ and $\varphi_{\e,\ell,\delta}^\w$, where for every $\delta>0$,$m_{\ell}^{\w}$ and $m_{\e,\ell,\delta}^\w$ are as in \eqref{def:med} with $F_{\ell}(\w)$ instead of $F(\w)$ and $F_{\e,\kappa_\e}$ instead of $F_{\e}$, and $\varphi_{\e,\ell,\delta}^\w$ is as in \eqref{def:surf:int} with $F_\e^b(\w)$ and $F_{\e,s}(\w)$ replaced by  $F_{\kappa_\e}^b(\w)$ and $F_{\e,\ell}^s(\w)$, respectively. Moreover, $\mathcal{S}_{\e,\delta}^\w$ and $\mathcal{PC}_{\e,\delta}^\w$  are replaced by $\mathcal{S}_{\kappa_\e,\delta}^\w$ and $\mathcal{PC}_{\kappa_\e,\delta}^\w$. Thus, arguing as in the proof of Theorem \ref{mainrep} we obtain the required integral representation of $F_{\ell}(\w)$ on $GSBV^2(D)$, where now the surface integrand $\varphi_\ell(\w,\cdot,\cdot)$ can be equivalently characterized by the formulas
\begin{align*}
\varphi_\ell(\w,x_0,\nu) &=\limsup_{\varrho\to 0}\varrho^{1-d}\lim_{\delta\to 0}\limsup_{n\to+\infty}m_{\e_n,\ell,\delta}^w(u_{x_0,\nu}^{1,0},v_{x_0,\nu}^{\kappa_{\e_n}}, Q_\nu(x_0,\varrho))\\
&=\limsup_{\varrho\to 0}\varrho^{1-d}\lim_{\delta\to 0}\limsup_{n\to+\infty}\varphi_{\e_n,\ell,\delta}^\w(u_{x_0,\nu}^{1,0},Q_\nu(x_0,\varrho)).
\end{align*}
Notice that thanks to the separation of scales only the surface integrand $\varphi_\ell(\w,\cdot,\cdot)$ may depend on the ratio $\ell$, while the volume integrand $f(\w,\cdot,\cdot)$ is independent of $\ell$.

In order to verify the estimate in \eqref{est:phil1} we use again the connection to weak-membrane energies. To this end let $\e_n$ be a subsequence such that $F_{\e_n,\kappa_{\e_n}}(\w)$ $\Gamma$-converges to $F_\ell(\w)$ and set $\kappa_n:=\kappa_{\e_n}=\ell\e_n$. Upon passing to a further subsequence we can assume that also $G_{\kappa_n,\alpha}(\w)$ $\Gamma$-converges for every $\alpha>0$. Let $(x_0,\nu)\in\R^d\times S^{d-1}$ and for $\delta>0$, $\varrho>0$ arbitrary let $(u,v)$ be admissible for $m_{\e_n,\ell,\delta}^\w(u_{x_0,\nu}^{1,0},v_{x_0,\nu}^{\kappa_n},Q_\nu(x_0,\varrho))$. Clearly, $u$ is admissible for the minimization problem
\begin{align*}
\inf\{G_{\kappa_n,\beta\ell}(\w)(u,Q_{\nu}(x_0,\varrho)):\,u\in\mathcal{S}_{\kappa_n,\delta}^{\w}(u_{x_0,\nu}^{1,0},Q_{\nu}(x_0,\varrho))\}.
\end{align*}
Moreover, due to Proposition \ref{prop:ATandWeak} we have
\begin{align*}
F_{\e_n,\kappa_n}(\w)(u,v,Q_\nu(x_0,\varrho))\geq G_{\kappa_n,\beta\ell}(u,Q_\nu(x_0,\varrho)).
\end{align*}
Hence, passing to the infimum and taking the appropriate limits in $n$, $\delta$ and $\varrho$, thanks to Theorem \ref{t.weakmembrane} we deduce that
\begin{align*}
\varphi_\ell(\w,x_0,\nu)\geq s_{\beta\ell}(\w,x_0,\nu)=\beta\ell s_1(\w,x_0,\nu).
\end{align*}
We continue proving the upper estimate in \eqref{est:phil1}. For $\delta>0$, $\varrho>0$ fixed we choose $w:\kappa_n\Lw\to\{\pm 1\}$ admissible for the minimization problem 
\begin{align*}
\inf\{I_{\kappa_n,\beta(\ell+\tfrac{M}{\ell})}(\w)(w,Q_{\nu}(x_0,\varrho)):\,w\in\mathcal{S}_{\kappa_n,\delta}^{\w}(u_{x_0,\nu}^{1,-1},Q_{\nu}(x_0,\varrho))\},
\end{align*}
and we observe that the $u$-component of the pair $(u,v)\in\mathcal{PC}_{\kappa_n}^\w\times\mathcal{PC}_{\kappa_n}^\w$ defined as
\begin{align*}
u(\kappa_n x) &:=
\begin{cases}
1 &\text{if $w(\kappa_n x)=1$,}\\
0 &\text{if $w(\kappa_n x)=-1$,}
\end{cases}
\\
v(\kappa_n x) &:=
\begin{cases}
0 &\text{if $\max\{|w(\kappa_n x)-w(\kappa_n y)|\colon \kappa_n y\in\kappa_n\mathcal{E}(\w)(x)\cap Q_\nu(x_0,\varrho)\}=2$,}\\
1 &\text{otherwise}
\end{cases}
\end{align*}
belongs to $\mathcal{S}_{\kappa_n,\delta}^{\w}(u_{x_0,\nu}^{1,0},Q_{\nu}(x_0,\varrho))$. Moreover, arguing as in the proof of Lemma \ref{bounds} we obtain
\begin{align*}
F_{\kappa_n,\ell}^s(\w)(v,Q_\nu(x_0,\varrho)) &\leq\frac{\beta}{2}\left(\ell+\frac{M}{\ell}\right)\sum_{\kappa_n x\in\kappa_n\Lw\cap Q_\nu(x_0,\varrho)}\hspace*{-2em}\kappa_n^{d-1}(v(\kappa_n x)-1)^2
=I_{\kappa_n,\beta(\ell+\tfrac{M}{\ell})}(\w)(w,Q_\nu(x_0,\varrho)).
\end{align*}
However, in general $v$ is not admissible for $\varphi_{\e_n,\ell,\delta}^\w(u_{x_0,\nu}^{1,0},Q_\nu(x_0,\varrho))$ due to the boundary conditions. Nevertheless, $F_{\kappa_n}^b(\w)(u,v)=0$, hence using only the boundary conditions of $u$ we can argue as in the first part of the proof of Lemma \ref{approxminprob} to show that
\begin{equation*}
m_{\ell}^{\w}(u_{x_0,\nu}^{1,0},Q_{\nu}(x_0,\varrho))\leq \liminf_{n\to +\infty}F_{\kappa_n,\ell}^s(\w)(v,Q_\nu(x_0,\varrho)).
\end{equation*}
Since $w$ was arbitrarily chosen, passing to the infimum and taking again the appropriate limits in $n$, $\delta$, $\varrho$ finally yields
\begin{align*}
\varphi_\ell(\w,x_0,\nu)\leq s_{\beta(\ell+\tfrac{M}{\ell})}(w,x_0,\nu)=\beta\left(\ell+\frac{M}{\ell}\right)s_1(\w,x_0,\nu).
\end{align*}
\end{proof}
\noindent Eventually we prove Corollary \ref{c.optimal}.
\begin{proof}[Proof of Corollary \ref{c.optimal}]
Let $\Lw$ be an admissible lattice with admissible edges $\mathcal{E}(\w)$ and suppose now that $\kappa_\e$ is such that $\kappa_\e/\e\to+\infty$ as $\e\to 0$. Note that by Proposition \ref{prop:ATandWeak}, for every $\ell>0$ there exists $\e_\ell>0$ such that for every $\e\in (0,\e_\ell)$ we have
\begin{equation*}
C\geq  F_{\kappa_{\e}}^b(\w)(u_{\e},v_{\e})+\frac{\ell}{2}\sum_{\kappa_{\e}x\in\kappa_{\e}\Lw\cap D}\kappa_{\e}^{d-1}(v_{\e}(\kappa_{\e}x)-1)^2 \geq G_{\kappa_\e,\ell}(\w)(u_\e).
\end{equation*}
Since $u_{\e}$ is bounded in $L^{2}(D)$, the compactness result for weak-membrane energies (cf. \cite[Lemma 5.6]{R18}) yields that up to a subsequence, $u_{\e}\to u$ in $L^1(D)$ for some $u\in GSBV^2(D)\cap L^2(D)$. It remains to show that $u\in W^{1,2}(D)$. To do so, we prove that the sequence $(T_ku)$ is bounded in $W^{1,2}(D)$ uniformly with respect to $k$, then we may conclude by letting $k\to+\infty$. Thanks to Theorem \ref{t.weakmembrane}, up to passing to a further subsequence (not relabeled), we can assume that $G_{\kappa_\e,\ell}(\w)$ $\Gamma$-converges to $G_{\ell}(\w)$. Thus, the growth conditions for the integrands in Theorem \ref{t.weakmembrane} imply that
\begin{equation*}
C\geq \int_D q(\w,x,\nabla u)\,\mathrm{d}x+\int_{S_{u}} s_{\ell}(\w,x,\nu_{u})\,\mathrm{d}\mathcal{H}^{d-1}\geq\frac{1}{C}\int_D|\nabla u|^2\dx+\frac{\ell}{C}\Hd(S_{u}),
\end{equation*}
for every $\ell>0$, so that $\mathcal{H}^{d-1}(S_{u})=0$. In particular, for every $k>0$ we have $\Hd(S_{T_ku})=0$ and $\sup_{k}\|\nabla T_ku\|_{L^2}\leq\|\nabla u\|_{L^2}\leq C$. Since $T_ku\in SBV(D)\cap L^{\infty}(D)$ and $u\in L^2(D)$ this implies that $(T_ku)$ is bounded in $W^{1,2}(D)$ uniformly with respect to $k$ and we conclude.
\end{proof}
\section{Stochastic homogenization: Proof of Theorems \ref{mainthm1} and \ref{MSapprox}}\label{s.stochhom}
In this section we prove Theorem \ref{mainthm1}. In particular we establish the existence of the limit defining $\varphi_{\rm hom}$ in \eqref{ex:hom:surf}. Similar arguments have already been used by the second and third author in \cite[Theorem 5.5]{ACR}, \cite[Theorem 5.8]{BCR} (see also \cite[Sections 5 and 6]{CDMSZ17a}). The main step consists in defining a suitable subadditive stochastic process (see Definition \ref{d.subadprocess} below), which then allows us to apply the subadditive ergodic theorem which we recall in Theorem \ref{t.subadergodic} below. To this end, we first need to introduce some notation. 

For every $a,b\in\Z^{d-1}$ with $a_i<b_i$ for $i=1,\ldots,d-1$ we define the $(d-1)$-dimensional interval $[a,b):=\{x\in\R^{d-1}\colon a_i\leq x_i <b_i\text{ for }i=1\ldots,d-1\}$ and we set $\mathcal{I}:=\{[a,b)\colon a,b\in\Z^{d-1},\, a_i<b_i\text{ for }i=1,\ldots,d-1\}$.
\begin{definition}\label{d.subadprocess}
A discrete subadditive stochastic process is a function $\mu:\mathcal{I}\to L^1(\Omega)$ satisfying the following properties:
\begin{enumerate}[label=(\roman*)]
\item (subadditivity) for every $I\in\mathcal{I}$ and every finite partition $(I_k)_{k\in K}\subset\mathcal{I}$ of $I$ a.s. we have \begin{equation*}
\mu(I,\w)\leq\sum_{k\in K}\mu(I_k,\w);
\end{equation*}
\item (boundedness from below) there holds 
$$
\inf\left\{\frac{1}{|I|}\int_\Omega \mu(I,\w)\dP(\w):\,I\in\mathcal{I}\right\}>-\infty.
$$
\end{enumerate}
\end{definition}
We make use of the following pointwise subadditive ergodic theorem (see \cite[Theorem 2.4]{AkKr}).
\begin{theorem}\label{t.subadergodic}
Let $\mu:\mathcal{I}\to L^1(\Omega)$ be a discrete subadditive stochastic process and let $I_k:=[-k,k)^{d-1}$. Suppose that there exists a measure preserving group action $\{\tau_z\}_{z\in\Z^{d-1}}$ such that $\mu$ is stationary with respect to $\{\tau_z\}_{z\in\Z^{d-1}}$,\ie
\begin{equation*}
\forall\, I\in\mathcal{I},\ \forall\, z\in\Z^{d-1}:\ \mu(I+z,\w)=\mu(I,\tau_z\w)\ \text{a.s.}
\end{equation*}
Then there exists a function $\Phi:\Omega\to\R$ such that, for $\mathbb{P}$-a.e. $\w$,
\begin{equation*}
\lim_{k\to+\infty}\frac{\mu(I_k,\w)}{\Hd(I_k)}=\Phi(\w).
\end{equation*}
\end{theorem}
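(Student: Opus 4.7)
The plan is to follow the classical proof of the Akcoglu--Krengel multidimensional subadditive ergodic theorem, of which this statement is a special case. The strategy has three stages: use Fekete's lemma to produce the asymptotic mean $\overline{\mu}:=\lim_k\mathbb{E}[\mu(I_k,\cdot)]/|I_k|$, show that the $\liminf$ and $\limsup$ of $\mu(I_k,\omega)/|I_k|$ are $\{\tau_z\}$-invariant random variables, and then close the gap between them via a paving argument combined with the multidimensional Birkhoff ergodic theorem and a maximal inequality.

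First I would set $\Phi^-(\omega):=\liminf_k\mu(I_k,\omega)/|I_k|$ and $\Phi^+(\omega):=\limsup_k\mu(I_k,\omega)/|I_k|$; both are bounded below by a deterministic constant thanks to (ii). Partitioning $I_{2k}$ into $2^{d-1}$ disjoint translates of $I_k$ and applying subadditivity and stationarity, one obtains a subadditive relation for $\mathbb{E}[\mu(I_{2k},\cdot)]$ which, together with (ii), yields $\overline{\mu}\in\R$ by Fekete's lemma. To prove $\tau_z$-invariance of $\Phi^\pm$, for each $z\in\Z^{d-1}$ I compare $\mu(I_k+z,\omega)$ with $\mu(I_{k+|z|_\infty},\omega)$ by partitioning the larger cube into $I_k+z$ and a frame of volume $O(k^{d-2})$, and vice versa. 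Subadditivity bounds the difference by frame contributions which vanish in $L^1$ relative to $|I_k|$, so using stationarity $\mu(I_k+z,\omega)=\mu(I_k,\tau_z\omega)$ one deduces $\Phi^\pm\circ\tau_z=\Phi^\pm$ almost surely.

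The main obstacle is the equality $\Phi^-=\Phi^+$ almost surely. I would fix $k\in\N$ and pave a large cube $I_K$ by disjoint translates $I_k+2kz$ with $z\in Z_K^k\subset\Z^{d-1}$, leaving a border $R_K^k$ of measure $O(K^{d-2}k)$. Subadditivity and stationarity give
\begin{equation*}
\frac{\mu(I_K,\omega)}{|I_K|}\le\frac{|I_k|}{|I_K|}\sum_{z\in Z_K^k}\frac{\mu(I_k,\tau_{2kz}\omega)}{|I_k|}+\frac{\mu(R_K^k,\omega)}{|I_K|}.
\end{equation*}
The multidimensional Birkhoff ergodic theorem applied to the $\Z^{d-1}$-action $z\mapsto\tau_{2kz}$ implies that the first sum converges $\mathbb{P}$-a.s., as $K\to\infty$, to $\mathbb{E}[\mu(I_k,\cdot)/|I_k|\mid\mathcal{J}_k]$, where $\mathcal{J}_k$ is the invariant $\sigma$-algebra of this subgroup action, while the border term vanishes in $L^1$ because $\mathbb{E}[\mu(R_K^k,\cdot)]=O(K^{d-2}k)$. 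Letting $K\to\infty$ and then $k\to\infty$ yields $\Phi^+\le\overline{\mu}$ almost surely in the ergodic case (and a conditional version in general). Reversing this inequality is the genuine difficulty: one applies a maximal ergodic inequality to the subadditive process $\mu$ itself, following the original argument of Akcoglu--Krengel, to conclude $\mathbb{E}[\Phi^-]\ge\overline{\mu}$; combined with Fatou's lemma (which yields $\mathbb{E}[\Phi^-]\le\overline{\mu}$), this forces $\mathbb{E}[\Phi^+]=\mathbb{E}[\Phi^-]=\overline{\mu}$, and the pointwise inequality $\Phi^-\le\Phi^+$ then gives almost sure equality. Setting $\Phi:=\Phi^-=\Phi^+$ produces the desired limit.
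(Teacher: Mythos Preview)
The paper does not prove this theorem; it is quoted from Akcoglu--Krengel \cite{AkKr} as an external tool (see the sentence immediately preceding the statement). Your sketch follows the broad outline of the classical argument, but there are genuine gaps worth flagging.

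First, hypothesis (ii) in Definition \ref{d.subadprocess} bounds only the \emph{expectations} $\tfrac{1}{|I|}\mathbb{E}[\mu(I,\cdot)]$ from below, not the pointwise values $\mu(I,\omega)/|I|$. Hence your assertion that $\Phi^{\pm}$ are bounded below by a deterministic constant is not justified as stated, and Fatou's lemma cannot be invoked without additional care. Second, in several places (shift-invariance of $\Phi^{\pm}$, vanishing of the border term in the paving inequality) you pass from $L^1$ smallness of frame contributions to almost-sure smallness; this step requires either a Borel--Cantelli argument along a subsequence or, more naturally, bounding the frame by an ergodic average of $\mu$ over unit intervals and then applying Birkhoff's theorem to that average. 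Third, and most importantly, the maximal ergodic inequality you invoke for the lower bound $\mathbb{E}[\Phi^-]\ge\overline{\mu}$ is the technical heart of the Akcoglu--Krengel proof and is substantially harder than everything else you wrote; saying ``one applies a maximal ergodic inequality to the subadditive process $\mu$ itself'' hides essentially all of the work. In the original paper this inequality is obtained via a delicate multiparameter stopping-time construction that does not reduce to the additive Birkhoff case.

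So your outline is the correct high-level strategy, and it is indeed the route taken in \cite{AkKr}, but as written it is not a proof: the maximal-inequality step must be either carried out in full or explicitly reduced to a cited lemma, and the $L^1$-to-a.s.\ passages need tightening.
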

As a first step towards the proof of Theorem \ref{mainthm1} we prove the following proposition.
\begin{proposition}\label{prop:ex:limit0}
Let $\mathcal{L}$ be an admissible stochastic lattice that is stationary with respect to a measure-preserving additive group action $\{\tau_z\}_{z\in\Z^d}$ with admissible, stationary edges in the sense of Definitions \ref{defadmissible} \& \ref{defgoodedges}. Then there exist $\widetilde{\Omega}\subset\Omega$ with $\mathbb{P}(\widetilde{\Omega})=1$ and a function $\varphi_{\rm hom}:\Omega\times S^{d-1}\to[0,+\infty)$ satisfying
\begin{align*}
\varphi_{\rm hom}(\w,\nu) &=\lim_{t\to+\infty}t^{1-d}\varphi_{1,M}^\w(u_{0,\nu}^{1,0},Q_{\nu}(0,t)),
\end{align*}
for every $\w\in\widetilde{\Omega}$ and every $\nu\in S^{d-1}$. Moreover, we have $\tau_z(\widetilde{\Omega})=\widetilde{\Omega}$ for every $z\in\Z^d$ and
\begin{align}\label{eq:shiftinv}
\varphi_{\rm hom}(\tau_z\w,\nu)=\varphi_{\rm hom}(\w,\nu)
\end{align}
for every $z\in\Z^d$, $\w\in\widetilde{\Omega}$, and $\nu\in S^{d-1}$.
\end{proposition}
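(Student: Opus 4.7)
The plan is to apply the subadditive ergodic theorem (Theorem \ref{t.subadergodic}) to a family of $(d-1)$-dimensional stochastic processes, one for each direction $\nu\in S^{d-1}$, in the spirit of \cite{ACR,BCR,CDMSZ17a}. For a fixed $\nu$, after choosing an orthonormal basis $\nu_1=\nu,\nu_2,\ldots,\nu_d$ of $\R^d$, I associate to every $(d-1)$-dimensional integer interval $I\in\mathcal{I}$ the prism
\begin{equation*}
T_I^\nu:=\Big\{x\in\R^d:\,\big(\langle x,\nu_2\rangle,\ldots,\langle x,\nu_d\rangle\big)\in I,\ |\langle x,\nu\rangle|<c_0\Big\}
\end{equation*}
with a fixed height $c_0>2M$. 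I then set
\begin{equation*}
\mu_\nu(I,\w):=\inf\Big\{F_1^s(\w)(v,T_I^\nu):\,v\in\mathcal{PC}_{1,M}^\w(v_{0,\nu}^1,T_I^\nu),\ \exists\, u\in\mathcal{S}_{1,M}^\w(u_{0,\nu}^{1,0},T_I^\nu),\ F_1^b(\w)(u,v,T_I^\nu)=0\Big\}.
\end{equation*}
Boundedness from below in Definition \ref{d.subadprocess} is immediate since $F_1^s\geq 0$. For subadditivity, given a finite partition $I=\bigcup_k I_k$ and near-optimal pairs $(u_k,v_k)$ for each $\mu_\nu(I_k,\w)$, I glue them by setting $(u,v)=(u_k,v_k)$ on the nodes of $\Lw$ in $T_{I_k}^\nu$. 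Since both $u_k=u_{0,\nu}^{1,0}$ and $v_k=v_{0,\nu}^1$ in the $M$-neighborhood of each $\partial T_{I_k}^\nu$, the glued pair is well-defined, admissible for $T_I^\nu$, and the energy is additive across interfaces since all interactions there involve only the shared boundary data; in particular the constraint $F_1^b=0$ is preserved.

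Next, I equip $\Omega$ with a measure-preserving $\Z^{d-1}$-action $\{\tau'_z\}$ making $\mu_\nu$ stationary. For $\nu\in S^{d-1}\cap\Q^d$ a basis of the lattice $\Z^d\cap H_\nu$, suitably rescaled so as to fit the family $\mathcal{I}$, yields such an action as a subgroup of $\{\tau_z\}_{z\in\Z^d}$; the general case is then treated by the approximation argument of \cite[Theorem 5.5]{ACR}, \cite[Theorem 5.8]{BCR}. Theorem \ref{t.subadergodic} yields a set $\Omega_\nu$ of full probability and a function $\Phi_\nu:\Omega_\nu\to[0,+\infty)$ with
\begin{equation*}
\Phi_\nu(\w)=\lim_{k\to+\infty}\frac{\mu_\nu([-k,k)^{d-1},\w)}{(2k)^{d-1}}.
\end{equation*}
To produce a single full-measure set $\widetilde{\Omega}$ good for all $\nu$, I fix a countable dense $D\subset S^{d-1}$, set $\widetilde{\Omega}:=\bigcap_{z\in\Z^d}\tau_z\bigcap_{\nu\in D}\Omega_\nu$ (still of full measure by countability of $D$ and measure-preservation of $\tau_z$), and extend $\varphi_{\rm hom}(\w,\cdot)$ from $D$ to $S^{d-1}$ by a continuity argument, using uniform bounds on $\varphi_{\rm hom}$ obtained from the upper and lower estimates in Theorem \ref{t.weakmembrane} via the same weak-membrane comparison as in the proof of Theorem \ref{t.representationl}.

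To match the prism-based limit with the cube-based one in the statement, observe that since $v_{0,\nu}^1\equiv 1$ outside an $M$-tubular neighborhood of $H_\nu$ and every competitor $v$ must agree with it there, the surface energy on $Q_\nu(0,t)$ is concentrated in a strip of thickness $O(1)$ around $H_\nu$. Comparing the minimum over $Q_\nu(0,t)$ with that over $T_{[-t/2,t/2)^{d-1}}^\nu$ therefore introduces only an $O(t^{d-2})$ discrepancy coming from the boundary layer near $\partial Q_\nu(0,t)\cap H_\nu$. Dividing by $t^{d-1}$ and taking $t\to+\infty$ this discrepancy vanishes, so the two limits coincide, and the convergence along integer $t=k$ extends to $t\in(0,+\infty)$ by straightforward monotonicity and comparison. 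The invariance \eqref{eq:shiftinv} follows from the stationarity of $(\mathcal{L},\mathcal{E})$: for $z\in\Z^d$, a change of variables gives $\varphi_{1,M}^{\tau_z\w}(u_{0,\nu}^{1,0},Q_\nu(0,t))=\varphi_{1,M}^\w(u_{z,\nu}^{1,0},Q_\nu(z,t))$, which differs from $\varphi_{1,M}^\w(u_{0,\nu}^{1,0},Q_\nu(0,t))$ only by a boundary-layer contribution of order $t^{d-2}|z|$, negligible after normalization by $t^{d-1}$.

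The main obstacle is handling the non-convex constraint in the definition of $\mu_\nu$ (the existence of some $u\in\mathcal{S}_{1,M}^\w$ with $F_1^b(\w)(u,v,\cdot)=0$) when gluing competitors for the subadditivity estimate. This works only because both the boundary datum $u_{0,\nu}^{1,0}$ and every admissible $u$ take values in $\{0,1\}$ and are completely determined by $u_{0,\nu}^{1,0}$ in the $M$-neighborhood of each $\partial T_{I_k}^\nu$: for pairs of neighbors straddling an internal interface, either both belong to the $M$-strip around $H_\nu$ (where $v=v_{0,\nu}^1=0$ so the bulk contribution vanishes) or they lie on the same side of $H_\nu$ and hence carry the same value of $u_{0,\nu}^{1,0}$. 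This rigidity of the boundary data is what makes the process well-defined, and a related subtlety arises in the passage from rational to arbitrary directions $\nu$, which requires the uniform bounds on $\varphi_{\rm hom}(\w,\cdot)$ mentioned above.
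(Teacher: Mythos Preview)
Your overall strategy matches the paper's, but the subadditivity step contains a genuine error. You claim that after gluing near-optimal pairs $(u_k,v_k)$ across the partition, ``the energy is additive across interfaces since all interactions there involve only the shared boundary data.'' This is false for $F_1^s$. For an edge $(x,y)\in\mathcal{E}(\w)$ with $x\in T_{I_k}^\nu$ and $y\in T_{I_j}^\nu$ ($k\neq j$), both endpoints lie in the $M$-neighbourhood of the common face, so indeed $v(x)=v_{0,\nu}^1(x)$ and $v(y)=v_{0,\nu}^1(y)$; but $v_{0,\nu}^1$ is \emph{not} constant, and $|v_{0,\nu}^1(x)-v_{0,\nu}^1(y)|^2=1$ whenever exactly one of $x,y$ lies in the strip $\{|\langle\,\cdot\,,\nu\rangle|\leq M\}$. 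These cross-terms contribute to $F_1^s(\w)(v,T_I^\nu)$ but not to $\sum_k F_1^s(\w)(v_k,T_{I_k}^\nu)$, and they are of order $\sum_{k\neq j}\mathcal{H}^{d-2}(\overline{I_k}\cap\overline{I_j})$, so your process $\mu_\nu$ is \emph{not} subadditive as defined. The paper repairs this by working instead with $\mu(I,\w)+C_\mu\mathcal{H}^{d-2}(\partial I)$ for a sufficiently large constant $C_\mu$; the boundary correction absorbs the cross-terms and is of lower order, so it disappears in the limit. (Your argument that the constraint $F_1^b=0$ is preserved under gluing is correct, however.)

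There is a second, structural gap. You use prisms $T_I^\nu$ of \emph{fixed} height $c_0$, whereas the paper's sets $I_d=mO_\nu({\rm int}\,I\times(-s_{\max},s_{\max}))$ have height scaling with the interval, so that for square $I=[-k,k)^{d-1}$ one obtains genuine cubes $Q_\nu(0,2mk)$ and no separate comparison is needed. Your claim that the prism and cube minima differ only by $O(t^{d-2})$ relies on the assertion that the optimal $v$ for $Q_\nu(0,t)$ has its surface energy concentrated in a strip of thickness $O(1)$ around $H_\nu$. This is plausible but not automatic: the boundary condition only forces $v=v_{0,\nu}^1$ on $\partial_M Q_\nu(0,t)$, not in the bulk, and a cube competitor restricted to your prism need not satisfy the prism's boundary conditions on the top and bottom faces $\{|\langle x,\nu\rangle|=c_0\}$. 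You therefore get only the inequality $\text{cube-min}\leq\text{prism-min}$ for free; the reverse requires an additional modification argument that you have not supplied.
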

In order to prove Proposition \ref{prop:ex:limit0} above we will use several times the following lemma.
\begin{lemma}\label{l.extension}
Let $z,z'\in \R^d$, $t,t'>0$ and $\nu\in S^{d-1}$ be such that the cubes $Q_\nu(z,t)$ and $Q_\nu(z',t')$ satisfy the following conditions
\begin{equation*}
{\rm (i)}\ Q_\nu(z,t)\subset Q_\nu(z',t'),\qquad {\rm (ii)}\ \dist(\partial Q_\nu(z,t),\partial Q_\nu(z',t'))>2M,\qquad {\rm (iii)}\ \dist(z',H_\nu(z))\leq\frac{t}{4}.
\end{equation*}
Then there exists a constant $c>0$ such that
\begin{equation*}
\varphi_{1,M}^\w(u_{z',\nu}^{1,0},Q_\nu(z',t'))\leq \varphi_{1,M}^\w(u_{z,\nu}^{1,0},Q_\nu(z,t))+c(|z-z'|+|t-t'|)(t')^{d-2}.
\end{equation*}
\end{lemma}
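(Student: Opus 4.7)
The idea is to extend a near-optimal competitor for the smaller cube to an admissible competitor for the larger one and show that the extra surface energy is comparable to the difference in $(d-1)$-area of the respective jump sets. Fix $\eta>0$ and choose a pair $(u,v)$ admissible for $\varphi_{1,M}^\w(u_{z,\nu}^{1,0},Q_\nu(z,t))$ with
\begin{equation*}
F_1^s(\w)(v,Q_\nu(z,t))\leq\varphi_{1,M}^\w(u_{z,\nu}^{1,0},Q_\nu(z,t))+\eta.
\end{equation*}
By \eqref{def:surf:int} this means $u\in\mathcal{S}_{1,M}^\w(u_{z,\nu}^{1,0},Q_\nu(z,t))$, $v\in\mathcal{PC}_{1,M}^\w(v_{z,\nu}^1,Q_\nu(z,t))$ with $0\leq v\leq 1$, and $F_1^b(\w)(u,v,Q_\nu(z,t))=0$.

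Define $\tilde u$ on $\mathcal{L}(\w)\cap Q_\nu(z',t')$ by $\tilde u=u$ inside $Q_\nu(z,t)$ and $\tilde u=u_{z',\nu}^{1,0}$ in $Q_\nu(z',t')\setminus Q_\nu(z,t)$. Let $B$ be the union of the lattice points $x\in\mathcal{L}(\w)\cap Q_\nu(z',t')$ admitting a neighbor $y\in\mathcal{E}(\w)(x)\cap Q_\nu(z',t')$ with $\tilde u(x)\neq\tilde u(y)$, together with the set $\{x\in\mathcal{L}(\w)\cap\partial_M Q_\nu(z',t'):|\langle x-z',\nu\rangle|\leq M\}$ dictated by the outer boundary condition. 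Set $\tilde v=0$ on $B$, $\tilde v=v$ on $\mathcal{L}(\w)\cap Q_\nu(z,t)\setminus B$, and $\tilde v=1$ elsewhere. By design $\tilde v$ vanishes at both endpoints of every edge across which $\tilde u$ jumps, so $F_1^b(\w)(\tilde u,\tilde v,Q_\nu(z',t'))=0$. The outer boundary condition $\tilde v=v_{z',\nu}^1$ on $\partial_M Q_\nu(z',t')$ is then verified case by case: the slab $|\langle x-z',\nu\rangle|\leq M$ is in $B$ by construction; for $|\langle x-z',\nu\rangle|>M$, hypothesis (ii) forces every neighbor of $x$ in $Q_\nu(z',t')$ to lie outside $Q_\nu(z,t)$ and on the same side of $H_\nu(z')$ as $x$, whence $x\notin B$ and $\tilde v(x)=1$. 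The inner boundary condition for $v$ is preserved as well, so $(\tilde u,\tilde v)$ is admissible for $\varphi_{1,M}^\w(u_{z',\nu}^{1,0},Q_\nu(z',t'))$.

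It remains to estimate the excess $F_1^s(\w)(\tilde v,Q_\nu(z',t'))-F_1^s(\w)(v,Q_\nu(z,t))$. This excess is controlled by the cardinality of the ``new'' zero-points of $\tilde v$, each of which contributes $O(1)$ to both the single-well term and the discrete-gradient term since by \eqref{neighbours} each lattice point has boundedly many incident edges. The new zero-points lie in two regions: (a) the $M$-neighborhood of $H_\nu(z')\cap(Q_\nu(z',t')\setminus Q_\nu(z,t))$, and (b) the $M$-neighborhood of the slab $\{x:\langle x-z,\nu\rangle\langle x-z',\nu\rangle<0\}$ intersected with $\partial_M Q_\nu(z,t)\cap Q_\nu(z,t)$. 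Under hypothesis (iii) the hyperplane $H_\nu(z')$ cuts $Q_\nu(z,t)$ in a full $(d-1)$-cube of area $t^{d-1}$, so region (a) has $(d-1)$-area at most $(t')^{d-1}-t^{d-1}\leq (d-1)|t-t'|(t')^{d-2}$. The same hypothesis keeps the slab in (b) at distance $\geq t/4$ from the two faces of $Q_\nu(z,t)$ perpendicular to $\nu$, so it meets only the $2(d-1)$ lateral faces and has $(d-1)$-area at most $C(|\langle z-z',\nu\rangle|+M)t^{d-2}\leq C(|z-z'|+M)(t')^{d-2}$. Summing, absorbing $M$ into the constant, taking the infimum over admissible $(u,v)$ and letting $\eta\to 0$ yields the claim.

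The main obstacle is region (b), which is genuinely nontrivial only when $H_\nu(z)\neq H_\nu(z')$. On the slab between the two parallel hyperplanes the functions $u_{z,\nu}^{1,0}$ and $u_{z',\nu}^{1,0}$ disagree, so $\tilde u$ acquires forced jumps across $\partial Q_\nu(z,t)$ at points where the original $v$ is equal to $1$; one must set $\tilde v=0$ there to preserve $F_1^b=0$, and this is precisely what produces the $|z-z'|$-term in the bound. Hypothesis (iii) is tailored to prevent this slab from reaching the top and bottom faces of $Q_\nu(z,t)$, thereby keeping the correction on the $(d-2)$-dimensional scale as required.
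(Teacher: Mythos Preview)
Your approach is exactly the paper's: extend an optimal pair from the small cube to the large one by the boundary data and set the phase field to zero wherever the extended $\tilde u$ is forced to jump, then count the new zeros. There is, however, a small gap in your case analysis. Your region (b) is explicitly intersected with $Q_\nu(z,t)$, so it catches only new zeros \emph{inside} the small cube; but a new zero $x\in Q_\nu(z',t')\setminus Q_\nu(z,t)$ can arise from an edge $(x,y)$ with $y\in Q_\nu(z,t)$ that crosses $H_\nu(z)$ (rather than $H_\nu(z')$), and such an $x$ need not lie within $M$ of $H_\nu(z')$ when $|\langle z-z',\nu\rangle|>M$, so it is not in your region (a) either. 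The fix is simply to drop the restriction ``$\cap\, Q_\nu(z,t)$'' in (b), i.e.\ to take the $M$-neighbourhood of the slab intersected with the full two-sided layer $\partial_M Q_\nu(z,t)$; this is what the paper does via its set $U_\nu(z,z')\cap\partial_M Q$. The missed points still number at most $C(|z-z'|+M)t^{d-2}$, so your final bound is unaffected.
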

\begin{proof}
To shorten notation let us set $Q=Q_\nu(z,t)$ and $Q'=Q_\nu(z',t')$. Let us choose a pair $(u,v)\in\mathcal{S}_{1,M}^\w(u_{z,\nu}^{1,0},Q)\times\mathcal{PC}_{1,M}^\w(v_{z,\nu}^1,Q)$ satisfying $F_{1}^{b}(\w)(u,v,Q)=0$ and $F_{1}^{s}(v,Q)=\varphi_{1,M}^\w(u_{z,\nu}^{1,0},Q)$. Thanks to (ii) we can extend $u$ to a function $\tilde{u}\in\mathcal{S}_{1,M}^\w(u_{z',\nu}^{1,0},Q')$ by setting $\tilde{u}(x):=u_{z',\nu}^{1,0}(x)$ on $\Lw\cap Q'\setminus Q$. We now construct a function $\tilde{v}\in\mathcal{PC}_{1,M}^\w(v_{z',\nu}^1,Q')$ satisfying $F_{1}^{b}(\w)(\tilde{u},\tilde{v},Q')=0$. To this end we introduce some notation. 
We denote by
\begin{align*}
S_\nu(z,z'):=\{x\in\R^d\colon \min\{\langle z,\nu\rangle,\langle z',\nu\rangle\}\leq\langle x,\nu\rangle\leq\max\{\langle z,\nu\rangle\langle z',\nu\rangle\}\}
\end{align*}
the stripe enclosed by the two hyperplanes $H_\nu(z)$ and $H_\nu(z')$.
Moreover, the sets
\begin{align*}
 L_\nu(z):=\{x\in\R^d\colon|\langle x-z,\nu\rangle|\leq M\},\qquad L_\nu(z'):=\{x\in\R^d\colon|\langle x-z',\nu\rangle|\leq M\}
\end{align*}
are the layers of thickness $2M$ around $H_\nu(z)$ and $H_\nu(z')$. Finally, we set 
\begin{align*}
U_\nu(z,z'):=S_\nu(z,z')\cup L_\nu(z)\cup L_\nu(z').
\end{align*}
Notice that for any pair $(x,y)\in\mathcal{E}(\w)$ with at least one point not contained in $Q$ and $\tilde{u}(x)\neq\tilde{u}(y)$ one of the following conditions is satisfied:
\begin{enumerate}[label=(\alph*)]
\setlength{\itemsep}{3pt}
\item if $x\in Q$ and $y\in Q'\setminus Q$, since $|x-y|\leq M$ we have $x,y\in U_\nu(z,z')\cap\partial_M Q$;
\item if $x,y\in Q'\setminus Q$ then $\tilde{u}(x)\neq \tilde{u}(y)$ implies that $x$ and $y$ lie on two different sides of the hyperplane $H_\nu(z')$, hence $x,y\in L_\nu(z')$.
\end{enumerate}
This motivates to define $\tilde{v}$ on $\Lw$ by setting
\begin{align*}
\tilde{v}(x):=
\begin{cases}
v(x) &\text{if $x\in Q\setminus\left(U_\nu(z,z')\cap \partial_M Q\right)$},\\
0 &\text{if $x\in\left(U_\nu(z,z')\cap \partial_M Q\right)\cup\left(L_\nu(z')\setminus Q\right)$},\\
1 &\text{otherwise,}
\end{cases}
\end{align*}
(see Figure \ref{fig:constructionvtilde}).
\begin{figure}[h]
\centering
\def\svgwidth{0.35\columnwidth}
\begingroup%
  \makeatletter%
  \providecommand\color[2][]{%
    \errmessage{(Inkscape) Color is used for the text in Inkscape, but the package 'color.sty' is not loaded}%
    \renewcommand\color[2][]{}%
  }%
  \providecommand\transparent[1]{%
    \errmessage{(Inkscape) Transparency is used (non-zero) for the text in Inkscape, but the package 'transparent.sty' is not loaded}%
    \renewcommand\transparent[1]{}%
  }%
  \providecommand\rotatebox[2]{#2}%
  \newcommand*\fsize{\dimexpr\f@size pt\relax}%
  \newcommand*\lineheight[1]{\fontsize{\fsize}{#1\fsize}\selectfont}%
  \ifx\svgwidth\undefined%
    \setlength{\unitlength}{490.74755571bp}%
    \ifx\svgscale\undefined%
      \relax%
    \else%
      \setlength{\unitlength}{\unitlength * \real{\svgscale}}%
    \fi%
  \else%
    \setlength{\unitlength}{\svgwidth}%
  \fi%
  \global\let\svgwidth\undefined%
  \global\let\svgscale\undefined%
  \makeatother%
  \begin{picture}(1,0.93211685)%
    \lineheight{1}%
    \setlength\tabcolsep{0pt}%
    \put(0,0){\includegraphics[width=\unitlength,page=1]{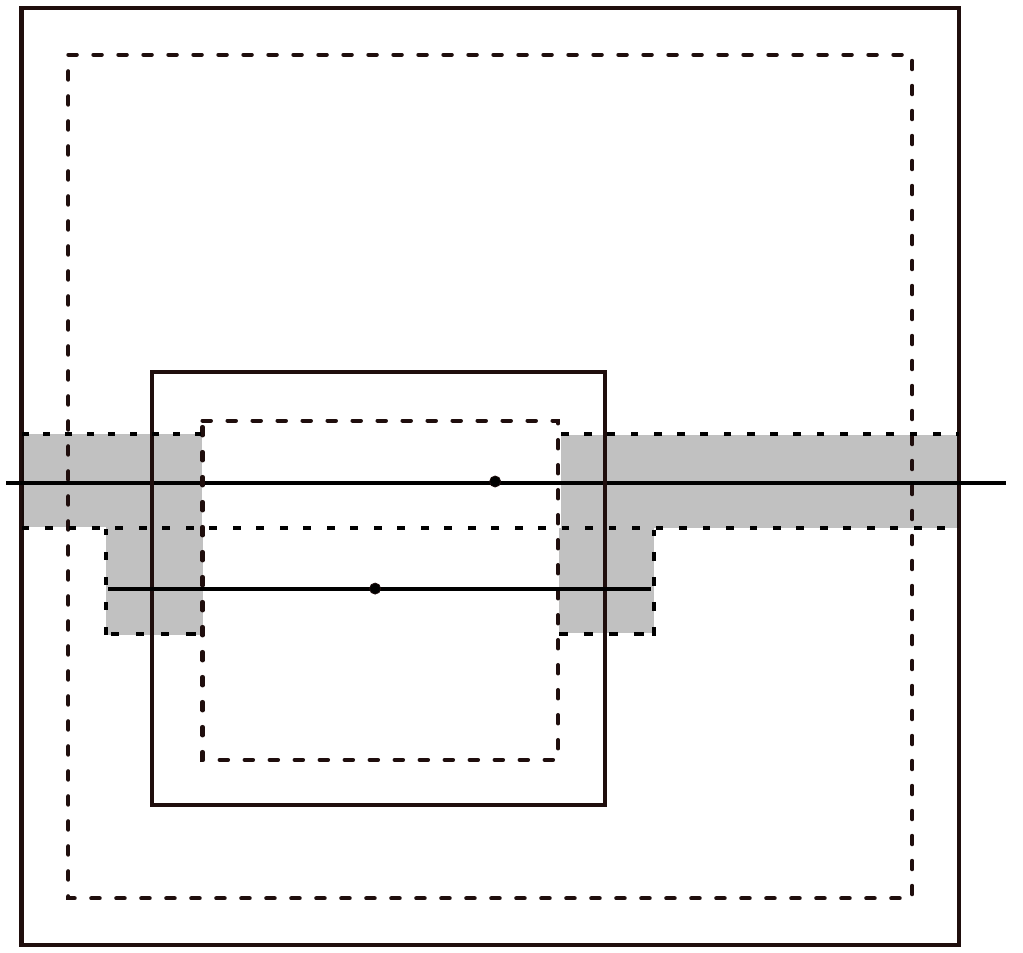}}%
    \put(0.46597716,0.42844094){\color[rgb]{0,0,0}\makebox(0,0)[lt]{\lineheight{0}\smash{\begin{tabular}[t]{l}$z'$\end{tabular}}}}%
    \put(0.34640979,0.31843896){\color[rgb]{0,0,0}\makebox(0,0)[lt]{\lineheight{0}\smash{\begin{tabular}[t]{l}$z$\end{tabular}}}}%
    \put(0,0){\includegraphics[width=\unitlength,page=2]{disegno.pdf}}%
    \put(0.54734741,0.26220818){\color[rgb]{0,0,0}\makebox(0,0)[lt]{\lineheight{0}\smash{\begin{tabular}[t]{l}{\tiny $M$}\end{tabular}}}}%
    \put(0.59469367,0.26220818){\color[rgb]{0,0,0}\makebox(0,0)[lt]{\lineheight{0}\smash{\begin{tabular}[t]{l}{\tiny $M$}\end{tabular}}}}%
    \put(0.66729128,0.32533652){\color[rgb]{0,0,0}\makebox(0,0)[lt]{\lineheight{0}\smash{\begin{tabular}[t]{l}{\tiny $M$}\end{tabular}}}}%
    \put(0,0){\includegraphics[width=\unitlength,page=3]{disegno.pdf}}%
    \put(0.99192615,0.48259272){\color[rgb]{0,0,0}\makebox(0,0)[lt]{\lineheight{0}\smash{\begin{tabular}[t]{l}$\nu$\end{tabular}}}}%
  \end{picture}%
\endgroup%

\caption{{\footnotesize The two cubes $Q_\nu(z,t)$ and $Q_\nu(z',t')$ and in gray the set $\left(U_\nu(z,z')\cap \partial_M Q\right)\cup\left(L_\nu(z')\setminus Q\right)$.}}
\label{fig:constructionvtilde}
\end{figure} 
Observe that thanks to (ii) we have $\tilde{v}\in\mathcal{PC}_{1,M}^\w(v_{z',\nu}^1,Q')$. Moreover, by construction $F_{1}^{b}(\w)(\tilde{u},\tilde{v},Q')=0$, thus $\tilde{v}$ is admissible for $\varphi_{1,M}^\w(u_{z',\nu}^{1,0},Q')$ and it remains to show that
\begin{align}\label{est:ext-final}
F_{1}^{s}(\w)(\tilde{v},Q')\leq F_{1}^{s}(\w)(v,Q)+c(|z-z'|+|t-t'|)(t')^{d-2},
\end{align}
then the result follows from the choice of the test pair $(u,v)$.

In order to prove \eqref{est:ext-final} we first notice that for any $x\in \mathcal{L}(\w)\cap Q$ by definition we have $\tilde{v}(x)\neq v(x)$ only if $x\in U_{\nu}(z,z')\cap \partial_M Q$. Similarly, for $(x,y)\in\mathcal{E}(\w)$ with $x,y\in Q$ we have $|\tilde{v}(x)-\tilde{v}(y)|\neq|v(x)-v(y)|$ only if at least one point belongs to $\left(U_\nu(z,z')\right)\cap \partial_M Q$. Thus, thanks to \eqref{neighbours} we immediately deduce
\begin{align}\label{est:ext-Q}
F_{1}^{s}(\w)(\tilde{v},Q)\leq F_{1}^{s}(\w)(v,Q)+C\#\left(\Lw\cap U_\nu(z,z')\cap \partial_M Q\right).
\end{align}
The remaining contributions can be estimated in the same way. In fact, for any $x\in\mathcal{L}(\w)\cap(Q'\setminus Q)$ we have $(\tilde{v}(x)-1)\neq 0$ only if $x\in U_\nu(z,z')\cap \partial_M Q$ or $x\in L_\nu(z')\setminus Q$. Finally, any pair $(x,y)\in\mathcal{E}(\w)$ with at least one point belonging to $Q'\setminus Q$ only gives a contribution if at least one point belongs to $U_\nu(z,z')\cap \partial_M Q$ or to $L_\nu(z')\setminus Q$. In combination with \eqref{est:ext-Q} this yields
\begin{align}\label{est:ext-Qprime}
F_{1}^{s}(\w)(\tilde{v},Q') &\leq F_{1}^{s}(\w)(v,Q)+C\#\left(\Lw\cap U_\nu(z,z')\cap \partial_M Q\right)\nonumber\\
&\hphantom{\leq F_{1}^{s}(\w)(v,Q)\;}+C\#\left(\Lw\cap L_\nu(z')\cap Q'\setminus Q \right)\nonumber\\
&\leq F_{1}^{s}(\w)(v,Q)+C\Hd\left(U_\nu(z,z')\cap\partial Q\right)+C\Hd\left(H_\nu(z')\cap Q'\setminus Q\right),
\end{align}
where to obtain the second inequality we have used Remark \ref{voronoi} and (iii). Eventually, since
\begin{align*}
 &\Hd\left(U_\nu(z,z')\cap\partial Q\right)\leq c|z-z'|t^{d-2},\\
 &\Hd\left(H_\nu(z')\cap Q'\setminus Q\right)\leq c(|z-z'|+|t-t'|)(t')^{d-2},
\end{align*}
we obtain \eqref{est:ext-final} from \eqref{est:ext-Qprime} upon noticing that by hypotheses $t<t'$.
\end{proof}
Having at hand Lemma \ref{l.extension} we now prove Proposition \ref{prop:ex:limit0}.
\begin{proof}[Proof of Proposition \ref{prop:ex:limit0}]
For definiteness we specify the orientation of the cube $Q_{\nu}$. Given $\nu\in S^{d-1}$, we choose the orthonormal basis as the columns of the orthogonal matrix $O_{\nu}$ induced by the linear mapping
\begin{equation*}
x\mapsto \begin{cases}
\displaystyle{2\frac{\langle x,\nu+e_d\rangle}{\|\nu+e_d\|^2}(\nu+e_d)-x} &\mbox{if $\nu\in S^{d-1}\setminus\{-e_d\}$,}
	\\
-x &\mbox{otherwise.}
\end{cases}
\end{equation*}The proof is divided into several steps. 

\medskip
\textbf{Step 1} Existence of $\varphi_{\rm hom}(\w,\nu)$ for rational directions $\nu\in S^{d-1}\cap\mathbb{Q}^d$.
\\
Let $\nu\in S^{d-1}\cap\mathbb{Q}^d$; then $O_\nu\in\mathbb{Q}^{d\times d}$ is such that $O_\nu e_d=\nu$ and the set $\{O_\nu e_j\colon j=1,\ldots d-1\}$ is an orthonormal basis for $H_{\nu}$. Moreover, there exists an integer $m=m(\nu)>4M$ such that $mO_\nu(z,0)\in\Z^d$ for every $z\in\Z^{d-1}$. We show that there exists a set $\Omega^\nu\subset\Omega$ of probability one such that the limit defining $\varphi_{\rm hom}(\w,\nu)$ exists for all $\w\in\Omega^\nu$. To this end, we define a suitable discrete stochastic process (depending on $\nu$) that satisfies all the conditions of Theorem \ref{t.subadergodic}. We start with some notation. For every $I=[a_1,b_1)\times\cdots\times[a_{d-1},b_{d-1})\in\mathcal{I}$ we define the set $I_d\subset\R^d$ as
\begin{align*}
I_d:=mO_\nu({\rm int}\, I\times (-s_{\rm max},s_{\rm max})),\quad\text{where}\ s_{\rm max}:=\max_{i=1,\ldots d-1}\frac{|b_i-a_i|}{2},
\end{align*}
and we define a stochastic process $\mu:\mathcal{I}\to L^1(\Omega)$ by setting
\begin{align*}
\mu(I,\w):=&\inf\Big\{F_{1}^{s}(\w)(v,I_d)\colon v\in\mathcal{PC}_{1,M}^\w(v_{0,\nu}^1,I_d):\,
\exists\, u\in \mathcal{S}_{1,M}^\w(u_{0,\nu}^{1,0},I_d),\ F_{1}^{b}(\w)(u,v,I_d)=0\Big\}
\\
&+C_\mu\Hdtwo(\partial I),
\end{align*}
where $C_\mu>0$ is a constant to be chosen later. Note that here we have chosen the same width for the boundary condition imposed on $u$ and $v$. Let us prove that $\mu(I,\cdot)\in L^1(\Omega)$. Using the measurability of $\mathcal{L}$ and $\mathcal E$ (cf. Definition \ref{defgoodedges}), one can show that for fixed $u,v\in\mathcal{PC}_{\e}^{\w}$  (interpreted as deterministic vectors $(u,v)\in\R^{\mathbb{N}}\times [0,1]^{\mathbb{N}}$) and $\lambda>0$ the function 
\begin{align*}
\mu_{\lambda,u,v}(I,\w)=&F_{1}^{s}(\w)(v, I_d)+C_{\mu}\mathcal{H}^{d-2}(\partial I)+\lambda F_{1}^{b}(\w)(u,v, I_d)
\\
&+\lambda\sum_{\substack{x\in\Lw\cap I_d\\ \dist(x,\partial I_d)\leq M}}\left(|u(x)-u_{0,\nu}^{0,1}(x)|^2+|v(x)-v_{0,\nu}^1(x)|^2\right)+\lambda\sum_{x\in\Lw\cap I_d}\dist^2(u(x),\{0,1\})
\end{align*}
is $\mathcal{F}$-measurable. Minimizing over the first $k$ components of the vectors $u$ and $v$ (while fixing the others to zero) preserves measurability and when $k\to +\infty$ we infer that $\w\mapsto \inf_{u,v}\mu_{\lambda,u,v}(I,\w)$ is measurable. Sending then $\lambda\to +\infty$ we finally conclude that also $\w\mapsto\mu(I,\w)$ is measurable as the pointwise limit of measurable functions. In order to show integrability, note that the function $v_{0,\nu}^1$ is admissible in the minimization problem defining $\mu(I,\w)$ (see also \eqref{eq:implication}) and, similar to the counting argument used to derive \eqref{eq:counting}, one can show that
\begin{equation}\label{eq:l_inftybound}
\mu(I,\w)\leq F_{1}^{s}(\w)(v_{0,\nu}^1,I_d)+C_{\mu}(\partial I)\leq C \mathcal{H}^{d-1}(I_d\cap H_{\nu})+C_{\mu}(\partial I)
\end{equation}
uniformly in $\w$, so that $\mu(I,\cdot)\in L^{\infty}(\Omega)$.

We next prove the stationarity of the process. To this end, for every $z\in\Z^{d-1}$ we set $z_m^\nu:=mO_\nu(z,0)$ and we define a measure-preserving group action $\{\tilde{\tau}_z\}_{z\in\Z^{d-1}}$ by setting $\tilde{\tau}_z:=\tau_{-z_m^\nu}$, where $\{\tau_z\}_{z\in\Z^d}$ is as in the statement. Note that for every $I\in\I$ and every $z\in\Z^{d-1}$ we have $(I-z)_d=I_d-z_{m}^\nu$. Moreover, since $z_{m}^{\nu}\in H_{\nu}\cap\Z^d$ and $\mathcal{L}$ is stationary with respect to $\{\tau_z\}_{z\in\Z^d}$, we have
\begin{equation*}
\begin{split}
v\in\mathcal{PC}_{1,M}^\w(v_{0,\nu}^1,(I-z)_d)&\iff v_z(\cdot)=v(\cdot-z^{\nu}_m)\in\mathcal{PC}_{1,M}^{\tilde{\tau}_z\w}(v_{0,\nu}^1,I_d)
\\
u\in\mathcal{S}_{1,M}^\w(u_{0,\nu}^{1,0},(I-z)_d)&\iff u_z(\cdot)=u(\cdot-z^{\nu}_m))\in\mathcal{S}_{1,M}^{\tilde{\tau}_z\w}(u_{0,\nu}^{1,0},I_d)
\end{split}
\end{equation*}
Applying once more the stationarity of $\mathcal{L}$ and the edges $\mathcal{E}$ we also obtain the identities $F_{1}^{s}(\w)(v,(I-z)_d)=F_{1}^{s}(\tilde{\tau}_z\w)(v_z,I_d)$ and $F_{1}^{b}(\w)(u,(I-z)_d)=F_{1}^{b}(\tilde{\tau}_z\w)(u_z,I_d)$, which yields $\mu(I-z,\w)=\mu(I,\tilde{\tau}_z\w)$, and hence the stationarity of the process.

Since $\mu(I,\w)\geq 0$, it remains to prove the subadditivity of the process. To this end, let $I\in\I$ and let $(I^i)_{i=1}^k$ be a finite family of pairwise disjoint $(d-1)$-dimensional intervals with $I=\bigcup_{i=1}^k I^i$. For fixed $i\in\{1,\ldots, k\}$ we choose $(u^i,v^i)\in\mathcal{S}_{1,M}^{\w}(u_{0,\nu}^{1,0},I_d^i)\times\mathcal{PC}_{1,M}^\w(v_{0,\nu}^1,I_d^i)$ such that $F_{1}^{b}(\w)(u^i,v^i,I_d^i)=0$ and
\begin{align*}
\mu(I^i,\w)=F_{1}^{s}(\w)(v^i,I_d^i)+C_\mu\Hdtwo(\partial I^i).
\end{align*}
Note that also the $d$-dimensional intervals $I_d^i$ are pairwise disjoint. This allows us to define a pair $(u,v)\in\mathcal{S}_{1,M}^\w(u_{0,\nu}^{1,0},I_d)\times\mathcal{PC}_{1,M}^\w(v_{0,\nu}^1,I_d)$ by setting 
\begin{align*}
u(x):=
\begin{cases}
u^i(x) &\text{if}\ x\in I_d^i\ \text{for some}\ 1\leq i\leq k,\\
u_{0,\nu}^{1,0}(x) &\text{otherwise},
\end{cases}
\quad v(x):=
\begin{cases}
v^i(x) &\text{if}\ x\in I_d^i\ \text{for some}\ 1\leq i\leq k,\\
v_{0,\nu}^1(x) &\text{otherwise}.
\end{cases}
\end{align*}
Since $mO_\nu({\rm int}\, I\times(-1/2,1/2))\subset{\rm int}\,\bigcup_{i=1}^k\overline{I_d^i}$ and $m>4M$, thanks to the boundary conditions satisfied by $(u,v)$ we have
\begin{align*}
F_{1}^{b}(\w)(u,v,I_d)=F_{1}^{b}(\w)(u,v,{\rm int}\,\bigcup_{i=1 }^k\overline{I_d^i}),\qquad F_{1}^{s}(\w)(v,I_d)=F_{1}^{s}(\w)(v,{\rm int}\,\bigcup_{i=1 }^k\overline{I_d^i}).
\end{align*}
Let us show that $F_{1}^{b}(\w)(u,v,{\rm int}\,\bigcup_{i=1 }^k\overline{I_d^i})=0$, so that $v$ is admissible for $\mu(I,\w)$. Since by construction $F_{1}^{b}(\w)(u,v,I_d^i)=0$ for every $i\in\{1,\ldots,k\}$, it suffices to show that for any $(x,y)\in\mathcal{E}(\w)$ with $x\in \overline{I_d^i}$ and $y\in \overline{I_d^j}$ for some $i\neq j$ we have $v(x)^2|u(x)-u(y)|^2=0$. To this end, we notice that for such a pair $(x,y)$ we have $\dist(x,\partial I_i)\leq |x-y|\leq M$, so that $u(\e x)=u_{0,\nu}^{1,0}(x)$ and $v(x)=v_{0,\nu}^1$. Similarly $u(y)=u_{0,\nu}^{1,0}(y)$ and $v(y)=v_{0,\nu}^1(y)$. In particular, $|u(x)-u(y)|\neq 0$ if and only if $x$ and $y$ lie on different sides of the hyperplane $H_{\nu}$. Since $|x-y|\leq M$ this implies $|\langle x,\nu\rangle|\leq M$, so that $v(x)=0$. We conclude that indeed $F_{1}^{b}(\w)(u,v,I_d)=0$. Moreover, by the definition of $v$ we have
\begin{align*}
F_{1}^{s}(\w)(v,{\rm int}\,\bigcup_{i=1 }^k\overline{I_d^i})\leq\sum_{i=1}^kF_{1}^{s}(\w)(v^i,I_d^i)+\frac{\beta}{2}\sum_{1\leq i\neq j\leq k}\Big(\sum_{x\in\overline{I_d^i}\cap\overline{I_d^j}}\hspace*{-0.5em}(v(x)-1)^2+\frac{1}{2}\sum_{\substack{(x,y)\in\mathcal{E}(\w)\\x\in\overline{I_d^i},y\in\overline{I_d^j}}}\hspace*{-0.5em}|v(x)-v(y)|^2\Big).
\end{align*}
Fix $i,j\in\{1,\ldots,k\}$, $i\neq j$ and let $x\in\overline{I_d^i}\cap\overline{I_d^j}$. Then $(v(x)-1)^2=(v_{0,\nu}^1(x)-1)^2\neq 0$ only if $|\langle x,\nu\rangle|\leq M$, so that
\begin{align}\label{est:dist1}
\dist(x,\overline{mO_\nu I^i}\cap\overline{mO_\nu I^j})\leq M.
\end{align}
Further, at the points $x\in\overline{I_d^i}$, $y\in\overline{I_d^j}$ such that $(x,y)\in\mathcal{E}(\w)$ $v$ satisfies the boundary conditions, so that $|v(x)-v(y)|=|v_{0,\nu}^1(x)-v_{0,\nu}^1(y)|\neq 0$ only if $|\langle x,\nu\rangle |\leq M<|\langle y,\nu\rangle|$ or $|\langle y,\nu\rangle|\leq M<|\langle x,\nu\rangle|$. Since $|x-y|\leq M$, in both cases we have $|\langle x,\nu\rangle|,|\langle y,\nu\rangle|\leq 2M$. Thus, denoting by $p_\nu$ the orthogonal projection onto the hyperplane $H_{\nu}$, we obtain $|p_\nu(x)-x|,|p_\nu(y)-y|\leq 2M$. Moreover the segment $[p_\nu(x),p_\nu(y)]$ intersects the $(d-2)$-dimensional set $\overline{mO_\nu I^i}\cap\overline{mO_\nu I^j}$ and we deduce that 
\begin{align}\label{est:dist2}
\dist(x,\overline{mO_\nu I^i}\cap\overline{mO_\nu I^j})\leq 3M,\quad \dist(y,\overline{mO_\nu I^i}\cap\overline{mO_\nu I^j})\leq 3M.
\end{align}
Gathering \eqref{est:dist1} and \eqref{est:dist2} yields the existence of a constant $C=C(R/r,M,m)>0$ such that
\begin{align}\label{est:FsId}
F_{1}^{s}(\w)(v,I_d)\leq\sum_{i=1}^k F_{1}^{s}(\w)(v^i,I_d^i)+ C\sum_{1\leq i\neq j\leq k} \Hdtwo(\overline{I^i}\cap\overline{I^j}).
\end{align}
Since $v$ is admissible for $\mu(I,\w)$, keeping in mind that
\begin{align*}
\Hdtwo(\partial I)=\sum_{i=1}^k\Hdtwo(\partial I^i)-\sum_{1\leq i\neq j\leq k}\Hdtwo(\overline{I^i}\cap\overline{I^j}),
\end{align*}
from \eqref{est:FsId} we deduce that
\begin{align*}
\mu(I,\w) &\leq F_{1}^{s}(\w)(v,I_d)+C_\mu\Hdtwo(\partial I)\leq\sum_{i=1}^k\mu(I^i,\w)+(C-C_\mu)\sum_{1\leq i\neq j\leq k}\Hdtwo(\overline{I^i}\cap\overline{I^j}),
\end{align*}
hence the subadditivity follows provided we choose $C_\mu >C$.

Since the contribution $C_{\mu}\mathcal{H}^{d-2}(\partial I)$ is of lower order with respect to the surface scaling $t^{d-1}$, applying Theorem \ref{t.subadergodic} yields the existence of a set $\Omega^\nu$ of full probability and a function $\varphi_{\rm hom}(\w,\nu)$ such that for every $\w\in\Omega^\nu$ there holds
\begin{align}\label{ex:limit:int}
\varphi_{\rm hom}(\w,\nu)=\lim_{k\to +\infty}\frac{1}{(2mk)^{d-1}}\,\varphi_{1,M}^\w\left(u_{0,\nu}^{1,0},Q_\nu(0,2mk)\right).
\end{align}
Thanks to Lemma \ref{l.extension} the passage from the integer sequence $(2mk)_{k\in\N}$ to arbitrary sequences is now straightforward. Indeed, let $t_k\to+\infty$ be arbitrary and set $t^-_{k}:=2m\lfloor t_k\rfloor$, $t^+_{k}:=2m(\lfloor t_k\rfloor+1)$. Applying Lemma \ref{l.extension} with the cubes $Q_\nu(0,t_k)$ and $Q_\nu(0,t^+_k)$ then yields
\begin{equation}\label{est:nonint1}
\varphi_{1,M}^\w(u_{0,\nu}^{1,0},Q_\nu(0,t^+_k))\leq \varphi_{1,M}^\w(0,t_k)+c(t^+_k)^{d-2}.
\end{equation}
Again applying Lemma \ref{l.extension} with cubes $Q_\nu(0,t^-_k)$ and $Q_\nu(0,t_k)$ gives
\begin{equation}\label{est:nonint2}
\varphi_{1,M}^\w(0,t_k)\leq \varphi_{1,M}^\w(u_{0,\nu}^{1,0},Q_\nu(0,t^-_k))+c(t_k)^{d-2}.
\end{equation}
Dividing by $(t_k)^{d-1}$ and gathering \eqref{ex:limit:int}, \eqref{est:nonint1} and \eqref{est:nonint2} we get
\begin{align*}
\limsup_{k\to+\infty}\frac{1}{t_k^{d-1}}\varphi_{1,M}^\w(0,t_k)\leq\varphi_{\rm hom}(\w,\nu)\leq\liminf_{k\to+\infty}\frac{1}{t_k^{d-1}}\varphi_{1,M}^\w(0,t_k).
\end{align*}
Since the sequence $(t_k)$ was arbitrarily chosen we deduce that for all $\w$ belonging to the set of full measure $\widehat{\Omega}:=\bigcap_{\nu\in S^{d-1}\cap \Q^d}\Omega^\nu$, for every $\nu\in S^{d-1}\cap\Q^d$ there exists the limit
\begin{align}\label{ex:limit:rational}
\varphi_{\rm hom}(\w,\nu)=\lim_{t\to+\infty}\frac{1}{t^{d-1}}\varphi_{1,M}^\w\left(u_{0,\nu}^{1,0},Q_\nu(0,t)\right).
\end{align}

\medskip
\noindent\textbf{Step 2} From rational to irrational directions.\\
We continue by proving that \eqref{ex:limit:rational} holds for every $\w\in\widehat{\Omega}$ and every $\nu\in S^{d-1}$. To this end, for every $\w\in\Omega$ and $\nu\in S^{d-1}$ we introduce the auxiliary functions
\begin{align*}
\overline{\varphi}(\w,\nu):=\limsup_{t\to +\infty}\frac{1}{t^{d-1}}\varphi_{1,M}^\w\left(u_{0,\nu}^{1,0},Q_\nu(0,t)\right),\qquad \underline{\varphi}(\w,\nu):=\liminf_{t\to +\infty}\frac{1}{t^{d-1}}\varphi_{1,M}^\w\left(u_{0,\nu}^{1,0},Q_\nu(0,t)\right),
\end{align*}
and we observe that for every $\w\in\widehat{\Omega}$ and $\nu\in S^{d-1}\cap\Q^d$ we have
\begin{align}\label{eq:ls-li}
\overline{\varphi}(\w,\nu)=\underline{\varphi}(\w,\nu)=\varphi_{\rm hom}(\w,\nu).
\end{align}
We now aim to extend this equality to every $\w\in\widehat{\Omega}$ and every $\nu\in S^{d-1}$ by density of $S^{d-1}\cap\Q^d$ in $S^{d-1}$.

Let $\w\in\widehat{\Omega}$ and $\nu\in S^{d-1}\setminus\mathbb{Q}^{d}$. As the inverse of the stereographic projection maps rational points to rational directions, we find a sequence $(\nu_j)\subset S^{d-1}\cap\mathbb{Q}^{d}$ converging to $\nu$. In particular, since $\nu\neq -e_d$, it follows by the continuity of $\nu\mapsto O_{\nu}$ that for fixed $\eta>0$ there exists an index $j_0=j_0(\eta)$ such that for all $j\geq j_0$ we have
\begin{itemize}
\item[(i)] $Q_{\nu_j}(0,1-\eta)\wcont Q_\nu(0,1)\wcont Q_{\nu_j}(0,1+\eta)$;
\item[(ii)] $\dist_{\mathcal{H}}\left(H_{\nu}\cap B_2,H_{\nu_j}\cap B_2\right)\leq \eta$,
\end{itemize}
where $\dist_{\mathcal{H}}$ denotes the Hausdorff distance. Using similar arguments as in the proof of Lemma \ref{l.extension} we aim to compare the two quantities $\varphi_{1,M}^\w(u_{0,\nu}^{1,0},Q_\nu(0,t))$ and $\varphi_{1,M}^\w(u_{0,\nu_j}^{1,0},Q_{\nu_j}(0,(1-\eta)t))$. To simplify notation we set
\begin{align*}
Q(t):=Q_\nu(0,t),\qquad Q_j^\eta(t):=Q_{\nu_j}(0,(1-\eta)t).
\end{align*}
For $j\geq j_0$ and $t>0$ we choose a pair $(u_j^t,v_j^t)\in\mathcal{S}_{1,M}^\w(u_{0,\nu_j}^{1,0},Q_j^\eta(t))\times \mathcal{PC}_{1,M}^\w(v_{0,\nu_j}^{1},Q_j^\eta(t))$ satisfying $F_{1}^{b}(\w)(u_j^t,v_j^t,Q_{j}^\eta(t))=0$ and $F_{1}^{s}(\w)(v_j^t,Q_{j}^\eta(t))=\varphi_{1,M}^\w(u_{0,\nu_j}^{1,0},Q_{j}^\eta(t))$. Moreover, we observe that thanks to (i) for t sufficiently large we have $\dist(Q_j^\eta(t),\partial Q(t))>2M$. This allows us to extend $u_j^t$ to a function $\tilde{u}_j^t\in\mathcal{S}_{1,M}^\w(u_{0,\nu}^{1,0},Q(t))$ by setting $\tilde{u}_j^t(x):=u_{0,\nu}^{1,0}(x)$ on $\Lw\cap Q(t)\setminus Q_j^\eta(t)$. We now construct a function $\tilde{v}_j^t\in\mathcal{PC}_{1,M}^\w((v_{0,\nu}^{1},Q(t))$ satisfying $F_{1}^{b}(\w)(\tilde{u}_j^t,\tilde{v}_j^t,Q(t))=0$ and which has almost the same energy as $v_j^t$. To this end, we consider the cone
\begin{align*}
K(\nu,\nu_j):=\{x\in\R^d\colon\langle x,\nu\rangle\langle x,\nu_j\rangle\leq 0\},
\end{align*}
and we set
\begin{align*}
L(\nu):=\{x\in\R^d\colon |\langle x,\nu\rangle|\leq M\},\qquad L(\nu_j):=\{x\in\R^d\colon |\langle x,\nu_j\rangle|\leq M\}.
\end{align*}
We denote by $U(\nu,\nu_j):=K(\nu,\nu_j)\cup L(\nu)\cup L(\nu_j)$ the union of the three sets above.
We then define $\tilde{v}_j^t$ by its values on $\Lw$ via
\begin{align*}
\tilde{v}_j^t(x):=
\begin{cases}
v_j^t(x) &\text{if $x\in Q_{j}^\eta(t)\setminus (U(\nu,\nu_j)\cap \partial_M Q_j^\eta(t))$},\\
0 &\text{if $x\in\left(U(\nu,\nu_j)\cap \partial_M Q_j^\eta(t)\right)\cup\left( L(\nu)\setminus Q_j^\eta(t)\right)$}.\\
1 &\text{otherwise},
\end{cases}
\end{align*}
(see Figure \ref{fig:rotated}).
\begin{figure}[h]
\centering
\def\svgwidth{0.35\columnwidth}
\begingroup%
  \makeatletter%
  \providecommand\color[2][]{%
    \errmessage{(Inkscape) Color is used for the text in Inkscape, but the package 'color.sty' is not loaded}%
    \renewcommand\color[2][]{}%
  }%
  \providecommand\transparent[1]{%
    \errmessage{(Inkscape) Transparency is used (non-zero) for the text in Inkscape, but the package 'transparent.sty' is not loaded}%
    \renewcommand\transparent[1]{}%
  }%
  \providecommand\rotatebox[2]{#2}%
  \newcommand*\fsize{\dimexpr\f@size pt\relax}%
  \newcommand*\lineheight[1]{\fontsize{\fsize}{#1\fsize}\selectfont}%
  \ifx\svgwidth\undefined%
    \setlength{\unitlength}{696.20945247bp}%
    \ifx\svgscale\undefined%
      \relax%
    \else%
      \setlength{\unitlength}{\unitlength * \real{\svgscale}}%
    \fi%
  \else%
    \setlength{\unitlength}{\svgwidth}%
  \fi%
  \global\let\svgwidth\undefined%
  \global\let\svgscale\undefined%
  \makeatother%
  \begin{picture}(1,0.95052576)%
    \lineheight{1}%
    \setlength\tabcolsep{0pt}%
    \put(0,0){\includegraphics[width=\unitlength,page=1]{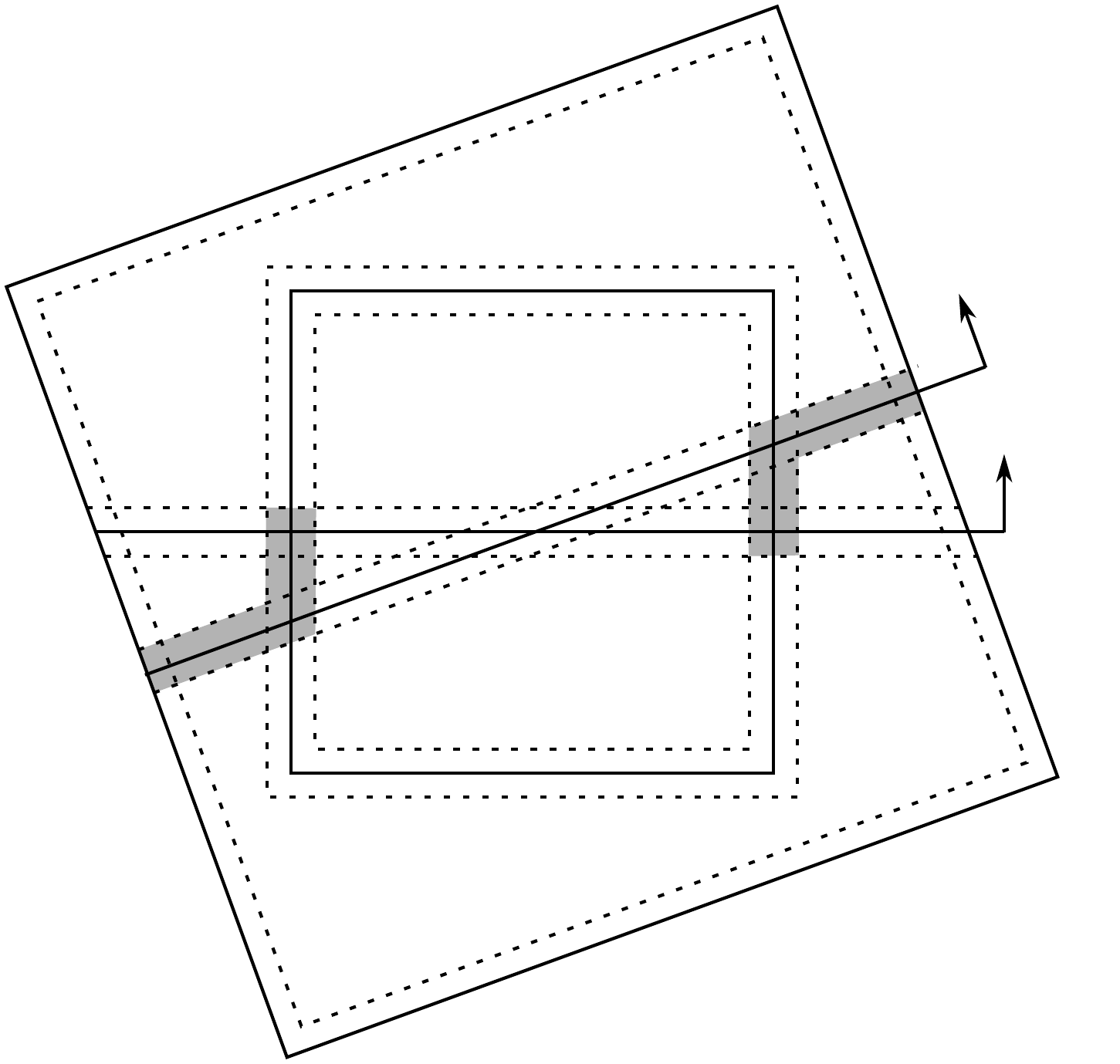}}%
    \put(0.92,0.48){\color[rgb]{0,0,0}\makebox(0,0)[lt]{\lineheight{1.25}\smash{\begin{tabular}[t]{l}$\nu_j$\end{tabular}}}}%
    \put(0.88,0.64300799){\color[rgb]{0,0,0}\makebox(0,0)[lt]{\lineheight{1.25}\smash{\begin{tabular}[t]{l}$\nu$\end{tabular}}}}%
    \put(0,0){\includegraphics[width=\unitlength,page=2]{zeichnung.pdf}}%
  \end{picture}%
\endgroup%

\caption{{\footnotesize The two cubes $Q_j^\eta(t)$ and $Q(t)$ and in gray the set $U(\nu,\nu_j)\cap\partial_M Q_j^\eta(t)\cup\left( L(\nu)\setminus Q_j^\eta(t)\right) $.}}
\label{fig:rotated}
\end{figure} 
Let us now verify that $F_{1}^{b}(\w)(\tilde{u}_j^t,\tilde{v}_j^t,Q(t))=0$. First observe that for all $x\in\Lw\cap Q_j^\eta(t)$ we have $\tilde{v}_j^t(x)\in\{0,v_j^t(x)\}$ and hence $F_{1}^b(\w)(\tilde{u}_j^t,\tilde{v}_j^t,Q_j^\eta(t))=0$ by hypotheses. Suppose now that $(x,y)\in\mathcal{E}(\w)\cap (Q(t)\times Q(t))$ with at least one point belonging to $Q(t)\setminus Q_j^\eta(t)$ and $\tilde{u}_j^t(x)\neq \tilde{u}_j^t(y)$. Then we can distinguish the following two cases:
\begin{enumerate}[label=(\alph*)]
\setlength{\itemsep}{3pt}
	\item $x\in Q_j^\eta(t)$ and $y\in Q(t)\setminus Q_j^\eta(t)$: since $|x-y|\leq M$ we have $\tilde{u}_j^t(x)=u_j^t(x)=u_{0,\nu_j}^{1,0}(x)$. Moreover, by definition it holds that $\tilde{u}_j^t(y)=u_{0,\nu}^{1,0}(y)$. In particular, $u_{0,\nu_j}^{1,0}(x)\neq u_{0,\nu}^{1,0}(y)$. The latter implies that $x,y\in U(\nu,\nu_j)$, so that $\tilde{v}_j^t(x)=\tilde{v}_j^t(y)=0$, which yields $\tilde{v}_j^t(x)^2|\tilde{u}_j^t(x)-\tilde{u}_j^t(y)|^2=0$;
	\item $x,y\in Q(t)\setminus Q_j^\eta(t)$: then necessarily $x,y\in L(\nu)$, so that $\tilde{v}_j^t(x)=\tilde{v}_j^t(y)=0$ and we conclude again.
\end{enumerate}
The above discussion shows that $\tilde{v}_j^t$ is admissible for $\varphi_{1,M}^\w(u_{0,\nu}^{1,0},Q(t))$ (note that $\tilde{v}_j^t$ also satisfies the correct boundary conditions). Moreover, the same reasoning as in Lemma \ref{l.extension} leads to the estimate
\begin{align*}
F_{1}^{s}(\w)(\tilde{v}_j^t,Q(t)) &\leq F_{1}^{s}(\w)(v_j^t,Q_j^\eta(t))+C\left(\Hd(K(\nu,\nu_j)\cap \partial Q_j^\eta(t))+\Hd(H_\nu\cap Q(t)\setminus Q_j^\eta(t)\right)\\
&\leq F_{1}^{s}(\w)(v_j^t, Q_j^\eta(t))+C\eta\,t^{d-1},
\end{align*}
where the second inequality follows thanks to (ii).
Dividing the above inequality by $t^{d-1}$ and passing to the upper limit as $t\to +\infty$, in view of the choice of $v_j^t$ we obtain
\begin{align*}
\overline{\varphi}(\w,\nu)\leq\limsup_{t\to+\infty}\frac{1}{t^{d-1}} F_{1}^{s}(\w)(\tilde{v}_j^t, Q(t))\leq \varphi_{\rm hom}(\w,\nu_j)+C\eta.
\end{align*}
Thus, letting first $j\to+\infty$ and then $\eta\to 0$ gives $\overline{\varphi}(\w,\nu)\leq\liminf_j\varphi_{\rm hom}(\w,\nu_j)$. A similar argument, now using the second inclusion in (i), leads to the inequality $\limsup_j\varphi_{\rm hom}(\w,\nu_j)\leq\underline{\varphi}(\w,\nu)$. Hence the equality \eqref{eq:ls-li} extends to all $\nu\in S^{d-1}$ and the limit in \eqref{ex:limit:rational} exists for all directions. 

\medskip
\noindent{\bf Step 3} Shift invariance in the probability space\\
Next we find a set $\widetilde{\Omega}\subset\widehat{\Omega}$ on which $\varphi(\cdot,\nu)$ is invariant under the group action $\{\tau_z\}_{z\in\Z^d}$ for every $\nu\in S^{d-1}$. Namely, we define the set
\begin{align*}
\widetilde{\Omega}:=\bigcap_{z\in\Z^d}\tau_z(\widehat{\Omega}),
\end{align*}
which has full measure since $\tau_z$ is measure preserving. Moreover, as a consequence of Definition \ref{def.groupaction} every map $\tau_z$ is bijective, so that for every $z\in\Z^d$ we have $\tau_z(\widetilde{\Omega})=\widetilde{\Omega}\subset\widehat{\Omega}$, hence the limit defining $\varphi_{\rm hom}(\tau_z\w,\nu)$ exists for every $z\in\Z^d$ and every $\nu\in S^{d-1}$. Thus, it remains to prove that $\varphi_{\rm hom}(\tau_z\w,\nu)$ and $\varphi_{\rm hom}(\w,\nu)$ coincide. To this end it suffices to show that
\begin{align}\label{est:phitauz-phi}
\varphi_{\rm hom}(\tau_z\w,\nu)\leq\varphi_{\rm hom}(\w,\nu)
\end{align}
holds for every $z\in\Z^d$, $\w\in\widetilde{\Omega}$, and $\nu\in S^{d-1}$, then the opposite inequality follows by applying \eqref{est:phitauz-phi} with $z$ replaced by $-z$ and $\w$ replaced by $\tau_z\w$.

Let $z,\w,\nu$ be as above. There exists $N=N(z)>0$ such that for all $t>0$ it holds that
\begin{align}\label{cond:Qnz}
Q_\nu(0,t)\subset Q_\nu(-z,N+t),\quad 2M<\dist(\partial Q_\nu(0,t),\partial Q_\nu(-z,N+t)).
\end{align}
An argument similar to the one used to prove the stationarity of the stochastic process shows that 
\begin{align*}
\varphi_{\rm hom}(\tau_z\w,\nu)&=\lim_{t\to+\infty}\frac{1}{(N+t)^{d-1}}\varphi_{1,M}^\w(u_{-z,\nu}^{1,0},Q_{\nu}(-z,N+t))\nonumber\\
&=\lim_{t\to+\infty}\frac{1}{t^{d-1}}\varphi_{1,M}^\w(u_{-z,\nu}^{1,0},Q_{\nu}(-z,N+t)).
\end{align*}
Moreover, in view of \eqref{cond:Qnz} for $t$ sufficiently large the cubes $Q_\nu(0,t)$ and $Q_\nu(-z,N+t)$ satisfy all the conditions of Lemma \ref{l.extension}. Hence we deduce that
\begin{align*}
\varphi_{1,M}^\w(u_{-z,\nu}^{1,0},Q_\nu(-z,N+t))\leq\varphi_{1,M}^\w(u_{0,\nu}^{1,0},Q_\nu(0,t))+c(|z|+N)(t+N)^{d-2},
\end{align*}
and we obtain \eqref{est:phitauz-phi} by dividing the above inequality by $t^{d-1}$ and passing to the limit as $t\to+\infty$.
\end{proof}
It is by now standard to show that in the limit defining $\varphi_{\rm hom}$ the cubes $Q_\nu(0,t)$ can be replaced by $Q_\nu(tx,t\varrho)$ with $x\in\R^d$, $\rho>0$ arbitrary. In fact, the following proposition can be proved by repeating the arguments in the proof of \cite[Theorem 5.8]{BCR} (see also \cite[Theorem 5.5]{ACR}) and applying Lemma \ref{l.extension} and Proposition \ref{prop:ex:limit0} above. We thus omit its proof here.
\begin{proposition}\label{prop:ex:limit:x}
Let $\mathcal{L}$ be an admissible stationary stochastic lattice with admissible stationary edges in the sense of Definitions \ref{defadmissible} \& \ref{defgoodedges}. Then there exists $\Omega'\subset\Omega$ with $\mathbb{P}(\Omega')=1$ such that for every $\w\in\Omega'$ and every $x\in D$, $\nu\in S^{d-1}$, $\varrho>0$ there holds
\begin{align}\label{ex:limit:x}
\varphi_{\rm hom}(\w,\nu)=\lim_{t\to+\infty}(t\varrho)^{1-d}\varphi_{1,M}^\w\left(u_{tx,\nu}^{1,0},Q_\nu(tx,t\rho)\right),
\end{align}
where $\varphi_{\rm hom}$ is given by Proposition \ref{prop:ex:limit0}. In particular, the limit in \eqref{ex:limit:x} exists and is independent of $x$ and $\varrho$.
\end{proposition}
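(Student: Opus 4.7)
I would follow the approach of \cite[Theorem 5.8]{BCR} (see also \cite[Theorem 5.5]{ACR}), adapting the subadditive--ergodic argument of Proposition \ref{prop:ex:limit0} from cubes centered at the origin to cubes $Q_\nu(tx,t\rho)$ centered at $tx$.

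The first reduction is to integer centers: for $x\in\R^d$, $\rho>0$ and $t$ large, I would pick $z_t\in\Z^d$ with $|tx-z_t|\leq\sqrt d$. By the stationarity of $(\mathcal{L},\mathcal{E})$ with respect to $\{\tau_z\}_{z\in\Z^d}$,
\begin{equation*}
\varphi_{1,M}^\w(u_{tx,\nu}^{1,0},Q_\nu(tx,t\rho))=\varphi_{1,M}^{\tau_{-z_t}\w}(u_{tx-z_t,\nu}^{1,0},Q_\nu(tx-z_t,t\rho)),
\end{equation*}
so that on the shifted probability space the center is uniformly bounded in $t$. Fixing then $N>2(\sqrt d+2M)$ and taking $t$ large enough, the pairs of cubes $\bigl(Q_\nu(tx-z_t,t\rho),Q_\nu(0,t\rho+N)\bigr)$ and $\bigl(Q_\nu(0,t\rho-N),Q_\nu(tx-z_t,t\rho)\bigr)$ satisfy hypotheses (i)--(iii) of Lemma \ref{l.extension}. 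Applying it in both directions sandwiches the middle quantity between $\varphi_{1,M}^{\tau_{-z_t}\w}(u_{0,\nu}^{1,0},Q_\nu(0,t\rho\pm N))$ up to an error of order $(t\rho)^{d-2}$, which becomes negligible after division by $(t\rho)^{d-1}$.

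The main obstacle is that the base event $\tau_{-z_t}\w$ depends on $t$, so Proposition \ref{prop:ex:limit0} cannot be invoked for a fixed event. To overcome this, I would repeat Step 1 of the proof of Proposition \ref{prop:ex:limit0}, setting up a modified subadditive stochastic process $\mu_x:\mathcal{I}\to L^1(\Omega)$ whose $d$-dimensional thickenings are centered near $x$ rather than at the origin. The added shift is fixed, so the new process is still stationary with respect to the appropriate $\Z^{d-1}$-action and Theorem \ref{t.subadergodic} furnishes a pointwise $\mathbb{P}$-a.s.\ limit along $I_k=[-k,k)^{d-1}$; the shift-invariance \eqref{eq:shiftinv} together with Lemma \ref{l.extension} identifies this limit with $\varphi_{\rm hom}(\w,\nu)$ on $\widetilde{\Omega}$. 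Finally, the passage from integer-side cubes to non-integer $t$, and from rational to irrational directions $\nu\in S^{d-1}$, is obtained exactly as in Steps 1 and 2 of the proof of Proposition \ref{prop:ex:limit0}, relying solely on Lemma \ref{l.extension} and the continuity argument developed there. This yields the existence of a full-probability set $\Omega'\subseteq\widetilde{\Omega}$ on which \eqref{ex:limit:x} holds for every $x\in\R^d$, $\nu\in S^{d-1}$ and $\rho>0$; independence of $x$ and $\rho$ is then automatic since the limit value coincides with $\varphi_{\rm hom}(\w,\nu)$ given by Proposition \ref{prop:ex:limit0}.
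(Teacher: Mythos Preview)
Your proposal is essentially aligned with the paper's approach: the paper omits the proof entirely and simply states that it follows by repeating the arguments of \cite[Theorem 5.8]{BCR} (and \cite[Theorem 5.5]{ACR}) together with Lemma \ref{l.extension} and Proposition \ref{prop:ex:limit0}, which is precisely the strategy you outline.

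One point deserves clarification. Your description of the modified process $\mu_x$ as having ``$d$-dimensional thickenings centered near $x$'' with ``the added shift fixed'' is imprecise: the centers in the statement are $tx$, which are unbounded in $t$, so no fixed shift of the origin-centered process captures them. The argument in \cite{ACR,BCR} instead proceeds by first restricting to rational $x$, exploiting stationarity together with the shift-invariance \eqref{eq:shiftinv} of $\varphi_{\rm hom}$ on the $\tau$-invariant set $\widetilde{\Omega}$, and then passing to arbitrary $x$ via Lemma \ref{l.extension}; the countability of rational $x$ (and rational $\nu$) is what allows one to intersect the resulting full-measure sets into a single $\Omega'$. Your sandwiching via Lemma \ref{l.extension} and the passage from integer to non-integer $t$ and from rational to irrational $\nu$ are correct and match the scheme of Proposition \ref{prop:ex:limit0}.
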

We finally prove Theorem \ref{mainthm1}. Combining \cite[Theorem 2]{ACG2} and 
\begin{proof}[Proof of Theorem \ref{mainthm1}]
Proposition \ref{prop:ex:limit0} above yields the existence of a set $\Omega'\subset\Omega$ with $\mathbb{P}(\Omega')=1$ such that for all $\w\in\Omega'$ the limit in \eqref{ex:hom:surf} exists for every $\xi\in\R^d$ and every $\nu\in S^{d-1}$ and \eqref{ex:limit:x} holds true. In addition, \cite[Theorem 2]{ACG2} proves the existence of a set of full measure (without loss of generality $\Omega'$) such that the limit in~\eqref{ex:hom:bulk} exists for every $\omega\in\Omega'$. Moreover, since $\mathcal{L}$ is an admissible stochastic lattice with admissible edges $\mathcal{E}$, for every $\w\in\Omega'$ and every $\e\to 0$ Theorem \ref{mainrep} provides us with a subsequence $\e_n$ and a functional $F(\w):L^1(D)\times L^1(D)\to[0,+\infty]$ of the form
\begin{align*}
F(\w)(u,1)=\int_D f(\w,x,\nabla u)\dx+\int_{S_u}\varphi(\w,x,\nu_u)\dHd,\quad u\in GSBV^2(D),
\end{align*}
such that $F_{\e_n}(\w)$ $\Gamma$-converges to $F(\w)$ in the strong $L^1(D)\times L^1(D)$-topology. Thanks to Proposition \ref{p.gradientpartsequal} we know that
\begin{align*}
f(\w,x_0,\xi)=f_{\rm hom}(\w,\xi)\quad\text{for a.e. $x_0\in D$ and every $\xi\in\R^d$},
\end{align*}
with $f_{\rm hom}(\w,\xi)$ given by \eqref{ex:hom:bulk}, where we have used that thanks to \cite[Theorem 2]{ACG2} $f_{\rm hom}$ does not depend on $x_0$.
Moreover, combining the asymptotic formula for $\varphi$ in Proposition \ref{limitsbvp} with Lemma \ref{approxminprob}, Lemma \ref{separationofscales1}, and a change of variables yields
\begin{align}\label{as:form-resc}
\varphi(\w,x,\nu)=\limsup_{\varrho\to 0}\lim_{\delta\to 0}\limsup_{n\to+\infty}\frac{1}{(t_n\varrho)^{d-1}}\varphi_{1,t_n\delta}^\w(u_{t_n x,\nu}^{1,0},Q_\nu(t_nx,t_n\varrho)),
\end{align}
where $t_n=\e_n^{-1}$.
Since for every fixed $\delta>0$ we have $\delta t_n>M$ for $t_n$ sufficiently large, from \eqref{as:form-resc} and Proposition \ref{prop:ex:limit:x} we immediately deduce that $\varphi(\w,x,\nu)\geq\varphi_{\rm hom}(\w,\nu)$ for every $x\in D$, $\nu\in S^{d-1}$. 

To prove the opposite inequality we fix $\varrho>0$ and $\delta\in (0,\varrho)$. Then a procedure similar to the one used in the proof of Lemma \ref{l.extension} allows to extend any pair
\begin{equation*}
(u_n,v_n)\in\mathcal{S}_{1,M}^\w(u_{t_nx,\nu}^{1,0},Q_\nu(t_n x,t_n(\varrho-\delta)))\times\mathcal{PC}_{1,M}^\w(v_{t_nx,\nu}^1,Q_\nu(t_nx,t_n(\varrho-\delta)))
\end{equation*}
to $Q_\nu(t_n x,t_n\varrho)$ in such a way that $v_n$ is admissible for $\varphi_{1,t_n\delta}^\w(u_{t_nx,\nu}^{1,0},Q_\nu(t_nx,t_n\varrho))$ and
\begin{align*}
F_{1}^{s}(\w)(v_n,Q_\nu(t_nx,t_n\varrho))\leq F_{1}^{s}(\w)(v_n,Q_\nu(t_nx,t_n(\varrho-\delta)))+Ct_n^{d-1}\delta.
\end{align*}
Passing to the infimum and dividing the above inequality by $(t_n\varrho)^{d-1}$ we obtain $\varphi(\w,x,\nu)\leq\varphi_{\rm hom}(\w,\nu)$ by letting first $n\to +\infty$ and then $\delta\to 0$. Hence the limit is determined uniquely independent of the subsequence. The claim then follows from the Urysohn-property of $\Gamma$-convergence and the fact that the ergodicity of the group action makes the functions $\varphi_{\rm hom}$ and $f_{\rm hom}$ deterministic due to \eqref{eq:shiftinv} and \cite[Theorem 2]{ACG2}, respectively. 
\end{proof}
Finally, we prove the approximation of the Mumford-Shah functional in the isotropic case.
\begin{proof}[Proof of Theorem \ref{MSapprox}]
Due Theorem \ref{mainthm1} it only remains to show that $f_{\rm hom}(\xi)=c_1|\xi|^2$ and $\varphi(\nu)=c_2$ for some constants $c_1,c_2>0$. By Theorem \ref{mainrep} the function $f$ is a non-negative quadratic form. Reasoning exactly as for the vectorial case treated in \cite[Theorem 9]{ACG2} one can show that ergodicity and isotropy imply $f(R\xi)=f(\xi)$ for all $\xi\in\mathbb{R}^{d}$ and all $R\in SO(d)$. Hence $f$ is constant on $S^{d-1}$ and has to be of the form $f(\xi)=c_1|\xi|^2$ for some $c_1>0$. 
	
We next show that $\varphi_{\rm hom}(R\nu)=\varphi_{\rm hom}(\nu)$ for all $R\in SO(d)$. Recall that $\tau'_R$ denotes a measure preserving map such that $\mathcal{L}\circ\tau'_R=R\mathcal{L}$ for all $R\in SO(d)$. Next observe that by this isotropy property of $\mathcal{L}$ we have the equivalences
\begin{align*}
u\in \mathcal{S}_{1,M}^{\w}(u^{a,0}_{0,R\nu},Q_{R\nu}(0,t))&\iff u\circ R\in \mathcal{S}_{1,M}^{\tau'_{R^T}\w}(u^{a,0}_{0,\nu},Q_{\nu}(0,t)),
\\
v\in\mathcal{PC}^{\w}_{1,M}(v_{0,R\nu}^{1},Q_{R\nu}(0,t))&\iff v\circ R\in \mathcal{PC}_{1,M}^{\tau'_{R^T}\w}(v_{0,\nu}^{1},Q_{\nu}(0,t)).
\end{align*}
Moreover, by the joint isotropy of $\mathcal{L}$ and of the edges $\mathcal{E}$, it holds that 
\begin{equation*}
F_{1}^{b}(\w)(u,v,Q_{R\nu}(0,t))=F^{b}_{1}(\tau'_{R^T}\w)(u\circ R,v\circ R,Q_{\nu}(0,t)),\;\; F_{1}^{s}(\w)(v,Q_{R\nu}(0,t))=F^{s}_{1}(\tau'_{R^T}\w)(v\circ R,Q_{\nu}(0,t)).
\end{equation*}
Hence, from definition \eqref{def:surf:int} we conclude that
\begin{equation*}
\varphi_{1,M}^{\w}(u_{0,R\nu}^{a,0},Q_{R\nu}(0,t))=\varphi_{1,M}^{\tau'_{R^T}\w}(u_{0,\nu}^{a,0},Q_{\nu}(0,t)).
\end{equation*}
Since $\varphi_{\rm hom}$ is deterministic by ergodicity, we can take expectations in the asymptotic formula given by \eqref{ex:hom:surf} and due to the fact that $\tau^{\prime}_{R}$ is measure preserving, by dominated convergence and a change of variables we obtain
\begin{align*}
\varphi_{\rm hom}(R\nu)&=\lim_{t\to +\infty}\frac{1}{t^{d-1}}\int_{\Omega}\varphi^{\w}_{1,M}(u_{0,R\nu}^{a,0},Q_{R\nu}(0,t))\,\mathrm{d}\mathbb{P}(\w)=\lim_{t\to +\infty}\frac{1}{t^{d-1}}\int_{\Omega}\varphi_{1,M}^{\tau'_{R^T}\w}(u_{0,\nu}^{a,0},Q_{\nu}(0,t))\,\mathrm{d}\mathbb{P}(\w)
\\
&=\lim_{t\to +\infty}\frac{1}{t^{d-1}}\int_{\Omega}\varphi_{1,M}^{\w'}(u_{0,\nu}^{a,0},Q_{\nu}(0,t))\,\mathrm{d}\mathbb{P}(\w')=\varphi_{\rm hom}(\nu).
\end{align*}
We finish the proof setting $c_2=\varphi_{\rm hom}(e_1)>0$.
\end{proof}
\section{Numerical results}\label{sec:numerics}
We complement the theoretical results proved in the previous sections with two numerical examples that illustrate the isotropic behavior of the random discretization considered in this paper. 

We start describing how to create the random lattice. The construction of the random lattice is based on the {\it random parking model} with parameter $r>0$, that we briefly describe below. On a fixed bounded domain $D$ one constructs a point set as follows  
\begin{itemize}
	\item [(1)] Choose a point $x_1\in D$ according to a uniform distribution.
	\item [(2)] For $i\geq 2$ choose the $i^{\rm th}$ point $x_i\in D$ according to a uniform distribution and accept it if $|x_i-x_j|\geq r$ for all $j<i$.
\end{itemize}
One obtains the so-called {\it jamming limit} repeating this process ad infinitum. When the domain $D$ invades the whole space in a suitable sense (for instance, take the sequence $D_n=(-n,n)^d$), then it was proven in \cite[Theorem 2.2]{Pe} that the corresponding jamming limits converge weakly (in the sense of measures) to a point process on the whole space $\R^d$, namely the {\it random parking process} in $\R^d$. This limit point process together with the associated Voronoi edges satisfy all the assumptions of Theorem \ref{MSapprox}. In practice, however one has to work with the finite approximation described in (1)-(2). Nevertheless, using similar arguments as in \cite[Lemma 2.5]{glpe} one can prove that the $\Gamma$-limit remains the same provided the random parking model is constructed on a sequence of rescaled Lipschitz set $D'/\e$ with $D\subset\subset D'$ via the graphical construction as described in \cite[Section 2.1]{glpe}. 

For standard images we create the random parking process in a finite rectangular box $Q=[0,X]\times [0,Y]$ (usually $X$ and $Y$ are the pixel dimensions of the original image).
We notice that the distance test (2) does not need to be done for all points. With an auxiliary list we reduce this test to a uniformly bounded number of points in each iteration step. In this way one can add points until condition (i) in Definition \ref{defadmissible} is satisfied inside $Q$ with a sufficiently small $R$. Even though in the (theoretical) jamming limit one can ensure that $R\leq 2r$, for our purposes, with $r=0.7$ in pixel units, it suffices to ensure that $R\leq 4r$. Instead of checking this condition, a more efficient stopping criterion for creating the random lattice is to stop the iteration process after a certain number ($300$ in the following examples) of unsuccessful iterations (cf. Figure \ref{Fig_latt} for an example).
Finally we mention that the stochastic lattice has to be created just once and can be saved for future usage. 
\begin{figure}[h]
	\includegraphics[trim={0 0 3.5cm 1.7cm},clip,scale=0.35]{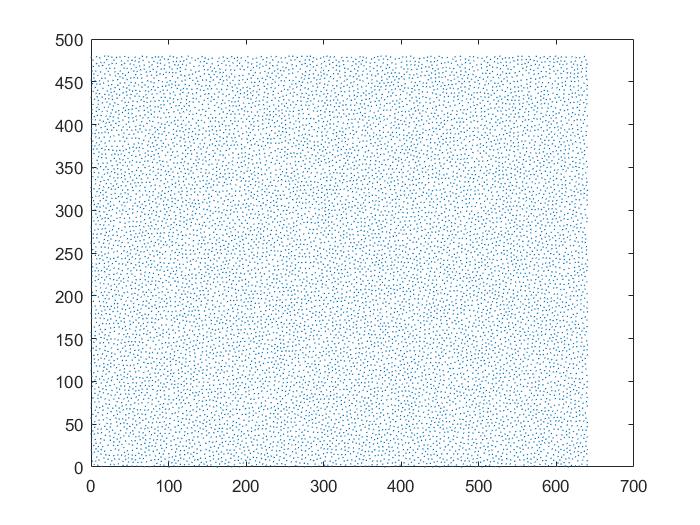}
	\caption{A realization of a stochastic lattice with $r=4$ on $Q=[0,640]\times[0,480]$.}\label{Fig_latt}
\end{figure}
After the lattice has been created one has to compute the Delaunay triangulation in order to obtain the Voronoi neighbors. In this step we also delete long edges close to the boundary of $Q$. 

In what follows, we compare the number of points and interactions of a discretization of a ($640\times 480$)-image with respect $\Z^2$ and a realization of $\Lw$ with $r=0.7$ which was the parameter we used in our examples. Clearly, with $\Z^2$ one has $640\times 480=307.200$ points and the number of interactions per point equals $4$ (except some points at the boundary that we neglect). For the random lattice, the average of $10$ realizations yields $320.630$ points with a maximal deviation of $(-10.893,+5.643)$. The average number of interactions per point equals $6$ up to boundary corrections of order $10^{-4}$. In Figure \ref{Fig_NN} we display a typical distribution of the number of interactions per point. One can conclude that the average number of points does not change significantly when one uses random lattices instead of $\Z^2$, while the number of interactions increases by a factor of $3/2$.
\begin{figure}[h]
	\includegraphics[trim={0 0 2.0cm 0.8cm},clip,scale=0.35]{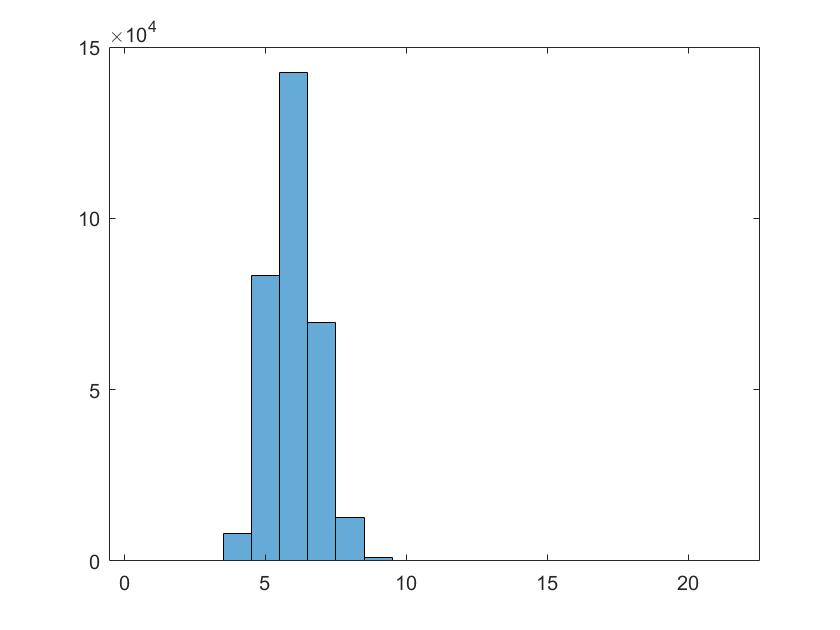}
	\caption{The distribution of the number of interactions per particle of a typical realization on $Q=[0,640]\times[0,480]$ with $r=0.7$.}\label{Fig_NN}
\end{figure}

Having created the lattice together with the edges one has to define a discrete version of the original image on the stochastic grid to construct the discrete fidelity term in \eqref{eq:fidelityterm}. To this end, at every point $x_i$ we define $g_{\e}(x_i)$ to be the value at the pixel obtained by taking componentwise the integer part of the coordinates of $x_i$ (of course other choices are possible).

After this preparation we apply the well-known method of alternate minimization for the Ambrosio-Tortorelli functionals. For this method, given a starting guess $u_0$ one minimizes the discrete functional with respect to $v$ and finds a first candidate $v_0$. Then, for fixed $v_0$ one minimizes with respect to $u$ and finds a candidate $u_1$. Note that each minimization requires to solve a linear equation. In the examples presented below we repeat this procedure until for two iterative solutions $u_k$, $u_{k+1}$ it holds $\|u_k-u_{k+1}\|/\|u_k\|<10^{-5}$.

In what follows we apply the procedure described above to simple but meaningful test images that help to illustrate the anisotropic behavior of the functionals in \eqref{eq:introBBZ} obtained by discretizing $AT_\e$ on a periodic lattice in contrast to the isotropic behavior of the discretization on a stochastic lattice in \eqref{eq:defF}.
In fact, we present two examples showing that the discretization on a square lattice prefers jump sets whose normal has a small supremum norm. Notice that this is also consistent with the results in \cite[Theorem 6.1 and Proposition 7.1]{BBZ}.
The first example (Figure \ref{rotsquares}) is the reconstruction of differently oriented squares. The tuning parameters are chosen as $\beta=29.85$ and $\gamma=5000$ in the periodic case, while in the stochastic case $\beta=25$. We notice that the lower constant $\beta$ in the stochastic setting makes the weight of the surface term per lattice cell comparable in both models. In fact it takes into account that the term $(v-1)^2$ has the same weight in both models, while the proportion between the gradient term in the square lattice and the gradient term in the stochastic lattice is $2/3$. The latter corresponds to the proportion between the number of interactions in the square lattice and the average number of interactions in the stochastic one.
\begin{figure}[h]
 \includegraphics[width=\textwidth]{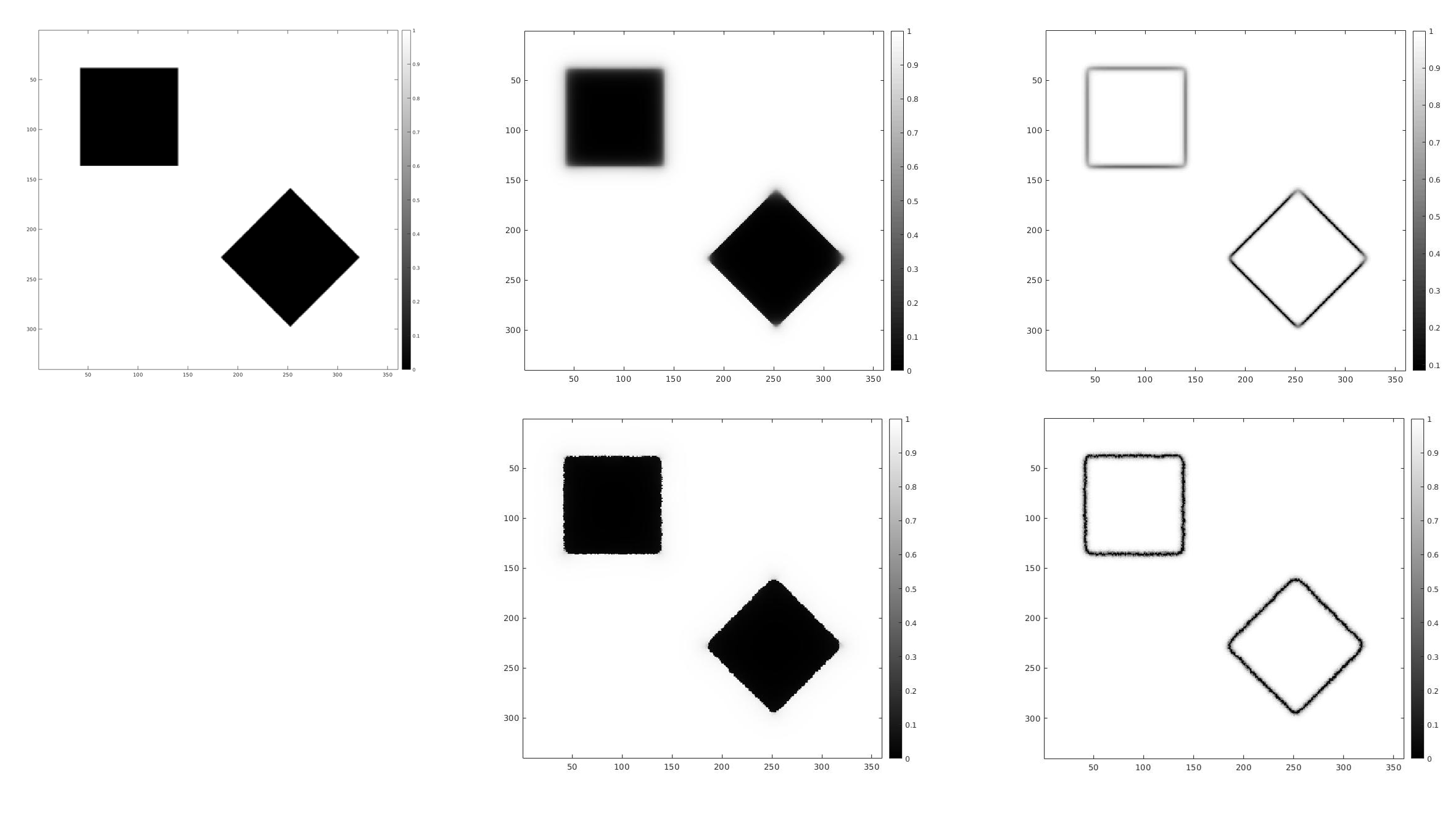}
 	\caption{Above (from left to right): original image; reconstructed image using the discretization on a square lattice, corresponding edge variable; bottom (from left to right): reconstructed image using random discretization, corresponding edge variable}\label{rotsquares}
\end{figure}

The second example (Figure \ref{circle}) shows the reconstruction of a circle. For the periodic  functionals the tuning parameters are chosen as $\beta=28$, $\gamma=4500$, while for the stochastic ones we chose $\beta=23$. We display in each case the reconstructed image together with the corresponding edge variable and a binary plot of the sublevelset $\{v\leq 0.2\}$, the latter one making the anisotropic behavior more evident.
\begin{figure}[h]
\includegraphics[width=\textwidth]{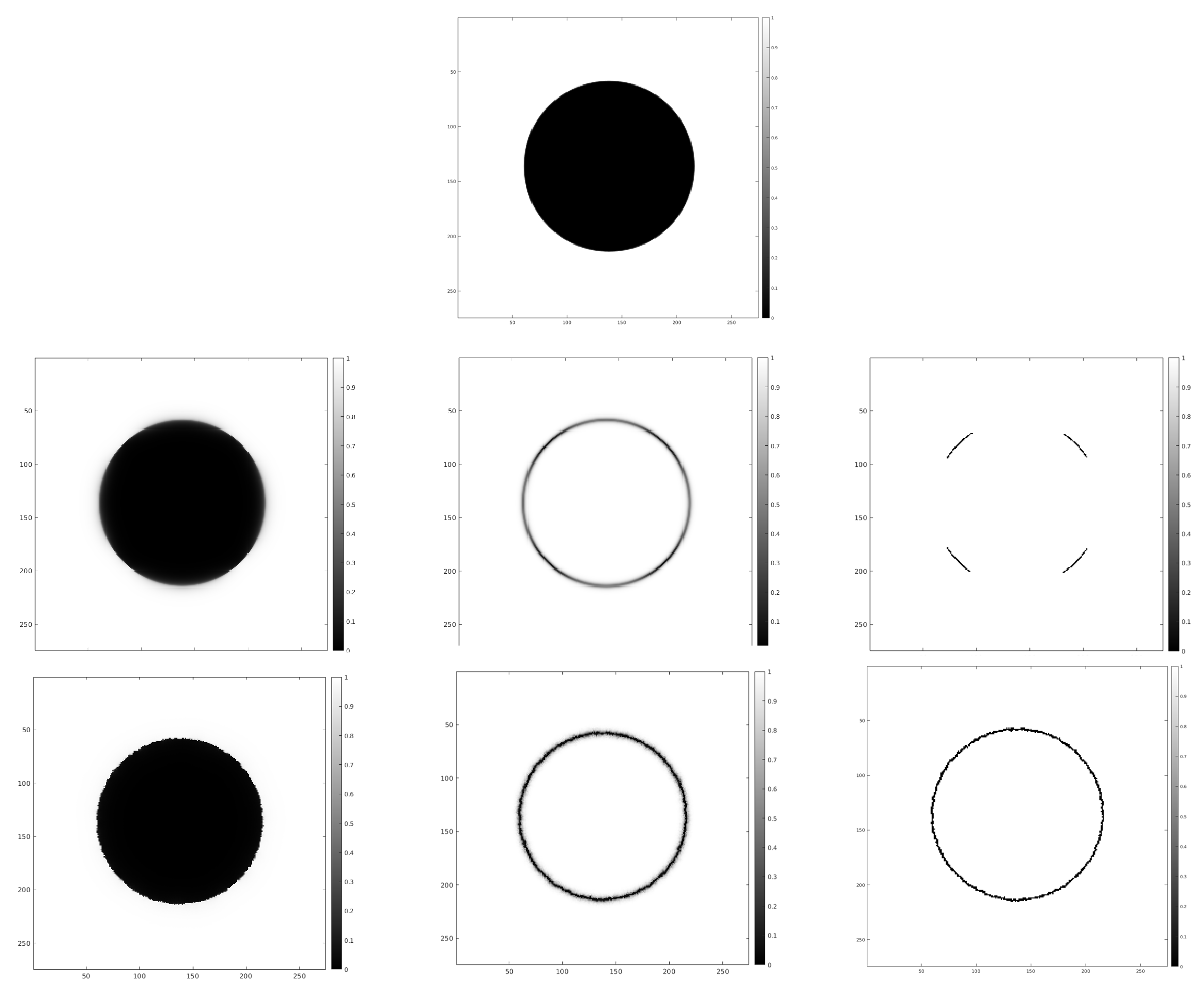}
 	\caption{Above: original image; middle (from left to right): reconstructed image using the discretization on a square lattice, corresponding edge variable, binary plot of the edge variable with threshold $0.2$; bottom (from left to right): reconstructed image using random discretization, corresponding edge variable, binary plot of the edge variable with threshold $0.2$}\label{circle}
\end{figure}
\section*{}
\subsection*{Acknowledgment}
\noindent The work of A.B. and M.C. was supported by the DFG Collaborative Research Center TRR 109, ``Discretization in Geometry and Dynamics''. M.R. acknowledges financial support from the European Research Council under the European Community's Seventh Framework Program (FP7/2014-2019 Grant Agreement QUANTHOM 335410).

\subsection*{Conflict of interest}
\noindent The authors declare that they have no conflict of interest.

\end{document}